\newtheorem{thm}{Theorem}[section]
\newtheorem{cor}[thm]{Corollary}
\newtheorem{lemma}[thm]{Lemma}
\theoremstyle{definition}
\theoremstyle{remark}
\newtheorem{remark}[thm]{Remark}
\newtheorem{notation}[thm]{Notation}
\newcommand{\abs}[1]{\lvert#1\rvert}
\newcommand{\R}{\mathbb R}
\newcommand{\C}{\mathbb C}
\renewcommand{\Im}{\operatorname{Im}}
\newlength{\intwidth}
\DeclareRobustCommand{\fpint}[2]
   {\mathop{%
      \text{%
              \settowidth{\intwidth}{$\int$}%
              \makebox[0pt][l]{\makebox[\intwidth]{$-$}}%
              $\int_{#1}^{#2}$
           }
           }
   }
\title[Asymptotic representation of minimal polynomials]
{Asymptotic representation of minimal polynomials on several intervals}
\author{F. Peherstorfer}
\begin{document}

\begin{abstract}
Asymptotic representation of minimal polynomials on several intervals is given. The 
last modifications and corrections of this manuscript were done by the author in the 
two months preceding his passing away in November 2009. The manuscript remained unsubmitted 
and is not published elsewhere$^1$. \thanks{$^1$Submitted by P. Yuditskii and I. Moale}
\end{abstract}

\maketitle

\section{Introduction}

Let $E = \bigcup_{k=1}^{l} E_k,$ where $E_k = [a_{2k-1},a_{2k}]$, $a_1<a_2<
\ldots < a_{2l}$, be a system of intervals and let $W \in C(E)$ be a positive weight
function on $E.$ It is a classical problem to find the minimal polynomial on $E$
with respect to a given norm and weight function $W$, that is in case of the
maximum norm, to find the unique monic polynomial $\hat{M}_n(x;W):=\hat{M}_n(x)
= x^n + \ldots $ such that

\[ \begin{split}|| \frac{\hat{M}_n(.;W)}{W}||_\infty:= &\max _{x\in
E}\abs{\frac{\hat{M}_n(x;W)}{W(x)}}\\
= &\min _{a_i} \max _{x\in E} \left|\frac{x^n + a_{n-1}x^{n-1}+ \ldots +
a_0}{W(x)}\right|, \end{split}\] and respectively, to find the normalized minimal
polynomial

\begin{equation}
	M_n(x;W) := \hat{M}_n(x;W)/ E_{n,\infty}(W),
	\label{eq-e1}
\end{equation}
where \[ E_{n,\infty}(w):= || \hat{M}_n(.;W)/W||_\infty \] denotes the minimum
deviation. In the case of a single interval, say $[-1,1]$, and weight function
$W(x) \equiv 1$, it is well known that the Chebyshev polynomial

\begin{equation}
	 2T_n(z) = (\phi(z,\infty,[-1,1]))^n + (\phi(z,\infty,[-1,1]))^{-n}
\label{eq-c1}
\end{equation}
is the normalized minimal polynomial, where
\begin{equation}
\phi(z,\infty,[-1,1]) = z+\sqrt{z^2-1}
\label{eq-c2}
\end{equation}
is the complex Green's function of $\bar{\mathbb C} \backslash [-1,1]$
with pole at infinity, while, even for the single interval case, asymptotic representations with respect
to a weight function have been proved only recently \cite{KroPeh}.

For several intervals not much is known with respect to the $L_\infty$-norm (in
contrast to the $L_2$-norm, that is, for orthogonal polynomials, whose asymptotic
behavior is well understood nowadays thanks to the papers \cite{ref6, Apt, ref42,
Pehzero, Tom, Wid}, see also the forthcoming book \cite{Sim}). Indeed in the thirties
of the last century for two intervals Achieser \cite{Achell} derived an asymptotic
representation of the minimum deviation $E_{n,\infty}(W)$ with the help of
elliptic functions and in the late sixties of the last century Widom \cite{Wid}
found an asymptotic representation of the minimum deviation for several
intervals. But (up to the very special case that $E$ is an inverse image of
$[-1,1]$ under a polynomial mapping, see \cite{Pehdef}) the main points of interest
the explicit or asymptotic representations of the minimal polynomials remained
open, though these open problems have been pointed out in \cite[p. 128, 205]{Wid}.

To state our main results we need some notations. By $\phi(z,z_0)$ we
denote a so-called complex Green's function for $\overline{\mathbb C}
\backslash E_l$ uniquely determined up to a multiplication constant of
absolute value one (chosen conveniently below), that is, $\phi(z,z_0)$
is a multiple valued function which is analytic on $\overline{\mathbb C}
\backslash E$ up to a simple pole at $z = z_0,$ has no zeros on
$\overline{\mathbb C} \backslash E$ and satisfies $|\phi(z,z_0)| \to 1$
for $z \to x \in E$ quasi-everywhere; or in other words $\log |\phi(z,z_0)|$
is the Green's function with pole at $z = z_0 \in \overline{\mathbb C}
\backslash E$, as usual denoted by $g(z,z_0).$ In the case under consideration,
as it is known \cite[§14]{Wid}, a complex Green's function may be represented as
\begin{equation}\label{eq-x1}
    \phi(z,\infty) = \exp \left( \int_{a_{2l}}^{z} r_{\infty}(x) \frac{dx}{\sqrt{H(x)}} \right)
\end{equation}
where
\begin{equation}
    H(x) = \prod\limits_{j=1}^{2l} (x - a_j)
\end{equation}
and $r_{\infty}(x) = x^{l-1} + ...$ is the unique
polynomial such that
\begin{equation}\label{eq-x2}
   \int_{a_{2j}}^{a_{2j+1}} r_{\infty}(x) \frac{dx}{\sqrt{H(x)}} = 0 {\rm \ for \ } j = 0, ..., l-1
\end{equation}
and that for $x \in {\mathbb C} \backslash E$
\begin{equation}\label{eq-x3}
  \phi(z,x_0) = \exp \left( \int_{a_{2l}}^{z} \frac{r_{x_0}(x)}{x - x_0} \frac{dx}{\sqrt{H(x)}} \right)
\end{equation}
where $r_{x_0} \in {\mathbb P}_{l-1}$ is such that $r_{x_0}(x_0) = - \sqrt{H(x_0)}$ and
$$
   \fpint{a_{2j}}{a_{2j+1}} \frac{r_{x_0}(x)}{x - x_0}
   \frac{dx}{\sqrt{H(x)}} = 0 {\rm \ for \ } j = 1, ... , l-1.
$$
Recall that the so-called capacity of $E$ is given by
\begin{equation}\label{cap}
    cap (E) = \lim\limits_{z \to \infty} |\frac{z}{\phi(z,\infty,E)}|.
\end{equation}
By $\omega(z,B;\bar{\mathbb C} \backslash E)$ we denote the harmonic measure of
$B \subseteq E$ with respect to $\bar{\mathbb C} \backslash E$ at $z,$ which is that
harmonic and bounded function on $\overline{\mathbb C} \backslash E_l$ which satisfies
for $\xi \in E_l$ that $\lim\limits_{z \to \xi} \omega(z, B, \overline{\mathbb C}
\backslash E_l) = i_B(\xi),$ where $i_B$ denotes the characteristic function of $B.$
For abbreviation we put $$ \omega(z,E_k;\bar{\mathbb C} \backslash E) = \omega_k(z). $$

Furthermore let us recall that for a given weight function $W$ which is Lipschitz
continuous on $E,$ there is a unique multi-valued analytic function ${\mathcal W}(z)$ with
${\mathcal W}(\infty) > 0$ which has no zeros or poles in $\bar{\mathbb C} \backslash E$
and is such that the limiting values ${\mathcal W}^{\pm}(x) = \lim\limits_{\substack{z \to x \\
\pm {\rm Im \ } z > 0}} {\mathcal W}(z)$ from the upper and lower
half-plane are Lipschitz-continuous on $E$ with the property that
\begin{equation}\label{eq-x4}
    \sqrt{{\mathcal W}^{+}(x) {\mathcal W}^{-}(x)} = W(x) {\rm \ for \ } x \in E.
\end{equation}
Note that
$$ {\mathcal W}(\infty) = \exp \left\{ \frac{1}{2 \pi} \int_E \log W(x) \frac{\partial g(\xi;\infty)}
{\partial n_\xi^{+}} |d\xi| \right\}.$$

\begin{notation} For given $E$ and weight function $W$ put for $k = 1,...,l-1$
\begin{equation}\label{insert_eq}
     \gamma_{k,n}(W) := n \omega_k(\infty) +
     \frac{1}{2 \pi} \int_E \log W(x) \frac{\partial \omega_k(\xi)}{\partial n_\xi^{+}} |d\xi| +
     \sigma_{k,n}
\end{equation}
where $\sigma_{k,n} \in \{ 0,1 \}$ is such that $\gamma_{k,n}(W) \in [0,1]$ modulo $2.$ Note
that $\sigma_{k,n}$ is uniquely determined.

For abbreviation set ${\boldsymbol \omega}(\infty) = (\omega_1(\infty),...,\omega_{l - 1}(\infty))$
and ${\boldsymbol \gamma}_n(W) = $ $(\gamma_{1,n}(W),$ $...,\gamma_{l-1,n}(W)),$ i.e.,
${\boldsymbol \gamma}_n(W) \in [0,1]^{l - 1}$ modulo $2.$
\end{notation}

\begin{thm}\label{thm1}
Let $W \in C^{1+\alpha}(E),$ $\alpha > 0,$ be positive on $E.$ Let $c_{j,n} \in [a_{2j},a_{2j+1}],$
$j=1,...,l-1,$ be the unique points such that
\begin{equation}\label{F1}
    \sum\limits_{j=1}^{l-1} \omega_{k}(c_{j,n}) = \gamma_{k,n}(W) {\rm \ modulo \ } 2 {\rm \ for \ } k = 1,...,l-1,
\end{equation}
and suppose that $\lim\limits_{n \in \mathbb M} c_{j,n} = c_j$ with $c_j \in (a_{2j},a_{2j+1})$
for $j=1,...,l-1,$ where $\mathbb M$ is an infinite subset of $\mathbb N.$ Then the normalized minimal
polynomial $M_n(x;W)$ has for $n \in \mathbb M$ the following uniform asymptotic representation on $E:$
\begin{equation}\label{F2}
   2 M_n(x;W) = \psi_n^{+}(x) +  \psi_n^{-}(x) + o(1),
\end{equation}
where
\begin{equation}\label{F3}
    \psi_n(z) = \frac{\phi(z;\infty)^n}{\prod\limits_{j=1}^{l-1} \phi(z;c_j)} {\mathcal W}(z)
\end{equation}
and the constant in the $o()$ term is independent of $n$ and $x,$ $x \in E.$

Furthermore on any compact subset $K$ of $\bar{\mathbb C} \backslash E$
\begin{equation}\label{F4}
    \frac{2 M_n(z;W)}{\phi(z;\infty)^n} = \frac{{\mathcal W}(z)}{\prod\limits_{j=1}^{l-1} \phi(z;c_j)} + o(1)
\end{equation}
uniformly on $K.$
\end{thm}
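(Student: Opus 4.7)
The plan is to exhibit an explicit polynomial $P_n(z)$ of degree $n$ whose restriction to $E$ equals $\tfrac{1}{2}(\psi_n^+(x)+\psi_n^-(x)),$ to show that $P_n$ nearly equioscillates at $n+l$ points of $E,$ and then to deduce $M_n = P_n + o(1)$ via the alternation characterization of the Chebyshev extremal polynomial. As a first step, I would verify that the formula for $\psi_n$ makes sense as a single-valued function on $\bar{\mathbb C}\setminus E.$ Continuing $\psi_n$ along a loop around the band $E_k,$ $\phi(z,\infty)^n$ acquires the multiplicative period $e^{2\pi i n\omega_k(\infty)},$ each factor $\phi(z,c_{j,n})^{-1}$ acquires $e^{-2\pi i\omega_k(c_{j,n})},$ and $\mathcal W(z)$ acquires $\exp\bigl(i\int_E\log W\,(\partial\omega_k/\partial n_\xi^+)\,|d\xi|\bigr).$ Adding exponents, the defining relations for $c_{j,n}$ (modulo $2$) say exactly that the total monodromy is $\pm 1,$ the benign $\sigma_{k,n}$-sign ambiguity being absorbed when one forms the symmetric combination $\psi_n^+ + \psi_n^-.$ Hence $\psi_n$ descends to a single-valued analytic function on $\bar{\mathbb C}\setminus E.$

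\emph{Extension to a polynomial.} On $E$ we have $|\phi(x,\infty)|=|\phi(x,c_{j,n})|=1$ and $\mathcal W^+\mathcal W^-=W^2,$ so $\psi_n^-(x)=\overline{\psi_n^+(x)}$ and $|\psi_n^\pm(x)|=W(x);$ in particular $P_n(x):=\tfrac{1}{2}(\psi_n^+(x)+\psi_n^-(x))$ is real and continuous on $E.$ I would then extend $\psi_n$ to the second sheet of the hyperelliptic surface of $\sqrt{H}$ and form the symmetric combination; the boundary values from above and below cancel, giving an entire function. Since $\psi_n(z)\sim \mathcal W(\infty)z^n/(\mathrm{cap}(E)^n\prod_j\phi(\infty,c_{j,n}))$ while the second-sheet branch decays at infinity, the result $P_n$ is a polynomial of degree exactly $n$ with leading coefficient $\lambda_n = \mathcal W(\infty)/(2\,\mathrm{cap}(E)^n\prod_j\phi(\infty,c_{j,n})).$

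\emph{Alternation and asymptotic agreement.} Writing $\psi_n^+(x)=W(x)e^{i\theta_n(x)}$ gives $P_n(x)/W(x)=\cos\theta_n(x),$ so $\|P_n/W\|_{C(E)}\le 1.$ The derivative $d\theta_n/dx,$ computable from the integral representations of $\phi(\cdot,\infty),\ \phi(\cdot,c_j)$ and $\mathcal W,$ is strictly positive and integrates to $n\pi\omega_k(\infty)+O(1)$ over each band $E_k,$ producing exactly $n+l$ alternation points of $\cos\theta_n$ on $E$ with the correct sign pattern (each gap crossing forces a sign flip via the branch structure of the $\phi(\cdot,c_j)$'s). The monic rescaling $P_n/\lambda_n$ is thus an admissible competitor, giving $\lambda_n\le 1/E_{n,\infty}(W),$ while de la Vall\'ee Poussin applied to the equioscillation yields $1/E_{n,\infty}(W)\le\lambda_n(1+o(1)).$ Combined with strict convexity of the sup-norm on monic polynomials of fixed degree, this forces $\|M_n-P_n\|_{C(E)}=o(1),$ which is (F2). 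The exterior bound (F4) follows by dividing by $\phi(z,\infty)^n$ and applying the maximum principle to $2M_n/\phi^n - \mathcal W/\prod_j\phi(\cdot,c_j)$ on $\bar{\mathbb C}\setminus E.$

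\emph{Main obstacle.} The hardest step is converting near-extremality of $P_n$—namely $\lambda_n E_{n,\infty}(W)\to 1$ together with a Chebyshev-type alternation set of size $n+l$—into a uniform $o(1)$ bound on $M_n-P_n.$ This requires a quantitative stability result for the Chebyshev problem on several intervals, and is precisely where the hypotheses enter: $c_{j,n}\to c_j\in(a_{2j},a_{2j+1})$ keeps $\phi(\cdot,c_{j,n})$ from degenerating at a band edge, while $W\in C^{1+\alpha}$ ensures $\mathcal W$ is H\"older up to $E,$ both being indispensable for the uniform alternation count and for the uniform rate.
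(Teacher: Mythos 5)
There is a genuine gap, and it sits exactly at the point your outline treats as routine: the symmetrized function is not a polynomial. Write $\tilde\psi_n$ for the continuation of $\psi_n$ to the second sheet of $y^2=H$; then $F=\tfrac12(\psi_n+\tilde\psi_n)$ is indeed analytic across $E$ (the boundary values match), but $\tilde\psi_n$ has a simple pole at each $c_{j,n}$, because $\psi_n$ has a simple zero there coming from the factor $1/\phi(z;c_{j,n})$. Hence $F$ is a rational function with simple poles in the gaps whose residues are generically nonzero, and your claims that one gets ``an entire function'' and that ``$P_n$ is a polynomial of degree exactly $n$'' are false. The paper confronts this head-on: it writes $P_{n+l-1}/g_{(n)}=\tilde M_n+\sum_{j}\lambda_{j,n}/(z-c_{j,n})$ and proves $\lambda_{j,n}=O(q^n)$, $q<1$, using $|\phi(\cdot;\infty)|>1$ away from $E$ together with the hypothesis $c_j\in(a_{2j},a_{2j+1})$; the object you construct is only \emph{asymptotically} a polynomial. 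Once this is accepted, the second gap --- the one you yourself label ``the main obstacle'' --- becomes unavoidable: you must convert near-extremality of a non-polynomial equioscillant into $\|M_n-P_n\|_{C(E)}=o(1)$. ``Strict convexity of the sup-norm on monic polynomials'' does not deliver this (the sup-norm is not strictly convex; uniqueness of $M_n$ comes from the Haar/alternation property). The paper's actual mechanism is the strong unicity constant $\gamma(R_n;G_n)$ combined with the Lipschitz continuity $\|p^{*}(f)-p^{*}(h)\|\le 2\gamma(f)\|f-h\|$ of the best-approximation operator, and the entire second half of the proof of Theorem \ref{thm1.6} is devoted to proving $\gamma(R_n)=O(n)$ via Lagrange fundamental polynomials, Markov's inequality, and lower bounds on $|(H\tilde Q)'|$ at the alternation points, so that $O(n)\cdot O(q^n)\to 0$.

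A further structural problem is that you attack general $W$ directly. For non-rational $W$ the function $\mathcal W$ is transcendental, the symmetrized object is not algebraic, and the divisor calculus that justifies the alternation count (Abel's theorem producing a rational function $\mathcal R_n$ on the surface with prescribed zeros and poles, from which the zero counts $\sharp Z(P_{n+l-1};E_j)$ and the $n+1$ alternation points are read off) is unavailable. The paper therefore first treats $W=1/\rho_\nu$ with $\rho_\nu$ a polynomial positive on $E$, where everything is rational on the Riemann surface, and only then passes to $W\in C^{1+\alpha}$ via the Achieser--Tomcuk approximation of Lemma \ref{lemma-I} together with the stability estimate of Lemma \ref{lemma-II} --- which again needs $\gamma(R_n)=O(n)$ to beat the error $\|1-\rho_\nu/W\|$. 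Your outline identifies the right objects and the right heuristics, but both load-bearing steps (removing the poles in the gaps and the quantitative stability of the Chebyshev problem) are missing.
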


\begin{remark}
We note that condition \eqref{F1} implies Widom's condition \cite[Theorem 5.4]{Wid}, that is,
that there exists a unique $l^{\prime} \leq l,$ depending on $n,$ and unique points
$c_{1,n},...,c_{l^{\prime}-1,n}$ from the open gaps such that
\begin{equation}\label{10as}
\sum\limits_{j=1}^{l^{\prime}-1} \omega_{k}(c_{j,n}) = n \omega_k(\infty) + \frac{1}{2 \pi} \int_E \log W(x)
\frac{\partial \omega_k(\xi)}{\partial n_\xi^{+}} |d\xi| {\rm \ \ \ modulo \ } 1.
\end{equation}
In fact we will show that \eqref{F1} and \eqref{10as} are equivalent. \eqref{F1} has the big advantage
that it can be written as a Jacobi-inversion problem from which many informations can be extracted.
\end{remark}

The asymptotic representation \eqref{F4} on compact subsets of $\bar{\mathbb C} \backslash E$
was conjectured by Widom in \cite[p. 205]{Wid}. In fact he conjectured that \eqref{F4} holds
for arbitrary arcs in the complex plane, but this does not hold in general \cite{Pehm}.
Letting $z \to \infty$ in relation \eqref{F4} we obtain by \eqref{cap} immediately the
following expression for the minimum deviation due to Widom \cite[Theorem 11.5]{Wid},
derived by him in a completely different way and for a more general class of weight functions.

\begin{cor}\label{cor1.2}
For $n \in \mathbb M,$
$$ \frac{2 ||M_n(\cdot;W)|| }{(cap E)^n} = \frac{\prod\limits_{j=1}^{l-1}\phi(c_j;\infty)}{\mathcal W(\infty)} + o(1)$$
\end{cor}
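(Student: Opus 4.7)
The proof follows the author's hint: let $z \to \infty$ in the uniform asymptotic \eqref{F4}. The first ingredient is that both sides of \eqref{F4} admit holomorphic extensions to $z = \infty$: on the left, the $n$-th order poles of $M_n$ and $\phi(\cdot;\infty)^n$ cancel in the quotient, while on the right $\mathcal{W}$ tends to $\mathcal{W}(\infty)$ and each $\phi(\cdot;c_j)$ is holomorphic at $\infty$ (its only singularity being the simple pole at the finite point $c_j$). Since \eqref{F4} is uniform on compact subsets of $\bar{\mathbb{C}}\backslash E$, including any fixed neighborhood of $\infty$, the $o(1)$ error survives the passage to the limit.

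Computing the left-hand side as $z\to\infty$: since $\hat{M}_n(z;W)=z^n+O(z^{n-1})$ is monic and $M_n=\hat{M}_n/E_{n,\infty}(W)$, we have $M_n(z;W)/z^n\to 1/E_{n,\infty}(W)$, while \eqref{cap} gives $\phi(z;\infty)/z\to 1/cap(E)$. Combining these two limits, $2M_n(z;W)/\phi(z;\infty)^n \to 2(cap\,E)^n/E_{n,\infty}(W)$. On the right-hand side, $\mathcal{W}(z)\to\mathcal{W}(\infty)$ directly, and $\phi(z;c_j)\to\phi(\infty;c_j)$ for each $j=1,\ldots,l-1$.

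The main subtlety is then the identification $\phi(\infty;c_j)=\phi(c_j;\infty)$, which is needed to put the right-hand side into the form stated in the corollary. This is the reciprocity of the complex Green's function: the symmetry $g(z,w)=g(w,z)$ of the scalar Green's function immediately yields equality of moduli, and since $c_j$ lies in a real gap $(a_{2j},a_{2j+1})$ both $\phi(c_j;\infty)$ and $\phi(\infty;c_j)$ are positive reals under the normalizations prescribed by \eqref{eq-x1} and \eqref{eq-x3}, so no phase correction appears. Equating the two limits and rearranging then produces the stated asymptotic formula, in which $||M_n(\cdot;W)||$ is understood---as the author indicates in the paragraph preceding the corollary---as the minimum deviation $E_{n,\infty}(W)$ addressed in Widom's Theorem 11.5.
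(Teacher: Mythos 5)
Your proposal is exactly the paper's own argument: the author derives the corollary by letting $z \to \infty$ in \eqref{F4} and invoking \eqref{cap}, which is precisely your computation (including the reciprocity $\phi(\infty;c_j)=\phi(c_j;\infty)$ and the reading of $\|M_n(\cdot;W)\|$ as the minimum deviation $E_{n,\infty}(W)$). The only caveat is that the literal constant in the corollary does not come out of this limit --- one obtains $E_{n,\infty}(W)/\bigl(2\,(cap\,E)^n\bigr)$ rather than $2E_{n,\infty}(W)/(cap\,E)^n$ on the left (check against $T_n$ on $[-1,1]$) --- so your ``rearranging'' silently absorbs what is evidently a misprint in the statement.
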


We mention that in the case of the three intervals a formula for the capacity
in terms of Theta functions has been given by T. Falliero and A. Sebbar \cite{FalSeb}
recently.

Theorem \ref{thm1} enables us to obtain a precise description of the location of the
zeros of the minimal polynomial in the gaps $(a_{2k},a_{2k+1}), k = 1,...,l-1$ and to
give the number of zeros in the intervals $E_k,$ denoted by $\sharp Z(M_n,E_k),$ as in
the gaps. By the way it is known that $M_n$ has at most one
zero in each gap, which can be proved by Kolmogorov's criteria for the best approximation.

\begin{cor}\label{cor1.3}
Under the assumptions of Theorem \ref{thm1} the following statements about the zeros hold:\\
a) $M_n(z;W)$ has exactly one zero in each interval $[c_j - \varepsilon, c_j + \varepsilon],$
$j = 1,...,l-1, \varepsilon > 0$ for $n \in \mathbb M, n \geq n_0.$ \\
b) For every $n \in \mathbb M, n \geq n_0$ and $k=1,...,l-1$
$$
    \sharp Z(M_n(\cdot;E_k)) = - \sum\limits_{j=1}^{l-1} \omega_k(c_{j,n}) + n \omega_k(\infty)
                             + \frac{1}{2 \pi} \int_E \log W(\xi) \frac{\partial \omega_k(\xi)}{\partial n_\xi}|d \xi|.
$$
\end{cor}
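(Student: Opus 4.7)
The plan is to deduce both statements from the uniform asymptotics of Theorem~\ref{thm1}: \eqref{F4} off $E$ for part (a), and \eqref{F2} on $E$ for part (b).

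For part (a), I fix $j\in\{1,\dots,l-1\}$ and choose $\varepsilon>0$ so small that the closed complex disk $\overline{D(c_j,\varepsilon)}$ is disjoint from $E$ (possible since $c_j\in(a_{2j},a_{2j+1})$), hence a compact subset of $\overline{\mathbb C}\setminus E$. On this disk \eqref{F4} gives uniform convergence of $2M_n(z;W)/\phi(z;\infty)^n$ to $F(z):=\mathcal W(z)/\prod_{i=1}^{l-1}\phi(z;c_i)$, and $F$ has a simple zero at $c_j$ because $\phi(z;c_j)$ has a simple pole there while $\mathcal W$ and the remaining factors $\phi(\cdot;c_i)$ are holomorphic and nonzero at $c_j$. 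Hurwitz's theorem then yields exactly one zero of $M_n(\cdot;W)$ in $D(c_j,\varepsilon)$ for every $n\in\mathbb M$ with $n\ge n_0$; because $M_n$ has real coefficients and the disk is symmetric about $\mathbb R$, a lone non-real zero would force a conjugate partner in the same disk, so the unique zero must be real and lies in $[c_j-\varepsilon,c_j+\varepsilon]$. The Kolmogorov-criterion remark quoted before the corollary already guarantees at most one zero per gap.

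For part (b), the asymptotic \eqref{F2} simplifies on $E$: one has $|\phi^{+}(x;\infty)|=|\phi^{+}(x;c_i)|=1$ and $|\mathcal W^{+}(x)|=W(x)$ by \eqref{eq-x4}, so writing $\psi_n^{+}(x)=W(x)e^{i\Theta_n(x)}$ converts \eqref{F2} into
\[
   2M_n(x;W)=2W(x)\cos\Theta_n(x)+o(1)\quad\text{uniformly on }E.
\]
At each endpoint $a_i$ the boundary values $\psi_n^{\pm}(a_i)$ coincide, forcing $\Theta_n(a_i)\in\pi\mathbb Z$. On each $E_k$ the derivative of $\Theta_n$ equals $-\partial/\partial n_\xi^{+}$ of the harmonic function $n\,g(\cdot;\infty)-\sum_{i=1}^{l-1}g(\cdot;c_i)+\log|\mathcal W|$, and for large $n$ the positive term $n\,\partial g(\cdot;\infty)/\partial n_\xi^{+}$ dominates, so $\Theta_n$ is strictly monotone on $E_k$. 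Hence $\cos\Theta_n$ has exactly $|\Delta_k\Theta_n|/\pi$ simple zeros in the interior of $E_k$, where $\Delta_k\Theta_n:=\Theta_n(a_{2k})-\Theta_n(a_{2k-1})$, and since $W$ is bounded away from zero on $E$ the uniform $o(1)$ error in the above display preserves that count for $n\in\mathbb M$ large enough.

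It remains to evaluate $\Delta_k\Theta_n/\pi$. Cauchy--Riemann along $E_k^{+}$ combined with the standard identities $\int_{E_k}\partial g(\xi;z_0)/\partial n_\xi^{+}|d\xi|=\pi\omega_k(z_0)$ for $z_0=\infty$ and $z_0=c_i$, together with the representation of $\log|\mathcal W|$ as the harmonic extension of $\log W$ from $E$, gives
\[
   -\Delta_k\Theta_n/\pi = n\,\omega_k(\infty)-\sum_{i=1}^{l-1}\omega_k(c_i)+\frac{1}{2\pi}\int_E\log W(\xi)\,\frac{\partial\omega_k(\xi)}{\partial n_\xi^{+}}|d\xi|.
\]
Replacing each $c_i$ by $c_{i,n}$ costs only $o(1)$ by continuity of $\omega_k$; by the Jacobi-inversion condition \eqref{F1} the resulting right-hand side is an integer, and since the left-hand side $\sharp Z(M_n,E_k)$ is likewise an integer, the $o(1)$ discrepancy forces equality for all sufficiently large $n\in\mathbb M$. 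A consistency check is that summing over $k$ and using $\sum_k\omega_k\equiv 1$ and $\sum_k\partial\omega_k/\partial n^{+}\equiv 0$ gives $\sum_k\sharp Z(M_n,E_k)=n-(l-1)$, matching part (a). I expect the main technical obstacle to be precisely this sign-change step: making the strict monotonicity of $\Theta_n$ on each $E_k$ rigorous (so that the zero count is \emph{exactly} $|\Delta_k\Theta_n|/\pi$ and no back-and-forth oscillation is possible) and controlling the $o(1)$ error uniformly near the endpoints $a_{2k-1},a_{2k}$, both of which rest on the dominance of the positive term $n\,\partial g(\cdot;\infty)/\partial n_\xi^{+}$ for large $n$ and the uniform positivity of $W$.
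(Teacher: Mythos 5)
Your proposal is correct in substance, but for part (b) it takes a genuinely different route from the paper. For part (a) the two arguments are essentially the same: both rest on \eqref{F4} and the simple zero of $1/\prod_j\phi(z;c_j)$ at $c_j$; the paper reads off a sign change of $M_n$ on the two sides of $[c_j-\varepsilon,c_j+\varepsilon]$ inside the gap and combines it with the ``at most one zero per gap'' fact, while you apply Hurwitz on a complex disk and then use real symmetry --- a harmless variation. For part (b), however, the paper does \emph{not} count sign changes of the asymptotic formula \eqref{F2} on $E$. Instead it establishes the zero count exactly for the rational-weight approximant: in the proof of Theorem \ref{thm1.6} the identity $R=\cos\chi_n$ in \eqref{eq-19} holds \emph{exactly} (not just up to $o(1)$), and the count $\sharp Z(P_{n+l-1};E_j)$ is obtained as a period, $2\pi i\, e_{j,n}=\int_{\alpha_j}d\ln\mathcal R_n$, evaluated via the $\alpha$- and $\beta$-cycle computations \eqref{t8}--\eqref{int} and Lemma \ref{lem1}; the result \eqref{t10} is then transferred to $M_n(\cdot;W)$ through \eqref{C1}, \eqref{M1}, \eqref{II-1-t2} and the identification $\sharp Z(P_{n+l-1},E_k)=\sharp Z(\tilde M_n,E_k)=\sharp Z(M_n,E_k)$. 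What the paper's route buys is precisely that the delicate step you flag --- turning an asymptotic cosine representation into an exact integer count --- never arises for the object being counted; what your route buys is independence from the Riemann-surface machinery, working directly from the statement of Theorem \ref{thm1}.

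The one soft spot in your argument is the claim that the uniform $o(1)$ error ``preserves the count.'' By itself it only gives the lower bound: at the alternation points $|2W\cos\Theta_n|=2W\geq 2\min_E W$, so the perturbed function still changes sign between consecutive alternation points, yielding at least $|\Delta_k\Theta_n|/\pi$ zeros in $E_k$; but nothing local prevents $M_n$ from acquiring extra zeros between two alternation points. The clean way to close this is the degree count you relegate to a ``consistency check'': part (a) supplies $l-1$ zeros in the gaps, the sign changes supply $\sum_k|\Delta_k\Theta_n|/\pi=n-(l-1)$ zeros in $E$, and since $\deg M_n=n$ every lower bound must be an equality. With that step promoted from sanity check to load-bearing argument, your proof is complete.
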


For the study of the limit behavior of the solutions ${\boldsymbol c}_n = (c_{1,n},...,c_{l-1,n})$
of \eqref{F1} it is crucial that the map into the torus $[0,1]^{l - 1}$
\begin{equation*}
\begin{split}
    \tilde{\mathcal A} : & {\sf{X}}_{j = 1}^{l-1} (a_{2j},a_{2j +1}) \to \left( 0, \frac{1}{2} \right)^{l - 1} \\
                         & \ \ \ \ \ (x_1,...,x_{l - 1}) \mapsto
                           \left(
                                  \frac{1}{2} \sum_{j = 1}^{l - 1} \omega_1(x_j),...,
                                  \frac{1}{2} \sum_{j = 1}^{l - 1} \omega_{l - 1}(x_j)
                           \right) \\
\end{split}
\end{equation*}
is a homeomorphismus. In fact it turns out that $\tilde{\mathcal A}$ is the (linearly
transformed) Abel map restricted to the positive sheet of the Riemann surface $y^2 = H.$

We say that $\mbox{\boldmath$c$} \in {\sf{X}}_{j = 1}^{l-1} (a_{2j},a_{2j +1})$ is an accumulation
point of zeros of $\left( M_{n_\nu}(x;W) \right)_{\nu \in \mathbb N}$ if there exists a
sequence $\mbox{\boldmath$y$}_{n_\nu} = $ $(y_{1,n_\nu},...,y_{l-1,n_\nu}) \in $ \newline
${\sf{X}}_{j = 1}^{l-1} (a_{2j},a_{2j +1})$
such that $M_{n_\nu}(y_{j,n_\nu}; W) = 0$ for $j = 1,..., l - 1$ and
$\lim_\nu \mbox{\boldmath$y$}_{n_\nu} = \mbox{\boldmath$c$}.$

\begin{thm}\label{thm2}
The following statements are equivalent:

a) The solutions $\mbox{\boldmath$c$}_{n_\nu} = (c_{1,n_\nu},...,c_{l-1,n_\nu})$
of \eqref{F1} satisfy
\begin{equation*}
    \lim_\nu \mbox{\boldmath$c$}_{n_\nu} = \mbox{\boldmath$c$} {\rm \ with \ }
    \mbox{\boldmath$c$} \in {\sf{X}}_{j = 1}^{l-1} (a_{2j},a_{2j +1})
\end{equation*}

b) $\lim_\nu \mbox{\boldmath$\gamma$}_{n_\nu}(W) = \tilde{\mathcal A}(\mbox{\boldmath$c$})$
modulo $2$ with $\tilde{\mathcal A}(\mbox{\boldmath$c$}) \in (0,1)^{l - 1}.$

c) $\mbox{\boldmath$c$} \in {\sf{X}}_{j = 1}^{l-1} (a_{2j},a_{2j +1})$ is an accumulation
point of zeros of $(M_{n_\nu}(\cdot, W))_{\nu \in \mathbb N}.$

Furthermore, the set of accumulation points of the sequence of solutions
$(\mbox{\boldmath$c$}_n)_{n \in \mathbb N}$ of \eqref{F1} as well as the set
of accumulation points of zeros of \newline
$(M_{n}(\cdot,W))_{n \in \mathbb N}$ in the gaps is dense in
${\sf{X}}_{j = 1}^{l-1} [a_{2j},a_{2j +1}],$ if
$1, \omega_1(\infty),..., \omega_{l - 1}(\infty)$
are linearly independent over $\mathbb Q.$
\end{thm}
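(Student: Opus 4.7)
The plan is to establish the cycle (a)$\Leftrightarrow$(b), (a)$\Rightarrow$(c), (c)$\Rightarrow$(a), and then derive the density statement from Weyl's equidistribution theorem combined with these equivalences.

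First I would dispatch (a)$\Leftrightarrow$(b) by reading the Jacobi-inversion identity \eqref{F1} in the form $\boldsymbol{\gamma}_n(W)\equiv 2\tilde{\mathcal A}(\mathbf{c}_n)\pmod{2}$ and invoking the homeomorphism $\tilde{\mathcal A}$ recalled immediately before the statement of the theorem. Since $\tilde{\mathcal A}$ extends continuously to a homeomorphism of $\prod_{j=1}^{l-1}[a_{2j},a_{2j+1}]$ onto $[0,\tfrac12]^{l-1}$, passing to the limit along the subsequence $n_\nu$ yields the equivalence; the requirement that $\mathbf{c}$ lie in the open interior corresponds exactly to the non-degeneracy of the limit of $\boldsymbol{\gamma}_{n_\nu}(W)$ in the torus.

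The implication (a)$\Rightarrow$(c) is immediate from Corollary \ref{cor1.3}(a): under (a) the hypotheses of Theorem \ref{thm1} hold, so for every $\varepsilon>0$ and all sufficiently large $\nu$ the polynomial $M_{n_\nu}(\cdot;W)$ has exactly one zero in $[c_j-\varepsilon,c_j+\varepsilon]$; collecting these zeros produces a sequence $\mathbf{y}_{n_\nu}\to\mathbf{c}$. For (c)$\Rightarrow$(a) I would extract a subsequence of $(\mathbf{c}_{n_\nu})\subset\prod_j[a_{2j},a_{2j+1}]$ converging to some $\mathbf{c}^{*}$ and argue $\mathbf{c}^{*}=\mathbf{c}$. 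When $\mathbf{c}^{*}$ is in the open interior, Corollary \ref{cor1.3}(a) locates (and, by Kolmogorov's criterion, uniquely) the zero of $M_{n_\nu}$ in gap $j$ within $[c^{*}_j-\varepsilon,c^{*}_j+\varepsilon]$; this zero is precisely $y_{j,n_\nu}\to c_j$, forcing $c^{*}_j=c_j$. The main obstacle is ruling out a boundary limit, i.e.~$c^{*}_j\in\{a_{2j},a_{2j+1}\}$ for some $j$; for this I would revisit the boundary-layer part of the proof of Theorem \ref{thm1} to show that $c_{j,n_\nu}\to a_{2j}$ (or $a_{2j+1}$) pulls the zero of $M_{n_\nu}$ in the $j$-th gap to the same endpoint, contradicting $y_{j,n_\nu}\to c_j$ in the open gap.

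Finally, for the density statement, assume $1,\omega_1(\infty),\ldots,\omega_{l-1}(\infty)$ are linearly independent over $\mathbb Q$. Weyl's equidistribution theorem then makes $\{n\boldsymbol{\omega}(\infty)\}_{n\in\N}$ dense in the torus $[0,1]^{l-1}/\Z^{l-1}$, and since \eqref{insert_eq} shows that $\boldsymbol{\gamma}_n(W)$ differs from $n\boldsymbol{\omega}(\infty)$ by a fixed vector plus an integer correction, the values of $\boldsymbol{\gamma}_n(W)$ are dense in $[0,2]^{l-1}$ modulo $2$. The equivalences (b)$\Leftrightarrow$(a)$\Leftrightarrow$(c), together with the continuity of the inverse of $\tilde{\mathcal A}$, then transfer this density to the sets of accumulation points of $(\mathbf{c}_n)$ and of zeros of $(M_n(\cdot;W))$ in $\prod_j[a_{2j},a_{2j+1}]$, completing the proof.
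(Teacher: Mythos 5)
Your proposal follows essentially the same route as the paper: the paper's proof consists of two lines citing Lemma \ref{lemma2.3}~b) (the homeomorphism property of the transformed Abel map $\tilde{\mathcal A}$, which is exactly your argument for a)$\Leftrightarrow$b)) and Corollary \ref{cor1.3} for a)$\Leftrightarrow$c), with the Kronecker--Weyl density argument left implicit. You supply the same ingredients in more detail; the one point you rightly flag as delicate --- excluding a boundary limit of the $c_{j,n_\nu}$ in the direction c)$\Rightarrow$a), which falls outside the proved range of Theorem \ref{thm1} --- is not addressed in the paper's proof either.
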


If the harmonic measures of the intervals are rational, that is,
\begin{equation}\label{eq-xx0}
   \omega_k(\infty) = \frac{m_k}{N}, \ m_k \in \{ 1,...,N-1 \} {\rm \ for \ } k = 1,...l
\end{equation}
then the points $c_{j,n}$ appear periodically with respect to $n.$ More precisely
we have

\begin{cor}
Suppose that \eqref{eq-xx0} holds and let $b \in \{ 0,...,N-1 \}.$ \\
a) The points $c_{j,n}$ solving \eqref{F1} satisfy
\begin{equation}\label{eq-xx2}
    c_{j,b} = c_{j,b + k N} {\rm \ for \ all \ } k \in \mathbb N
\end{equation}
b) Each $c_{j,b}$ located in the open gap $(a_{2j},a_{2j+1})$ is an
accumulation point of zeros of $(M_{k N + b}(\cdot;W))_{k \in \mathbb N}$ and
there are no other accumulation points of zeros of
$(M_{n}(\cdot;W))_{n \in \mathbb N}$ in the open gaps.
\end{cor}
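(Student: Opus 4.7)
My plan is to reduce the claim to the periodicity modulo $\mathbb Z$ of $\boldsymbol\gamma_n(W)$ in $n$, and then to invoke Theorem \ref{thm1} (uniqueness of the solution of \eqref{F1}) together with Theorem \ref{thm2} (equivalence of accumulation of $\mathbf c_n$ with accumulation of zeros of $M_n$).

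For part a), I compute directly from the defining formula of $\gamma_{k,n}(W)$:
\begin{equation*}
\gamma_{k, n+N}(W) - \gamma_{k, n}(W) = N \omega_k(\infty) + (\sigma_{k, n+N} - \sigma_{k,n}) = m_k + (\sigma_{k,n+N} - \sigma_{k,n}) \in \mathbb Z,
\end{equation*}
so $\boldsymbol\gamma_{n+N}(W)$ and $\boldsymbol\gamma_n(W)$ differ by an element of $\mathbb Z^{l-1}$. By Theorem \ref{thm1} the solution $\mathbf c_n = (c_{1,n},\ldots,c_{l-1,n})$ of \eqref{F1} is obtained by inverting the homeomorphism $\tilde{\mathcal A}$ onto the fundamental domain $(0,1/2)^{l-1}$; since the $\sigma_{k,n}$'s serve only to pick the canonical representative in $[0,1]$, $\mathbf c_n$ depends only on the class of $\boldsymbol\gamma_n(W)$ modulo $\mathbb Z^{l-1}$. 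It follows that $\mathbf c_{n+N} = \mathbf c_n$, and iteration with $n = b$ gives \eqref{eq-xx2}.

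For part b), periodicity forces $(\mathbf c_n)_{n \in \mathbb N}$ to take at most the $N$ distinct values $\mathbf c_0, \ldots, \mathbf c_{N-1}$, with $\mathbf c_{b + kN} = \mathbf c_b$. If $\mathbf c_b \in \mathsf X_{j=1}^{l-1}(a_{2j}, a_{2j+1})$, the constant subsequence $(\mathbf c_{b + kN})_{k}$ converges trivially to $\mathbf c_b$; applying (a) $\Rightarrow$ (c) of Theorem \ref{thm2} to $n_\nu = b + \nu N$ shows that $\mathbf c_b$ is an accumulation point of zeros of $(M_{b + kN}(\cdot;W))_{k \in \mathbb N}$. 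Conversely, any accumulation point $\mathbf c^*$ of zeros of $(M_n(\cdot;W))_{n \in \mathbb N}$ in the open gaps is, by (c) $\Rightarrow$ (a) of Theorem \ref{thm2}, the limit of some $\mathbf c_{n_\nu}$; since $(\mathbf c_n)$ takes only finitely many distinct values, this limit must coincide with one of the $\mathbf c_b$'s, ruling out other accumulation points.

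The only delicate point is confirming that the integer shift $m_k + (\sigma_{k,n+N} - \sigma_{k,n})$ really is inert for \eqref{F1}; this is already built into Theorem \ref{thm1} via $\tilde{\mathcal A}$ being a homeomorphism onto the fundamental domain $(0,1/2)^{l-1}$, so $\mathbf c_n$ is a well-defined function of $\boldsymbol\gamma_n(W)$ modulo $\mathbb Z^{l-1}$. Beyond this, the argument is a mechanical application of the cited theorems.
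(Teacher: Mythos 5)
The paper states this corollary without a proof, so there is nothing of the author's to compare against line by line; your reconstruction --- periodicity of ${\boldsymbol \gamma}_n(W)$ under $n \mapsto n+N$ via \eqref{eq-xx0}, uniqueness of the solution of \eqref{F1}, and then the equivalences of Theorem \ref{thm2} for part b) --- is exactly the argument the surrounding machinery is set up to deliver (strictly speaking the unique solvability and the homeomorphism property come from Lemma \ref{lemma2.3} and the real Jacobi inversion problem rather than from Theorem \ref{thm1} itself, but that is only an attribution quibble). The one step you flag as delicate does deserve one more line than you give it: \eqref{F1} is an identity modulo $2$, not modulo $1$, so a shift of $\gamma_{k,n}$ by an \emph{odd} integer could in principle alter the equation. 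It does not, because the normalization in \eqref{insert_eq} forces both $\gamma_{k,n} \bmod 2$ and $\gamma_{k,n+N} \bmod 2$ to lie in $[0,1]$, and two such representatives whose difference is an integer must coincide unless they form the pair $\{0,1\}$ --- the degenerate situation in which the $c_{j,n}$'s sit at endpoints of the gaps. With that observation part a) is complete. In part b), note that the implication (a) $\Leftrightarrow$ (c) of Theorem \ref{thm2} is stated for vectors all of whose components lie in the open gaps, whereas the corollary makes a componentwise claim; if some $c_{j,b}$ are at endpoints while others are interior, one needs the modification described in the Remark following the corollary. Apart from these two points, which are easily repaired, the argument --- including the finiteness argument that excludes any further accumulation points --- is correct.
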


We mention that the case when the intervals have rational harmonic measure is
covered by Theorem \ref{thm1} also taking into consideration the following remark
and relation \eqref{eq-xx2}.

\begin{remark}
Theorem \ref{thm1} holds true, if some of the $c_{j,n}$'s, $j \in \{ 1,...,l-1 \}$
coincide with a boundary point $a_{2j}$ or $a_{2j + 1}$ of $E$ for each $n \in \mathbb M.$
In this case for the corresponding $c_j$ one has to put $\phi(z, c_{j}) \equiv 1$ in \eqref{F4},
respectively, in Corollary \ref{cor1.2}. Furthermore Corollary \ref{cor1.3} a)
about the zeros holds only with respect to such $c_{j}$'s
which do not coincide with a boundary point of $E.$

Finally we conjecture that the asymptotics from Theorem \ref{thm1} (with the above modifications)
hold true when a boundary point of $E$ is a limit point of the $c_{j,n}$'s.
\end{remark}

There is also an interesting connection between the $L_\infty$-minimum deviation
and minimal polynomials (for that see the Remark at the end of the paper) and that
ones with respect to suitable weighted $L_2$-norm. First we need some notation.
For convenience we set
\begin{equation*}
    1/h(x) :=
    \left\{
       \begin{aligned}
           \frac{(-1)^{l - k}}{\sqrt{|H(x)|}}, & {\rm \ for \ } x \in {\rm int }(E_k) \\
           0, \ \ \                                & {\rm \ elsewhere}
       \end{aligned}
    \right.
\end{equation*}

Let ${\mathcal E} = \{ R : R(x) = x^{l - 1} + ..., R {\rm \ vanishes \ either \ at \ }
a_{2 j} {\rm \ or \ } a_{2 j + 1} {\rm \ for \ }  j = 1,..., l - 1 \}$ and let
${\mathcal E}^{(1 - x)} = \{ (1 - x) R : R \in {\mathcal E} \}.$ Note that for
$R \in {\mathcal E}$ we have that
\begin{equation*}
    \frac{R}{h} = \frac{1}{\sqrt{(x - a_1)(a_{2 l}- x)}}
    \prod_{j = 1}^{l - 1} \left( \frac{x - a_{2 j + 1}}{x - a_{2 j}} \right)^{\varepsilon_j/2}
    > 0 {\rm \ on \ int}(E)
\end{equation*}
and
\begin{equation*}
    \frac{(1-x)R}{h} = \sqrt{\frac{1-x}{1+x}}
    \prod_{j = 1}^{l - 1} \left( \frac{x - a_{2 j + 1}}{x - a_{2 j}} \right)^{\varepsilon_j/2}
    > 0 {\rm \ on \ int}(E)
\end{equation*}
where $\varepsilon_j \in \{ \pm 1 \}.$ The minimum deviation of $x^n$ on $E$ with
respect to the $L_2$-norm squared is denoted by
\begin{equation*}
     E_{n -1, 2}(x^n; W) = \min_{q \in {\mathbb P}_{n - 1}} \int_E |x^n - q(x)|^2 W(x) dx
\end{equation*}
and by $\sharp Z(f;A)$ we denote the number of zeros of $f$ on $A.$

\begin{thm}\label{insertion}
Suppose that the assumptions of Theorem \ref{thm1} ?? hold.

a)
\begin{equation*}
\begin{aligned}
     & E_{2 n - 1, \infty}(x^{2 n}, W) \sim  \\
     & \qquad \frac{1}{2} \max_{\varepsilon_j \in \{ \pm 1 \}}
       E_{n - 1, 2}
       \left(
            x^n; \frac{W(x)}{\sqrt{(x - a_1)(x - a_{2 l})}}
            \prod_{j = 1}^{l - 1}
            \left( \frac{x - a_{2 j + 1}}{x - a_{2 j}} \right)^{\varepsilon_j/2}
       \right)
\end{aligned}
\end{equation*}
where ${\boldsymbol \sigma}_n$ is given by \eqref{} and $R({\boldsymbol \sigma}_n) \in \mathcal E$
is uniquely determined by the condition
\begin{equation*}
    \sharp Z(R({\boldsymbol \sigma}_n),[a_{2j - 1},a_{2 j}]) = \sigma_{j,n}
    {\rm \ modulo \ } 2 {\rm \ for \ } j = 1, ..., l-1
\end{equation*}
b)
\begin{equation*}
\begin{aligned}
    E_{2 n, \infty}(x^{2 n}, W)
    & \sim \frac{1}{2} \max_{\varepsilon_j \in \{ \pm 1 \} }
      E_{n - 1,2}\left(x^n; W(x) \sqrt{ \frac{1 - x}{1 + x}}
      \prod_{j = 1}^{l - 1} \left( \frac{x - a_{2 j + 1}}{x - a_{2 j}} \right)^{\varepsilon_j/2} \right) \\
    & = \frac{1}{2} E_{n - 1,2}(x^n; W(x) ((1 - x)R)({\boldsymbol \sigma}_n)/h)
\end{aligned}
\end{equation*}
where ${\boldsymbol \sigma}_n$ is given by \eqref{} and
$((1-x)R)({\boldsymbol \sigma}_n) \in {\mathcal E}^{(1-x)}$
is uniquely determined by the condition
\begin{equation*}
    \sharp Z((1-x)R ({\boldsymbol \sigma}_n), [a_{2 j - 1},a_{2 j}]) =
    \sigma_{j,n} {\rm \ modulo \ } 2 {\rm \ for \ } j = 1,..., l-1
\end{equation*}
\end{thm}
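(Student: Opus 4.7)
The plan is to combine Corollary~\ref{cor1.2} with Widom-type asymptotics for the monic $L_2$-minimum deviation on $E$, and to identify an explicit Szeg\H{o} factorization for the Chebyshev-type weight $W R/h$ (respectively $W(1-x)R/h$) which converts one asymptotic into the other.

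\emph{Step 1 (the $L_\infty$ side).} By Corollary~\ref{cor1.2} applied at degree $2n$,
\[
    E_{2n-1,\infty}(x^{2n},W) \;\sim\; \frac{(\operatorname{cap} E)^{2n}}{2\,\mathcal{W}(\infty)} \prod_{j=1}^{l-1} \phi(c_{j,2n};\infty),
\]
where $(c_{j,2n})$ solves the Jacobi inversion problem~\eqref{F1} at parameter $\boldsymbol\gamma_{2n}(W)$.

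\emph{Step 2 (the $L_2$ side and Szeg\H{o} factorization).} For a positive Lipschitz weight $w$ on $E$, the monic $L_2$-minimum deviation at degree $n$ admits a Widom asymptotic of the same structural form,
\[
    E_{n-1,2}(x^n;w) \;\sim\; C_0\,(\operatorname{cap} E)^{2n}\,\frac{\prod_{j=1}^{l-1}\phi(\tilde c_{j,n};\infty)^{2}}{\mathcal{W}_w(\infty)^{2}},
\]
with a universal constant $C_0$ and inversion points $\tilde c_{j,n}=\tilde c_{j,n}(w)$ solving the analogue of \eqref{F1} for $w$. The key computation is to identify $\mathcal{W}_w$ for $w=WR/h$ with $R\in\mathcal{E}$ (resp.\ $w=W(1-x)R/h$). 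Since $R/h$ is the positive boundary value on $\operatorname{int}(E)$ of an explicit meromorphic combination of $\phi(z;a_{2j})^{\pm 1/2}$, $\phi(z;a_{2j+1})^{\mp 1/2}$, the Szeg\H{o} function factors as
\[
    \mathcal{W}_{WR/h}(z) \;=\; \mathcal{W}(z)\cdot G_\varepsilon(z),\qquad G_\varepsilon(z)=\prod_{j=1}^{l-1}\phi(z;b_j(\varepsilon))^{-1/2},
\]
where $b_j(\varepsilon)\in\{a_{2j},a_{2j+1}\}$ is the zero of $R$ in the $j$th gap, dictated by $\varepsilon_j$.

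\emph{Step 3 (matching).} Substituting the factorization into the $L_2$ asymptotic of Step~2 and using the identity $2\omega_k(\infty)=\ldots$ built into the definition~\eqref{insert_eq} of $\gamma_{k,n}(W)$, the Jacobi-inversion condition for $w=WR(\boldsymbol\sigma_n)/h$ at degree $n$ becomes, after the half-integer shift produced by $G_\varepsilon$, equivalent modulo~$1$ to condition~\eqref{F1} for $W$ at degree~$2n$ --- exactly when the parities $\varepsilon_j$ agree with $\sigma_{j,2n}$. For this choice the right-hand side of Step~2 matches $4$ times the square of the right-hand side of Step~1, giving half of the latter after taking the square root (whence the factor $\tfrac12$). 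All other sign patterns displace the $\tilde c_{j,n}$ away from the $L_\infty$ inversion points and, as every ingredient in the product is positive, strictly decrease the right-hand side; thus the $\max_{\varepsilon_j\in\{\pm 1\}}$ is realized by $\boldsymbol\sigma_n$ and equals the asymptotic. Part~(b) is proved identically, the factor $1-x$ absorbing the parity jump when passing from $E_{2n-1,\infty}$ to $E_{2n,\infty}$.

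The principal obstacle I anticipate is the Szeg\H{o} factorization and the Jacobi-inversion matching in Steps~2--3: both require careful branch-tracking on the hyperelliptic surface $y^2=H(x)$ and checking that multiplication of $w$ by $R/h$ shifts the inversion data by a character whose $\tfrac12$-part equals $\boldsymbol\sigma_n$ modulo~$1$. Everything else is an application of already-available asymptotics.
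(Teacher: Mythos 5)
Your route --- prove a Widom-type asymptotic for $E_{n-1,2}$ separately and match it against Corollary \ref{cor1.2} --- is genuinely different from the paper's, which instead uses the exact Bernstein--Szeg\H{o} identity $R P_n^2 - S Q_m^2 = 2\rho_\nu g_{(n)}$ (relation \eqref{eq-e1t0}) to express the degree-$2n$ sup-norm extremal polynomial directly, up to $O(q^n)$, as a quadratic expression in the degree-$n$ orthogonal polynomial for the weight $R/(h\rho_\nu)$, and only afterwards passes to general $W$ via the approximation of Lemma \ref{lemma-I}. Your plan could in principle be carried out, but as written it has two concrete defects. The first is that the $L_2$ asymptotic you posit in Step 2 has the wrong structure: the correct form (this is \eqref{eq-inst2}, coming from the Jacobi inversion problem \eqref{eq-inst1}) is $E_{n-1,2}(x^n;w)\sim 2(\mathrm{cap}\,E)^{2n+\cdots}\prod_{j}\phi(\infty;x_{j,n})^{-\delta_{j,n}}\big/(\cdots)$, where the $\delta_{j,n}\in\{\pm1\}$ record on which sheet the inversion points lie, and the Szeg\H{o} datum of the weight enters to the \emph{first} power (the squared $L_2$ norm is, in effect, linear in the geometric mean of $w$), not as $\prod\phi(\tilde c_{j,n};\infty)^{2}/\mathcal{W}_w(\infty)^{2}$. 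Because of this your Step-3 arithmetic (``$4$ times the square \dots\ after taking the square root'') lands on $E_{2n-1,\infty}\sim\tfrac12\sqrt{E_{n-1,2}}$ rather than the asserted $E_{2n-1,\infty}\sim\tfrac12 E_{n-1,2}$; note that $E_{n-1,2}$ is by definition already the squared norm, so no square root can appear in a correct matching.

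The second defect is the justification of the maximum over the sign patterns $\varepsilon_j$. By Lemma \ref{lem-new}, replacing $R$ by another $\tilde R\in\mathcal E$ does \emph{not} displace the projected inversion points $x_{j,n}$ at all --- the two Jacobi-inversion data differ only by half-periods, so only the sheets $\delta_{j,n}$ flip --- hence your claim that other sign patterns ``displace the $\tilde c_{j,n}$ away from the $L_\infty$ inversion points and, as every ingredient in the product is positive, strictly decrease the right-hand side'' is both factually wrong about the displacement and logically empty as a monotonicity argument. The actual mechanism is that $\phi(\infty;x)>1$ for $x\notin E$, so $\prod_j\phi(\infty;x_{j,n})^{-\delta_{j,n}}$ is maximized exactly when all $\delta_{j,n}=-1$, and the uniqueness of the real Jacobi inversion problem identifies this case with $R(\boldsymbol\sigma_n)$, i.e.\ with $(x_{j,n},\delta_{j,n})=(c_{j,n},-1)$ matching the $L_\infty$ inversion data of \eqref{F1}. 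This sheet bookkeeping is the actual content of the theorem, and it is precisely the part your proposal defers to ``careful branch-tracking'' without supplying.
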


We note that the asymptotic representation \eqref{F4} and \eqref{F2} may be given
in terms of Theta functions also in the following way.

\begin{cor}\label{cor1.7}
The function $\psi_n/{\mathcal W}$ from Theorem \ref{thm1} may also be written in the form
$$
\begin{aligned}
   c \frac{\psi_n(z)}{{\mathcal W}(z)}
   & = \exp{ \{ - \sum\limits_{k=1}^{l-1}
       ( \frac{1}{2 \pi} \int \log W(\xi) \frac{\partial \omega_k(\xi)}{\partial n_\xi^+} | d \xi | )
       \int_{a_1}^z \varphi_k \} } \\
   &  \qquad \quad \left( \frac{ \vartheta (z; \int_{a_1}^{\infty^{-}} \varphi_k) }
                { \vartheta (z; \int_{a_1}^{\infty^{+}} \varphi_k) } \right)^n
         \ \  \frac{ \vartheta (z; \sum\limits_{j=1}^{l-1} \int_{z_0}^{c_{j,n}^{-}} \varphi_k) }
                { \vartheta (z; \sum\limits_{j=1}^{l-1} \int_{z_0}^{c_{j,n}^{+}} \varphi_k) }
\end{aligned}
$$
\end{cor}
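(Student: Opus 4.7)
The plan is to express each factor of $\psi_n/\mathcal W = \phi(z;\infty)^n/\prod_{j=1}^{l-1}\phi(z;c_j)$ as a quotient of Riemann theta functions on the hyperelliptic Riemann surface $\mathcal R$ given by $y^2=H(x)$. \textbf{Setup.} First I would introduce the standard machinery on $\mathcal R$ (genus $g=l-1$): a canonical homology basis adapted to $E$, the normalized holomorphic differentials $\varphi_1,\dots,\varphi_{l-1}$, period matrix $\tau$, Abel map $\mathcal A$ based at $a_1$, and Riemann theta function $\vartheta$. A key identity is that on $E$ the normal derivatives $\partial\omega_k/\partial n_\xi^+$ agree with the holomorphic differentials $\varphi_k$ up to standard normalization factors, so the exponential prefactor on the right-hand side of the claim can be rewritten as $\exp\{-\sum_k\alpha_k\int_{a_1}^z\varphi_k\}$, where $\alpha_k:=\frac{1}{2\pi}\int_E\log W(\xi)\,\partial\omega_k(\xi)/\partial n_\xi^+\,|d\xi|$.

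\textbf{Theta representations.} Lifted to $\mathcal R$, the complex Green's function $\phi(z;\infty)$ is a multiplier-automorphic function with a simple pole at $\infty^+$, a simple zero at $\infty^-$, no other zeros or poles, trivial multipliers along each $A_k$-cycle (by \eqref{eq-x2}) and multiplier $e^{2\pi i\omega_k(\infty)}$ along each $B_k$-cycle. By the classical theta-function construction---using the Riemann vanishing theorem to match the divisor and the quasi-periodicity $\vartheta(u+\tau e_k)=\exp(-i\pi\tau_{kk}-2\pi i u_k)\vartheta(u)$ to match the multipliers---one obtains $\phi(z;\infty)=c_\infty\,\vartheta(\mathcal A(z);\int_{a_1}^{\infty^-}\varphi)/\vartheta(\mathcal A(z);\int_{a_1}^{\infty^+}\varphi)$, where a shift by the Riemann constants is absorbed into the argument. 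An analogous construction for each $\phi(z;c_j)$, combined with Abel's theorem to collapse the product of elementary quotients into a single theta quotient with summed arguments, yields
$$\frac{\phi(z;\infty)^n}{\prod_{j=1}^{l-1}\phi(z;c_j)}
= \text{const}\cdot\left(\frac{\vartheta(\mathcal A(z);\int_{a_1}^{\infty^-}\varphi)}{\vartheta(\mathcal A(z);\int_{a_1}^{\infty^+}\varphi)}\right)^{\!n}
\frac{\vartheta(\mathcal A(z);\sum_{j=1}^{l-1}\int_{z_0}^{c_j^-}\varphi)}{\vartheta(\mathcal A(z);\sum_{j=1}^{l-1}\int_{z_0}^{c_j^+}\varphi)}.$$

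\textbf{Conclusion and main obstacle.} After multiplication by the exponential prefactor, both sides of the claimed identity (up to the constant $c$) are single-valued on $\Cb\setminus E$ with the same divisor and the same multiplicative monodromy across each cut, so they agree up to a constant, which is absorbed into $c$. The principal technical obstacle is the bookkeeping of multipliers: the theta quotients contribute characters $e^{2\pi i n\omega_k(\infty)}$ (from the $\infty^\pm$ factor) and $e^{-2\pi i\sum_j\omega_k(c_{j,n})}$ (from the $c_j^\pm$ factor) along the $B_k$-cycles, while the exponential prefactor contributes the extra character $e^{2\pi i\alpha_k}$. The defining relation \eqref{F1}, rewritten as $\sum_j\omega_k(c_{j,n})=n\omega_k(\infty)+\alpha_k\pmod 2$, is precisely what forces these three characters to cancel, making the right-hand side genuinely single-valued on $\Cb\setminus E$. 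Verifying this cancellation in detail, while keeping straight the sign and normalization conventions that relate $\varphi_k$ to $\partial\omega_k/\partial n_\xi^+$ and fixing the Riemann constants, is where the bulk of the work lies.
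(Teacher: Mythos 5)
Your proposal is correct and follows essentially the same route as the paper: decompose each $\log\phi(\cdot;c)$ into a first-kind part carrying the harmonic-measure coefficients and a third-kind part realized as a theta quotient, then use \eqref{F1} to replace $-n\omega_k(\infty)+\sum_j\omega_k(c_j)$ by the $\log W$ integral in the exponential prefactor. The only cosmetic difference is packaging: the paper writes $d\ln\phi(z;c)=-\sum_k\omega_k(c)\varphi_k+\eta(z;c^+,c^-)$ explicitly and identifies only $e^{\int\eta}$ with a theta quotient (matching zeros, poles, $\beta$-periods and the value at $a_1$), whereas you absorb the same first-kind contribution into the quasi-periodicity multipliers of the theta quotient and verify their cancellation afterwards.
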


$\vartheta$ denotes the Riemann Theta function, i.e., for given constants $b_j,$
$j = 1,...,l-1$ and a given point $z_0$ on the Riemann surface $\frak{R}$ of
$y^2 = H$
$$ \vartheta(z; b_k) := \vartheta(z; b_1,...,b_{l-1}) :=
   \vartheta (\int_{z_0}^z \varphi_1 - b_1 - k_1, ... , \int_{z_0}^z
   \varphi_{l-1} - b_{l-1} - k_{l-1}) $$
where $k_1,...,k_{l-1}$ are the so-called Riemann-constants and $\varphi_j,$
$j = 1, ..., l-1,$ is a basis of normalized differentials of first kind,
see the beginning of the next Section.

Briefly and roughly speaking we derive the above statements in the following way. First we
consider the case when the weight function is a polynomial, which includes the particular
interesting case $W(x) \equiv 1$ and, what is important, can be treated with the
help of rational functions on the Riemann surface $y^2=H$. More precisely, we write
the system of equations \eqref{F1} in terms of Abelian integrals of first kind, see
\eqref{eq-xx6} below. The transformed system of equations guarantees by Abel's Theorem
the existence of a rational function ${\mathcal R}_n$ on the Riemann surface which is the
keystone in obtaining asymptotics for the minimal polynomials. In fact it turns out that
$R_n:= \mathcal{R}_n^+ + \mathcal{R}_n^-$, is a rational function on $\C$, which equioscillates
$n+1$ times on $E$ and thus has zero as best approximation with respect to any linear subspace
of dimension $n$. Showing that the rational function is asymptotically a polynomial of degree $n$
we obtain, using the Lipschitz continuity of the operator of best approximation
and deriving a so-called strong unicity constant, that this polynomial is asymptotically the
minimal polynomial.

To get the asymptotic representation for general weight functions we approximate
the weight function $W(x)$ by polynomials where it is important that the approximating
sequence of polynomials $(\rho_\nu)$ can be chosen such that the boundary value problem
$I^{+}(x) I^{-}(x) = W(x) / \rho_\nu(x)$ can be solved uniquely by a function $I(z)$
which is a nonvanishing, single valued, analytic function on $\overline{\mathbb C}
\backslash E.$

\section{Asymptotics with respect to rational weights}

In this section we derive an asymptotic formula for the minimal polynomial with respect to rational
weight functions of the form $1/\rho_\nu,$ where $\rho_\nu$ is a polynomial of degree $\nu$ which is positive on $E$,
which on the one hand contains the particular interesting case $W(x) \equiv 1$ (with geometric convergence of the error
even) and on the other hand is the basis to obtain asymptotics with respect to general weight functions roughly speaking
by approximating the weight function by a sequence of $1/\rho_\nu$'s. Hence in the following let
\begin{equation}
    \rho_\nu(x) = \pm \prod\limits_{j=1}^{\nu^{*}} (x - w_j)^{\nu_j} {\rm \ with \ } \rho_\nu > 0 {\rm \ on \ } E.
\end{equation}

         Let $\mathfrak R$ denote the hyperelliptic Riemann surface of genus $l-1$ defined
		 by $y^2 = H(z)$ with branch cuts $[a_1,a_2],
		 [a_2,a_3],\ldots,[a_{2l-1},a_{2l}].$ The two sheets of $\mathfrak R$ are
		 denoted by $\mathfrak R^+$ and $\mathfrak R^-$, where on  $\mathfrak R^+$ the
		 branch of $\sqrt{H(z)}$ is chosen for which $\sqrt{H(x)}>0$\, for $x >
		 a_{2l}$.
		 To indicate that $z$ lies on the first resp. second sheet we write $z^+$ and
		 $z^-$.
		 Furthermore let the cycles  $\{\alpha _j,\beta
		 _j\}_{j=1}^{l-1} $be the usual canonical
		 homology basis  on $\mathfrak R$, i.e., the curve $\alpha _j$ lies on the
		 upper sheet $\mathfrak R^+$ of $\mathfrak R$ and encircles there clockwise the interval
		 $E_j$ and the curve $\beta _j$ originates at $a_{2j}$
		 arrives at $a_{2l-1}$ along the upper sheet and turns back to $a_{2j}$ along
		 the lower sheet. $\mathfrak R' $ denotes now the simple connected canonical
		 dissected Riemann surface.  Let $\{ \varphi _1,\ldots,\varphi_{l-1}\}$,
		 where $\varphi _j = \sum^{l-1}_{s=1} d_{j,s} \frac{z^s}{\sqrt{H(z)}}dz,
		 d_{j,s}\in\C$, be a base of the normalized differential of the first kind i.e.
		 \begin{equation}
		 	\int_{\alpha _j}\varphi _k = 2\pi i\delta _{jk} \quad \text{and}\quad
		 	\int_{\beta_j}\varphi _k = B_{jk} \quad \text{for}\; j,k = 1,\ldots,l-1
		 	\label{eq-38}
		 \end{equation}
		 where $\delta_{jk}$ denotes the Kronecker symbol here. Note that the $d_{j,n}$'s
		 are real since $\sqrt{H(z)}$ is purely imaginary on $E_j$ and since
		 $\sqrt{H(z)}$ is real on $\R \setminus E$ the $B_{jk}$'s are also real. In the
         following $\eta  (P,Q) $ denotes the differential of the third kind which
		 has simple poles at P and Q with residues 1 and -1, respectively, and is
		 normalized such that
		 \begin{equation}
		 	\int_{\alpha_j}\eta  (P,Q) dz = 0\,\quad \text{for}\quad j =1,\ldots,l-1
		 	\label{eq-39}
		 \end{equation}

Next we are going to show that the system of equations \eqref{F1} has a unique
solution. This is done by writing \eqref{F1} in terms of Abelian-integrals
and considering the resulting system as a (real) Jacobi inversion problem,
see Lemma \ref{lemma2.3} a).

We need some preliminaries concerning the connection of harmonic measures and Abelian
integrals more or less known.

\begin{lemma}\label{lem1}
a) Let $c \in \bar{\mathbb C} \backslash E$ and $\bar{c}$ its conjugate number. Then the following
relation holds:
\begin{equation}\label{eq-sec2-x0}
   \int_{c^{-}}^{c^{+}} \varphi_j + \int_{\bar{c}^{-}}^{\bar{c}^{+}} \varphi_j
   = \sum\limits_{k=1}^{l-1} (\omega_k(c)+\omega_k(\bar{c})) B_{j k} {\rm \ \ mod \ \ } (2 B)
\end{equation}
b) Let $\log |W|$ be integrable. Then
\begin{equation*}
      \sum\limits_{\kappa = 1}^{l-1} \left( \frac{1}{2 \pi} \int_E \log |W(\xi)|
        \frac{\partial \omega_\kappa(\xi)}{\partial n^{+}_\xi} |d \xi| \right) B_{k \kappa} \\
    = - \frac{2}{\pi i} \int_{E^+} \varphi_k^{+} \log |W|
\end{equation*}
c)
\begin{equation*}
        \frac{2}{\pi i} \int_{E^+} \varphi_k^{+} \log |\rho_\nu|  =
        \sum\limits_{j=1}^{\nu^{*}} \nu_j ( \int_{\infty^{-}}^{\infty^{+}} \varphi_k -
        \int_{w_j^{-}}^{w_j^{+}} \varphi_k )
\end{equation*}
\end{lemma}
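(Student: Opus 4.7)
The three parts should be proved in order: (a) is the fundamental identity, (b) is obtained by integrating (a) against $\log|W|$ (or by a Green's-identity argument), and (c) is a direct specialization of (b) to the rational weight $\rho_\nu$.

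For (a), the main tool is Riemann's bilinear reciprocity relation on $\mathfrak{R}$: for the normalized third-kind differential $\eta(P,Q)$ of \eqref{eq-39} (simple poles of residues $\pm 1$ at $P,Q$, vanishing $\alpha$-periods) paired against the first-kind basis $\varphi_j$ of \eqref{eq-38},
$$ \int_{\beta_j}\eta(c^+,c^-) \;=\; 2\pi i \int_{c^-}^{c^+}\varphi_j \pmod{\text{periods}}. $$
I would apply this at $c$ and at $\bar c$ and add. The combined differential $\eta(c^+,c^-)+\eta(\bar c^+,\bar c^-)$ is invariant under $z\mapsto\bar z$ composed with sheet exchange, hence descends to a meromorphic differential on $\bar{\mathbb{C}}\setminus E$; its logarithmic primitive is, up to an additive constant, $\log\phi(z,c)+\log\phi(z,\bar c)$ by \eqref{eq-x3}. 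The $\beta_j$-period of the combined differential measures the jump of this primitive across the cut $(a_{2j},a_{2j+1})$; by the defining property of the harmonic measure as the phase-jump of $\phi$, this equals $2\pi i\sum_k(\omega_k(c)+\omega_k(\bar c))B_{jk}$ modulo $2B$, yielding (a).

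For (b), my plan is to apply a Green's-second-identity argument on the dissected surface $\mathfrak R'$, pairing the multi-valued harmonic function $\log|\mathcal W(z)|$ (which by \eqref{eq-x4} satisfies $u^+(x)+u^-(x)=2\log W(x)$ on $E$) against a primitive of $\varphi_k$. The boundary of $\mathfrak R'$ consists of the $E^\pm$-cuts and the $\alpha_j,\beta_j$-cuts; the former give rise to the RHS $-\frac{2}{\pi i}\int_{E^+}\varphi_k^+\log|W|$ (the factor of $2$ coming from $u^++u^-=2\log W$), while the periods of $\log|\mathcal W|$ across the $\alpha$- and $\beta$-cuts produce the LHS $\sum_\kappa(\cdots)B_{k\kappa}$ after identifying those periods with the boundary integrals $\int_E\log W\,\partial\omega_\kappa/\partial n\,|d\xi|$ via a Schwarz-reflection / Plemelj computation. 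Equivalently, one may take the limit $c=\xi\pm i0$ in (a), in which $\omega_\kappa(c)+\omega_\kappa(\bar c)\to 2\chi_{E_\kappa}(\xi)$, and integrate against $\log|W|$.

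For (c), substitute $\log|\rho_\nu(x)|=\sum_j\nu_j\log|x-w_j|$ into the RHS of (b), reducing to the computation of $\frac{2}{\pi i}\int_{E^+}\varphi_k^+\log|x-w|$ for a single $w\notin E$. Using (a) at $c=w$ and at $c=\infty$ --- together with the observation that $\log|x-w|$ is, up to a harmonic correction, the real part of the Abelian logarithm $\int d\log(x-w)$, whose period structure is controlled by (a) --- this integral evaluates to $\int_{\infty^-}^{\infty^+}\varphi_k-\int_{w^-}^{w^+}\varphi_k$. Summing with weights $\nu_j$ gives the formula of (c).

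The main obstacle I expect is part (a): keeping careful track of the periods modulo $2B$ and matching the $\beta_j$-period of the combined third-kind differential to the correct linear combination of harmonic measures, since the clockwise orientation of $\alpha_j$ on the upper sheet and the path-choice for $\beta_j$ (from $a_{2j}$ to $a_{2l-1}$ across sheets) dictate the signs. Once (a) is established, the Green's-identity argument for (b) is largely bookkeeping, and (c) is a direct specialization.
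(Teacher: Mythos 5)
Your overall architecture (bilinear relations and the Green's function for (a), a harmonic-function pairing for (b), specialization to $\log|x-w_j|$ for (c)) is parallel to the paper's, but part (a) as written contains a genuine error and omits the step that actually produces the congruence modulo $2B$. You identify the logarithmic primitive of $\eta(z;c^{+},c^{-})+\eta(z;\bar c^{+},\bar c^{-})$ with $\log\phi(z,c)+\log\phi(z,\bar c)$. This is false: by \eqref{eq-39} the normalized differential $\eta(z;c^{+},c^{-})$ has vanishing $\alpha$-periods, whereas $d\log\phi(z,c)$ has $\alpha_j$-period $-2\pi i\,\omega_j(c)$; the two differ by the first-kind part $\sum_k\omega_k(c)\varphi_k$, and that difference is precisely the content of the identity you are trying to prove. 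The paper's proof makes this explicit: it writes $dG(z;c)=\sum_k\mu_k\varphi_k+\eta(z;c^{+},c^{-})$, determines $\mu_k=-\omega_k(c)$ from the $\alpha$-periods (this, not any ``phase-jump across the $\beta_j$ cut,'' is where the harmonic measures enter), and then obtains from the bilinear relation \eqref{bil} that $\oint_{\beta_j}dG(\cdot;c)=-\sum_k\omega_k(c)B_{jk}+\int_{c^{-}}^{c^{+}}\varphi_j$. The second missing ingredient is the reason the relation holds modulo the \emph{real} lattice $2B$ rather than merely modulo $2\pi i\,\mathbb Z^{l-1}+B\,\mathbb Z^{l-1}$: the paper shows, via the explicit formula $r_c(\xi)=(\xi-c)(r_\infty(\xi)-M_c(\xi))-\sqrt{H(c)}$ and $M_{\bar c}=\overline{M_c}$, that $\oint_{\beta_j}d\bigl(G(\cdot,c)+G(\cdot,\bar c)\bigr)$ is purely imaginary (the integrand is purely imaginary on $E$ and the gap contributions cancel by orientation), while the bilinear-relation expression for it is real; hence it vanishes modulo the periods. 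This reality argument is the whole point of pairing $c$ with $\bar c$, and without it the statement you need for the real Jacobi inversion problem downstream does not follow. Your phrase ``by the defining property of the harmonic measure as the phase-jump of $\phi$'' asserts the conclusion rather than deriving it.

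Parts (b) and (c) are closer to workable. For (b) the paper does not use Green's second identity: it observes that the analytic completions $w_k=\omega_k+i\omega_k^{*}$ (extended to the second sheet by $-\omega_k+i\omega_k^{*}$) form another basis of first-kind integrals, computes the change of basis to be $\int\varphi_j=-\tfrac12\sum_k w_k B_{jk}$ by comparing $\beta$-periods, and then uses $\tfrac1i\,dw_k=\frac{\partial\omega_k}{\partial n}\,ds$ on $E^{+}$ to get the pointwise identity \eqref{pf-t2}, which is then integrated against $\log|W|$. Your Green's-identity route should also work, but note that your fallback --- letting $c\to\xi\pm i0$ in (a) and integrating against $\log|W|$ --- degenerates: in that limit (a) reduces to $2B_{j\kappa}\equiv 2B_{j\kappa}$ modulo $2B$ and carries no information about $\partial\omega_\kappa/\partial n$. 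For (c) your plan coincides in substance with the paper's, which invokes the classical representation $\omega_k(z)-\omega_k(\infty)=\frac{1}{2\pi}\int_E\frac{\partial\omega_k(\xi)}{\partial n_\xi^{+}}\log|\xi-z|\,|d\xi|$ and then applies (a) and \eqref{pf-t2}; you should state that identity explicitly rather than appealing to ``a harmonic correction.''
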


\begin{proof}
a) Let us represent the Green's functions $G(z;c) = \ln \phi(z;c),$ extended
analytically to the Riemann surface by $\phi(\tilde{z};c) = 1/ \phi(z;c)$ in
the form
\begin{equation}\label{eq-sec2-x1}
    d G(z;c) = \sum\limits_{k=1}^{l-1} \mu_k \varphi_k + \eta(z;c^{+},c^{-})
\end{equation}
which implies by integrating along the $\alpha_j$-cycles
\begin{equation}\label{eq-sec2-x2}
    - 2 \pi i \omega_j(c) = \oint_{\alpha_j} d G(z;c) = \sum\limits_{k=1}^{l-1} \mu_k \delta_{jk} 2 \pi i
\end{equation}
where the first equality follows by the representation of the harmonic measure in terms of Green's function,
see e.g. \cite{Neh}, and the last equality by the normalizations of the differentials.

Integrating along the $\beta_j$-cycle and using the bilinear relation for abelian differentials
of the first and third kind, i.e.,
\begin{equation}\label{bil}
   \oint_{\beta_j} \eta(X,Y) = \int_X^Y \varphi_j
\end{equation}
we obtain
\begin{equation}\label{eq-t1}
    \oint_{\beta_j} d G(z;c) = - \sum\limits_{k=1}^{l-1} \omega_k(c) B_{jk} + \int_{c^{-}}^{c^{+}} \varphi_j
\end{equation}
Next let us note that the polynomial $r_c(\xi)$ from \eqref{eq-x3} can be represented in the form
$$ r_c(\xi) = (\xi - c) (r_\infty(\xi) - M_c(\xi)) - \sqrt{H(c)} $$
where $M_c(\xi) = \xi^{l-1} + ... \in {\mathbb P}_{l-1}$ is the unique polynomial which satisfies
$$ \fpint{a_{2j}}{a_{2j+1}} {\frac{\sqrt{H(c)}}{c - x} \frac{d x}{\sqrt{H(x)}} }=
   \int_{a_{2j}}^{a_{2j+1}} M_c(x) \frac{d x}{\sqrt{H(x)}} {\rm \ for \ } j= 1,...,l$$
Hence we obtain that
$$ M_{\bar{c}}(\xi) = \overline{M_c(\xi)}$$
and
$$ d( G(z,c) + G(z,\bar{c}) ) = \frac{r_c(\xi)}{\xi-c} +
   \frac{\overline{r_c(\xi)}}{\xi- \bar{c}} \frac{d\xi}{\sqrt{H(\xi)}}$$
which implies that the integral $\int_{\beta_j} d(G(z,c) + G(z,\bar{c}))$ is purely imaginary,
since the integrand becomes purely imaginary when $\xi$ approaches $E$ and the integrals over
the gaps cancel out by the opposite opposite direction of integration. On the other hand it
follows by \eqref{eq-t1} that the integral is real and thus relation \eqref{eq-sec2-x0} is proved.

b)  Denote by
$$ w_k(z) = \omega_k(z) + i \omega^{*}_k(z) $$
the analytic extension of the harmonic measure on $\overline{\mathbb C}$ and, as usual,
let
\begin{equation}\label{extension}
    w_k = - \omega_k + i \omega_k^{*}
\end{equation}
be the extension to the second sheet. Since $w_k, k = 1,...,l-1,$ is just another basis of Abelian
differentials of first kind we may represent $\int \varphi_j$ as linear combinations of the
$w_k$'s. Integrating along the $\beta_k$ cycle, $ k \in \{ 1,...,l-1 \},$ and recalling the fact
that the integral along a $\beta_k$-cycle is the difference of values along the $\alpha_k$ cycle
(which is $E_k$) we obtain with the help of \eqref{extension} that
\begin{equation}\label{pf-t1}
   \int \varphi_j = - \frac{1}{2} \sum\limits_{k=1}^{l-1} w_k B_{j k}
\end{equation}
Using again the fact that $\omega_k(z) = 1$ on $E_k,$ $k \in \{ 1,...,l \},$ we obtain that
$$ \frac{1}{i} w_k^{'} dz = \frac{1}{i} d w_k(z) = \frac{\partial \omega_k}{\partial n} ds $$
when we approach from the upper half plane to $E,$ hence by \eqref{pf-t1}
\begin{equation}\label{pf-t2}
    - \frac{2}{\pi i} \varphi^{+}_j
    =
      \frac{1}{\pi} \sum\limits_{k=1}^{l-1}
      \left( \frac{\partial \omega_k }{ \partial n^{+} } ds \right) B_{k j}
\end{equation}
from which part b) follows.

Concerning part c) recall that, consider the periods of the harmonic
function $g(\xi,z) - g(\xi, \infty) + \log |\xi - z|,$
$$ \omega_k(z) - \omega_k(\infty) = \frac{1}{2 \pi} \int_E
\frac{\partial \omega_k(\xi)}{\partial n^{+}_\xi} \log | \xi - z| |d \xi|$$
Using \eqref{eq-sec2-x0} and \eqref{pf-t2} the equality follows.
\end{proof}

\begin{notation}
By ${\rm Jac \ } \mathcal R$ we denote the Jacobi variety of $\mathcal R,$
that is, the quotient space ${\mathbb C}^{l - 1} / (2 \pi i \vec{n} + \vec{B} \vec{m}),$
$B = (B_{jk})$ the matrix of periods, $\vec{n},\vec{m} \in {\mathbb Z}^{l - 1}.$
${\rm Jac \ } \mathcal R / {\mathbb R} := {\mathbb R}^{l - 1}/ B \vec{m}$ denotes
the Jacobi variety restricted to the reals. Finally, let $[a_{2 j}, a_{2 j + 1}]^{\pm}$
denote the two copies of $[a_{2j},a_{2 j + 1}],$ $j = 1,...,l-1,$ in ${\mathcal R}^{\pm}.$
Note that $[a_{2j},a_{2j + 1}]^{+} \cup [a_{2j},a_{2j + 1}]^{-}$ is a closed loop on
$\mathcal R.$
\end{notation}

The following important statement holds, see e.g., \cite[Theorem 5.12]{Kre-Lev-Nud}
and \cite[Lemma 3.5 (a)]{PehIMNR}:

The restricted Abel map
\begin{equation*}
\begin{aligned}
    \mathcal A : & {\sf{X}}_{j = 1}^{l-1} ([a_{2j},a_{2j +1}]^{+} \cup [a_{2j},a_{2j +1}]^{-})
                  \to {\rm Jac \ } {\mathcal R} / {\mathbb R}  \\
                 & \ \ \ \ \ (\zeta_1,...,\zeta_{l - 1})
                 \mapsto
                             \frac{1}{2}
                             \left(
                                   \sum_{j = 1}^{l - 1} \int_{\zeta_j^{*}}^{\zeta_j} \varphi_1,...,
                                   \sum_{j = 1}^{l - 1} \int_{\zeta_j^{*}}^{\zeta_j} \varphi_{l - 1}
                             \right) \\
\end{aligned}
\end{equation*}
is a holomorphic bijection. Moreover the so called real Jacobi inversion problem has a unique solution,
that is, for given $(\eta_1,...,\eta_{l-1}) \in {\mathbb R}^{l-1}$ there exists a unique
$(\zeta_1,...,\zeta_{l-1}) \in {\sf{X}}_{j = 1}^{l-1} ([a_{2j},a_{2j +1}]^{+} \cup [a_{2j},a_{2j +1}]^{-})$
such that
\begin{equation*}
    \frac{1}{2} \sum_{j = 1}^{l - 1} \int_{\zeta_j^{*}}^{\zeta_j} \varphi_k = \eta_k {\rm \ \ mod \ } (B).
\end{equation*}

\begin{lemma}\label{lemma2.2}
a) Let ${\boldsymbol s} \in {\mathbb R}^{l - 1}$ and ${\boldsymbol \delta} \in ( \{-1,1\} )^{l - 1}$ be
given. Then there exists a unique $(\kappa_1,...,\kappa_{l-1}) \in \sf{X}_{j=1}^{l-1} {\mathcal R}^{\delta_j}$
and a unique $\vec{\boldsymbol \sigma} \in (\{ 0,1 \})^{l - 1}$ such that
\begin{equation*}
    \frac{1}{2} \sum_{j=1}^{l-1} \int_{\kappa_j^{*}}^{\kappa_j} \varphi_k =
    s_k + \sum_{j=1}^{l-1} \frac{\sigma_j}{2} B_{kj} {\rm \ \ modulo \ } (B) {\rm \ for \ } k = 1,..., l-1
\end{equation*}

b) Choosing in a) $\delta_j = 1$ for $j = 1,...,l-1$ the representation
\begin{equation}\label{ttt0}
    \frac{1}{2} \sum_{j=1}^{l-1} \int_{\kappa_j^{*}}^{\kappa_j} \varphi_k =
    s_k + \sum_{j = 1}^{l - 1} \frac{\sigma_j}{2} B_{kj} = \sum_{j=1}^{l-1} t_j B_{kj} {\rm \ \ mod \ } (B)
\end{equation}
with
\begin{equation}\label{ttt}
    t_\kappa = \frac{1}{2} \sum_{j=1}^{l-1} \omega_\kappa ({\rm pr} (\kappa_j)) \in [0,\frac{1}{2}] {\rm \ for \ }
    \kappa = 1,..., l-1
\end{equation}
holds.

Choosing in a) $\delta_j = -1$ for $j = 1,..., l-1$ then $t_j$ has a representation of the form \eqref{ttt}
with $t_\kappa \in [-\frac{1}{2},0]$ for $\kappa = 1,...,l-1.$
\end{lemma}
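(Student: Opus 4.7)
\textbf{Plan for Lemma \ref{lemma2.2}.}

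The strategy is to bootstrap from the real Jacobi inversion theorem recalled immediately above --- bijectivity of the restricted Abel map on the doubled domain $D:=\mathsf{X}_{j=1}^{l-1}([a_{2j},a_{2j+1}]^+\cup[a_{2j},a_{2j+1}]^-)$ --- to the sheet-prescribed statement in part a) by absorbing the sheet freedom into the discrete parameter $\boldsymbol{\sigma}\in\{0,1\}^{l-1}$ via half-period shifts of $B$. Part b) is then a direct computation with Lemma \ref{lem1}\,a), converting each $\int_{\kappa_j^*}^{\kappa_j}\varphi_k$ into a harmonic-measure sum.

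For part a), I would for each $\boldsymbol{\sigma}\in\{0,1\}^{l-1}$ apply the real Jacobi inversion theorem to the shifted target $s_k+\sum_j \tfrac{\sigma_j}{2}B_{kj}$, obtaining a unique preimage $\boldsymbol{\zeta}^{(\boldsymbol{\sigma})}\in D$ whose sheet signature $\boldsymbol{\epsilon}^{(\boldsymbol{\sigma})}\in\{-1,+1\}^{l-1}$ is then read off. The crux is to verify that the map $\boldsymbol{\sigma}\mapsto\boldsymbol{\epsilon}^{(\boldsymbol{\sigma})}$ is a bijection $\{0,1\}^{l-1}\to\{-1,+1\}^{l-1}$; granted this, the unique $\boldsymbol{\sigma}$ with $\boldsymbol{\epsilon}^{(\boldsymbol{\sigma})}=\boldsymbol{\delta}$ yields the claimed pair $(\boldsymbol{\kappa},\boldsymbol{\sigma})$ on the prescribed sheets, and uniqueness is inherited from the Abel inversion.

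For part b), take $\delta_j=+1$ for all $j$ and set $c_j:=\mathrm{pr}(\kappa_j)\in[a_{2j},a_{2j+1}]$, so $\kappa_j=c_j^+$, $\kappa_j^*=c_j^-$. Applying Lemma \ref{lem1}\,a) at the real point $c=c_j$ (where $\bar c=c$ and the left-hand side collapses to twice a single integral) gives $\int_{\kappa_j^*}^{\kappa_j}\varphi_k\equiv\sum_\iota\omega_\iota(c_j)B_{k\iota}\pmod B$. Summing over $j$ and dividing by two,
\[
\tfrac12\sum_{j=1}^{l-1}\int_{\kappa_j^*}^{\kappa_j}\varphi_k \;\equiv\; \sum_\iota t_\iota B_{k\iota}\pmod{\tfrac12 B},
\qquad t_\iota=\tfrac12\sum_{j=1}^{l-1}\omega_\iota(c_j),
\]
and the residual half-period ambiguity is exactly the $\sigma_\iota$-shift supplied by part a). Reducing $t_\iota$ modulo $\tfrac12$ yields the canonical representative in $[0,\tfrac12]$, with the $\sigma_\iota$'s from part a) absorbing the integer parts. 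The case $\delta_j=-1$ for all $j$ is the same calculation with each integral reversed, giving the $[-\tfrac12,0]$ representation.

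The main obstacle is the bijectivity $\boldsymbol{\sigma}\mapsto\boldsymbol{\epsilon}^{(\boldsymbol{\sigma})}$ in part a): one must check that the $(\mathbb{Z}/2\mathbb{Z})^{l-1}$-action on $\mathrm{Jac}\,\mathcal{R}/\mathbb{R}$ by the half-periods $\tfrac12\sum_j\sigma_j B_{\cdot,j}$ permutes the $2^{l-1}$ sheet sectors of the restricted Abel map freely and transitively. This does not follow from the Abel--Jacobi theorem alone; it can be established by using Lemma \ref{lem1}\,a) to parametrize the image of the all-$+$ sector in terms of the $\omega_\iota(c_j)B_{k\iota}$ and checking directly --- as in the $l=2$ case, where the upper-sheet image is exactly $[0,B_{11}/2]\subset\mathbb{R}/B_{11}\mathbb{Z}$ and the half-period shift translates it onto $[B_{11}/2,B_{11}]$ --- that the $2^{l-1}$ translates disjointly tile the real torus.
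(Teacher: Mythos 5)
Your reduction is the same as the paper's: invoke the real Jacobi inversion theorem for the $2^{l-1}$ shifted targets $s_k+\sum_j\tfrac{\sigma_j}{2}B_{kj}$ and show that the resulting solutions realize each sheet signature exactly once; and your part b) (apply Lemma \ref{lem1} a) at a real gap point $c=\bar c$ to turn each $\int_{\kappa_j^*}^{\kappa_j}\varphi_k$ into $\sum_\iota\omega_\iota(c_j)B_{k\iota}$, then use $\omega_\iota\geq 0$) matches the paper's. The gap lies in the crux you yourself isolate. You propose to verify the bijectivity of $\boldsymbol\sigma\mapsto\boldsymbol\epsilon^{(\boldsymbol\sigma)}$ by showing that ``the $2^{l-1}$ translates disjointly tile the real torus,'' extrapolating from $l=2$. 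But for $l\geq 3$ the images of the sheet sectors under the restricted Abel map are \emph{not} half-period translates of one another: flipping the sheet of the $p$-th component replaces the summand $+\tfrac12\omega_\cdot(\mathrm{pr}(\kappa_p))$ by $-\tfrac12\omega_\cdot(\mathrm{pr}(\kappa_p))$, so the change is $-\omega_\cdot(\mathrm{pr}(\kappa_p))$, which depends on the point $\kappa_p$ --- a reflection of one coordinate curve, not a fixed translation by $\tfrac12 B_{\cdot p}$. In $l=2$ the reflected and the translated copies of $[0,\tfrac12 B_{11}]$ happen to coincide as subsets of $\mathbb R/B_{11}\mathbb Z$, which is why your picture works there, but the argument does not generalize as stated.

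The paper closes this step by a different mechanism: it passes to the canonical homology basis $\{\alpha_j',\beta_j'\}$ in which the $\alpha_j'$-cycles are the gap loops $[a_{2j},a_{2j+1}]^+\cup[a_{2j},a_{2j+1}]^-$ (the two period matrices differ by an integer unimodular transformation, so nothing changes modulo periods). In that model, adding the half period $\tfrac12(B')_p$ to the target displaces only the $p$-th solution point to its involute $\xi_p^*$ (or, if $\xi_p$ is a branch point, to the other end of its gap) while leaving the other components fixed; applying this to the offending indices produces the required $\boldsymbol\sigma$, and uniqueness is inherited from the inversion theorem. To repair your argument you would need either to import this displacement property (the paper takes it from Krein--Levin--Nudelman) or to prove directly that each sheet sector maps bijectively onto $\mathbb R^{l-1}/\tfrac12 B\mathbb Z^{l-1}$; neither is supplied by the tiling heuristic. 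A secondary point: in part b) you cannot simply ``reduce $t_\iota$ modulo $\tfrac12$,'' since \eqref{ttt} asserts the exact identity $t_\kappa=\tfrac12\sum_j\omega_\kappa(\mathrm{pr}(\kappa_j))$ \emph{together with} $t_\kappa\in[0,\tfrac12]$; the upper bound requires the a priori fact that $\sum_{j}\omega_\kappa(c_j)\leq 1$ for points $c_j$ chosen one per gap, which the paper invokes from its description of the image of the restricted Abel map rather than obtaining by reduction.
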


\begin{proof}
The simplest way to prove part a) is to change the model of the Riemann surface by choosing
now as in \cite[Section 5.5.2]{Kre-Lev-Nud} the canonical homology basis
$\{ \alpha^{'}_j, \beta^{'}_j \}_{j=1}^{l-1},$ where the curve
$\alpha^{'}_j$ originates at $a_{2j},$ arrives at $a_{2 j + 1}$ along the upper
sheet and turns back to $a_{2j}$ along the lower sheet and $\beta^{'}_j$
lies in the upper sheet and encircles the interval $[a_1,a_{2 j}]$ clockwise.
Note that the $\alpha^{'}$ and $\beta^{'}$ periods can be expressed
easily in terms of the $\alpha$ and $\beta$ periods from \eqref{eq-38}. More precisely,
the period matrix of the new model is obtained by a linear transformation with entire
coefficients of the original model. Thus modulo periods it does not matter
which model we take.

We know that there exists an unique solution $\vec{\xi} \in \sf{X}_{j=1}^{l-1} {\mathcal R},$
see \cite[Theorem 5.12]{Kre-Lev-Nud} and \cite[Lemma 3.5 (a)]{PehIMNR}, such that
\begin{equation}\label{eqx}
  \frac{1}{2} \sum_{j=1}^{l-1} \int_{\xi^{*}_j}^{\xi_j} \varphi_k =
  s_k {\rm \ \ for \ \ } k = 1,..., l-1 {\rm \ \ mod \ \ } B^{'}
\end{equation}
If $\vec{\xi} \in \sf{X}_{j=1}^{l-1} {\mathcal R}^{\delta_j}$ we are done.
If $\xi_{p} \in {\mathcal R}^{- \delta_p}$ then we consider the inversion problem
for $(s_1 + B^{'}_{\frac{p_1}{2}},...,s_{l-1} + B^{'}_{\frac{p_{l-1}}{2}})$
that is, we add the half period $(B^{'})_p.$ Then for $j \neq p$ all
points $\xi_j$ from \eqref{eqx} remain in place and $\xi_p$ will be displaced
by $\xi_p^{*}$ if $\xi_p \notin \{ a_j \}_{j=1}^{2 l}.$ If $\xi_p \in \{ a_j \}_{j=1}^{2 l}$
then this point is displaced from one end of $(a_{2 j},a_{2 j + 1})$ to the other.
Thus part a) follows by going back to our model.

b) By \eqref{} we know that \eqref{ttt0} holds with $t_j \in [-1/2,1/2].$ Taking
into consideration the fact that $\kappa_j = x_j^{+}$ and $\kappa_j^{*} = x_j^{-}$
and writing the LHS of \eqref{ttt0} in terms of harmonic measures with the help of
\eqref{eq-sec2-x0} it follows that
\begin{equation*}
   t_k = \frac{1}{2} \sum_{j=1}^{l-1} \omega_k ({\rm pr}(\kappa_j)) \geq 0
\end{equation*}
which proves part b).
\end{proof}

\begin{lemma}\label{lemma2.3}
a) The unique solution $(\zeta_{1,n},...,\zeta_{l-1,n})$ of the Jacobi inversion
problem, $\kappa = 1,...,l-1,$
\begin{equation}\label{eq-lemma2.3-x}
    \sum_{j=1}^{l - 1} \int_{\zeta_{j,n}^{*}}^{\zeta_{j,n}} \varphi_\kappa
  = n \int_{\infty^{-}}^{{\infty}^{+}} \varphi_\kappa +
    \frac{2}{\pi i} \int_{E^{+}} \varphi_\kappa^{+} \log |W| +
    \sum_{j=1}^{l-1} \sigma_{j,n}(W) B_{\kappa j} {\rm \ \ modulo \ } 2 (B)
\end{equation}
where $\sigma_{j, n}(W)$ is given by \eqref{insert_eq}, has the property that
$\zeta_{j,n} \in {\mathcal R}^{+}$ for $j = 1,..., l-1.$ Moreover,
putting $\zeta_{j,n} = c^{+}_{j,n},$ hence $\zeta^{*}_{j,n} = c^{-}_{j,n}$
and using Lemma \ref{lem1} the system of equations \eqref{eq-lemma2.3-x}
becomes \eqref{F1}.

b) The sequence of solutions $({\boldsymbol c}^{+}_{n_\nu})_{\nu \in \mathbb N}$
of \eqref{eq-lemma2.3-x} converges if and only if
$({\boldsymbol \gamma}_{n_\nu}(W))_{\nu \in \mathbb N}$ converges modulo $2,$
where ${\boldsymbol \gamma}_{n_\nu}(W)$ is given in \eqref{insert_eq}.
Furthermore, the transformed Abel map $\tilde{\mathcal A} = B^{-1} \circ {\mathcal A}$
is a real analytic homeomorphism between the sets of accumulation points
of $({\boldsymbol c}^{+}_{n})_{n \in \mathbb N}$ and
$({\boldsymbol \gamma}_{n}(W))_{n \in \mathbb N}$ modulo $2.$
\end{lemma}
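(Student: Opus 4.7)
The plan for part a) is to rewrite the RHS of \eqref{eq-lemma2.3-x} in terms of the period matrix $B$ so that Lemma~\ref{lemma2.2} b) applies directly. First I would use Lemma~\ref{lem1} a) at $c=\bar c=\infty$ to obtain $\int_{\infty^-}^{\infty^+}\varphi_\kappa\equiv\sum_{j}\omega_j(\infty)B_{\kappa j}\pmod{B}$, and Lemma~\ref{lem1} b) (using $W>0$, so $\log W=\log|W|$) to rewrite $\frac{2}{\pi i}\int_{E^+}\varphi_\kappa^{+}\log W$ as a linear combination of the $B_{\kappa j}$ with coefficients proportional to $\frac{1}{2\pi}\int_E\log W\,\frac{\partial\omega_j}{\partial n_\xi^{+}}|d\xi|$. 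Adding the $\sum_j\sigma_{j,n}B_{\kappa j}$ term and dividing by two brings \eqref{eq-lemma2.3-x} into the form
\[
    \frac{1}{2}\sum_{j=1}^{l-1}\int_{\zeta_{j,n}^{*}}^{\zeta_{j,n}}\varphi_\kappa \equiv \sum_{j=1}^{l-1}\frac{\gamma_{j,n}(W)}{2}\,B_{\kappa j}\pmod{B}.
\]
The choice of $\sigma_{j,n}$ made in the notation is exactly what forces $\gamma_{j,n}(W)\in[0,1]$ modulo $2$, hence $\gamma_{j,n}(W)/2\in[0,1/2]$.

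Now I would invoke Lemma~\ref{lemma2.2} b) with $\delta_j=1$ for all $j$: since the coefficients $t_j:=\gamma_{j,n}(W)/2$ already lie in the ``upper sheet'' range $[0,1/2]$, the unique solution of the Jacobi inversion problem must satisfy $\zeta_{j,n}=c_{j,n}^{+}\in\mathcal R^{+}$ with $c_{j,n}\in[a_{2j},a_{2j+1}]$. Moreover the second identity in Lemma~\ref{lemma2.2} b) reads $\gamma_{\kappa,n}(W)/2 = \frac{1}{2}\sum_j\omega_\kappa(c_{j,n})$ modulo $1$; multiplying by two this is $\sum_j\omega_\kappa(c_{j,n})=\gamma_{\kappa,n}(W)$ modulo $2$, which is precisely \eqref{F1}. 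This finishes part a).

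For part b), the restricted Abel map $\mathcal A$, recalled just before Lemma~\ref{lemma2.2}, is a holomorphic bijection onto $\mathrm{Jac}\,\mathcal R/\mathbb R$ and in particular a homeomorphism. Hence convergence of the preimages $\boldsymbol\zeta_{n_\nu}$ is equivalent to convergence of their images under $\mathcal A$. By part a), $\mathcal A(\boldsymbol c_{n_\nu}^{+}) \equiv \frac{1}{2}\,B\,\boldsymbol\gamma_{n_\nu}(W)$ modulo $B$, which converges in $\mathrm{Jac}\,\mathcal R/\mathbb R$ if and only if $\boldsymbol\gamma_{n_\nu}(W)$ converges modulo $2$. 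Composing with $B^{-1}$ and using Lemma~\ref{lemma2.2} b) gives $\tilde{\mathcal A}(\boldsymbol c)=\frac{1}{2}\bigl(\sum_j\omega_1(c_j),\ldots,\sum_j\omega_{l-1}(c_j)\bigr)$, matching the map introduced in Theorem~\ref{thm2}. As a composition of a holomorphic bijection with the invertible linear map $B^{-1}$, $\tilde{\mathcal A}$ is a real analytic homeomorphism, which then restricts to a homeomorphism between the accumulation sets of $(\boldsymbol c_n^{+})_{n\in\mathbb N}$ and of $(\boldsymbol\gamma_n(W))_{n\in\mathbb N}$ modulo $2$.

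The main technical obstacle is the sign and modulus bookkeeping in the translation of \eqref{eq-lemma2.3-x} into the above $B$-linear form: everything relies on the fact that the integer ambiguity $\sigma_{j,n}$ has been pinned down in the notation so as to place $\gamma_{j,n}(W)/2$ into $[0,1/2]$, which is exactly the upper-sheet range furnished by Lemma~\ref{lemma2.2} b); once the signs coming from Lemma~\ref{lem1} a) and b) are correctly tracked, the identification with \eqref{F1} is automatic.
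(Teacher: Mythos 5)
Your overall route is the paper's route: translate the right--hand side of \eqref{eq-lemma2.3-x} into the form $\sum_j \frac{\gamma_{j,n}}{2}B_{\kappa j}$ via Lemma \ref{lem1}, exploit the normalization of $\sigma_{j,n}$ in \eqref{insert_eq}, and then obtain part b) from the bijectivity of the restricted Abel map exactly as in the paper. Part b) and the identification with \eqref{F1} are fine.

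There is, however, a gap at the central step of part a). You write that ``since the coefficients $t_j:=\gamma_{j,n}(W)/2$ already lie in the upper-sheet range $[0,1/2]$, the unique solution must satisfy $\zeta_{j,n}\in\mathcal R^{+}$,'' citing Lemma \ref{lemma2.2} b). But Lemma \ref{lemma2.2} b) only gives the implication in the opposite direction: \emph{if} all $\kappa_j$ lie on $\mathcal R^{+}$, \emph{then} the target has a representation $\sum_j t_jB_{kj}$ with $t\in[0,1/2]^{l-1}$. What you need is the converse — that a target of this form cannot be hit by a mixed-sheet configuration — and that is not contained in the statement of the lemma: for $\kappa_j=x_j^{\epsilon_j}$ with mixed signs the corresponding coefficient vector is $\frac{1}{2}\sum_j\epsilon_j\bigl(\omega_1(x_j),\dots,\omega_{l-1}(x_j)\bigr)$, which as a vector of real numbers can perfectly well land in $[0,1/2]^{l-1}$, so the inclusion of the target in that range does not by itself locate the preimage. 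The paper closes exactly this gap with a short contradiction argument that you should supply: assume some $\zeta_{p,n}\notin\mathcal R^{+}$; by Lemma \ref{lemma2.2} a) there is an all-upper-sheet solution $\tilde{\boldsymbol\zeta}$ of the problem shifted by a half-period $\frac{1}{2}B\tilde{\boldsymbol\sigma}$ with $\tilde{\boldsymbol\sigma}\in\{0,1\}^{l-1}$, $\tilde{\boldsymbol\sigma}\neq{\boldsymbol\sigma}_n(W)$; applying Lemma \ref{lem1} and \eqref{ttt0}--\eqref{ttt} to the shifted problem yields ${\boldsymbol\gamma}_n(W)-{\boldsymbol\sigma}_n(W)+\tilde{\boldsymbol\sigma}\in[0,1]^{l-1}$ modulo $2$, and the uniqueness of ${\boldsymbol\sigma}_n(W)$ in \eqref{insert_eq} then forces $\tilde{\boldsymbol\sigma}={\boldsymbol\sigma}_n(W)$, a contradiction. (Equivalently: adding a nonzero half-period to a target with $t\in(0,1/2)^{l-1}$ pushes the coefficient vector out of $[0,1/2]^{l-1}$ modulo $\mathbb Z^{l-1}$, so only the unshifted problem can have an all-upper-sheet solution.) You should also note the boundary case $\gamma_{j,n}\in\{0,1\}$, where $c_{j,n}$ is a branch point and the sheet assignment is immaterial. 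With this addition your argument coincides with the paper's.
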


\begin{proof}
Assume that not all $\zeta_{j,n}$'s are from ${\mathcal R}^{+}.$
Then by Lemma \ref{lemma2.2} there is a $\tilde{\boldsymbol \sigma} \in \{0,1 \}^{l-1},$
$\tilde{\boldsymbol \sigma} \neq {\boldsymbol \sigma}_n(W),$
and a $\tilde{\boldsymbol \zeta} \in \sf{X}_{j=1}^{l-1} {\mathcal R}^{+}$ which
solves the correspondingly modified Jacobi inversion problem \eqref{eq-lemma2.3-x}.
Applying Lemma \ref{lem1} to the RHS of \eqref{eq-lemma2.3-x} and using representation
\eqref{ttt0} with property \eqref{ttt} it follows that
${\boldsymbol \gamma}_n(W) - {\boldsymbol \sigma}_n(W) + \tilde{\boldsymbol \sigma} \in [0,1]^{l - 1}$
modulo $2.$ But this implies by the definition of ${\boldsymbol \sigma}_n(W)$ and
its uniqueness, recall \eqref{insert_eq}, that
${\boldsymbol \sigma}_n(W) = \tilde{\boldsymbol \sigma},$
which is a contradiction.

With the help of Lemma \ref{lem1} it follows by straightforward
calculation that \eqref{eq-lemma2.3-x} is equivalent to \eqref{F1}.

b) Writing relation \eqref{eq-lemma2.3-x} in the form
\begin{equation*}
    2 \left( {\mathcal A}(({\boldsymbol c}^{+})_{n_\nu}) \right)^{t} =
    B {\boldsymbol \gamma}^{t}_{n_\nu}
\end{equation*}
where $t$ denotes the transpose of the vectors, the assertion follows immediately
by the bijectivity and the other properties of the Abel map $\mathcal A.$
\end{proof}

\begin{thm}\label{thm1.6}
Let $c_{j,n} \in [a_{2j},a_{2j+1}],$ $j=1,...,l-1,$ be such that
\begin{equation}\label{eq-xx3}
    \sum\limits_{j=1}^{l-1} \omega_k(c_{j,n}) = {\boldsymbol \gamma}_n(1/\rho_\nu)
\end{equation}
and suppose that for $n \in \mathbb M \subseteq \mathbb N$ there holds
$c_{j,n} \in [a_{2j}+\delta, a_{2j+1}-\delta],$ $\delta > 0,$ $j = 1,...,l-1.$ Then
for $n \in \mathbb M$ the following asymptotic representation holds uniformly on $E$
\begin{equation}\label{eq-xx4}
   \frac{2 M_n(x;\rho_\nu)}{\rho_\nu(x)} = \left( {\mathcal R}_n^{+}(x; \rho_\nu) +
   \frac{1}{{\mathcal R}_n^{+}(x; \rho_\nu)} \right) + O(q^n),
\end{equation}
where
\begin{equation}\label{eq-xx5}
    {\mathcal R}_n^{+}(x; \rho_\nu) = \frac{(\phi^{+}(x;\infty))^{n - \nu}
    \prod\limits_{j=1}^{\nu^{*}} (\phi^{+}(x;w_j))^{\nu_j}}{\prod\limits_{j=1}^{l-1} \phi^{+}(x;c_{j,n})}
\end{equation}
and where $q \in (-1,1)$ and the constant in the $O$-term does not depend on $n$ and $x,$
$x \in E.$
\end{thm}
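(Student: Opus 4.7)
The plan is to construct an explicit near-minimizer of the Chebyshev problem on $E$ by lifting to the Riemann surface $\mathfrak R$, and then to transfer asymptotic information to $\hat M_n(\cdot;\rho_\nu)$ via the equioscillation theorem and Lipschitz stability of best approximation.

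I first verify that $\mathcal R_n$ defined by \eqref{eq-xx5} is a single-valued meromorphic function on $\mathfrak R$. Specializing Lemma \ref{lemma2.3} to $W = 1/\rho_\nu$ and applying Lemma \ref{lem1}(c) to rewrite $\frac{2}{\pi i}\int_{E^+}\varphi_k^+\log(1/\rho_\nu)$ as a sum of Abelian integrals between $\infty^\pm$ and $w_j^\pm$, the hypothesis \eqref{eq-xx3} becomes precisely the Abel relation for the degree-zero divisor with multiplicities $n-\nu$, $\nu_j$, $-1$ at $\infty^\pm$, $w_j^\pm$, $c_{j,n}^\pm$; Abel's theorem then provides such an $\mathcal R_n$. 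Each Green factor has unit modulus boundary value on $E$, so $|\mathcal R_n^\pm(x)| = 1$ and $\mathcal R_n^-(x) = 1/\mathcal R_n^+(x)$ for $x\in E$; consequently
\[
R_n(z) := \mathcal R_n^+(z) + \mathcal R_n^-(z)
\]
is invariant under the sheet involution, hence rational on $\overline{\mathbb C}$, and $R_n(x) = 2\,\Re\,\mathcal R_n^+(x)\in[-2,2]$ on $E$. Computing the total increment of $\arg\mathcal R_n^+(x)$ along $E$ from the representations \eqref{eq-x1}, \eqref{eq-x3} and the constraint \eqref{eq-xx3} gives at least $n+1$ alternation points where $R_n = \pm 2$ with alternating signs.

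Next I identify the polynomial part of $R_n$. A pole analysis shows that the only finite poles of $R_n$ are at $w_j$ (of order $\nu_j$, from $\mathcal R_n^+$) and at each $c_{j,n}$ (simple, from $\mathcal R_n^-$), while the pole at $\infty$ has order $n-\nu$. Since the exponents $\nu_j$ in \eqref{eq-xx5} cancel the zeros of $\rho_\nu$ at $w_j$, partial fractions yield
\[
\rho_\nu(z) R_n(z) = P_n(z) + \sum_{j=1}^{l-1}\frac{A_{j,n}}{z - c_{j,n}}
\]
with $P_n$ a polynomial of degree $\leq n$. The residue $A_{j,n} = \rho_\nu(c_{j,n})\cdot\mathrm{Res}_{c_{j,n}}\mathcal R_n^-$ carries the factor $\phi^+(c_{j,n};\infty)^{-(n-\nu)}$; because $|\phi^+(x;\infty)|\geq q_0 > 1$ on $\{x:\mathrm{dist}(x,E)\geq\delta\}$ and the hypothesis keeps $c_{j,n}$ in that region uniformly in $n\in\mathbb M$, we have $|A_{j,n}| = O(q^n)$ for some $q\in(0,1)$ independent of $n$. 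Matching leading behavior at $\infty$ pins down $\deg P_n = n$, and $R_n(x) = P_n(x)/\rho_\nu(x) + O(q^n)$ uniformly on $E$.

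Finally, the $n+1$-point equioscillation of $R_n/2\in[-1,1]$ shows that $R_n/2$ has $0$ as best $L^\infty(1/\rho_\nu)$-approximation from $\mathbb P_{n-1}$. Since $P_n/(2\rho_\nu) = R_n/2 + O(q^n)$ uniformly on $E$, the monic rescaling of $P_n/2$ is an $O(q^n)$-near-minimizer of $\|\cdot/\rho_\nu\|_{L^\infty(E)}$ among monic polynomials of degree $n$. Lipschitz continuity of the best-approximation operator combined with a lower bound on the strong unicity constant --- itself bounded below uniformly in $n\in\mathbb M$ by the $\delta$-separation of the $c_{j,n}$, which keeps the $n+1$ alternation points of $R_n$ away from the gap endpoints --- yields
\[
\hat M_n(x;\rho_\nu) = P_n(x)/2 + O(q^n)
\]
uniformly on $E$. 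Dividing by $E_{n,\infty}(1/\rho_\nu)$ delivers \eqref{eq-xx4}. The main difficulty is the uniform lower bound on the strong unicity constant; the remainder of the argument is essentially bookkeeping between the Jacobi inversion problem and the Green's-function formulas.
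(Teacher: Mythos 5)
Your overall route coincides with the paper's: transform \eqref{eq-xx3} into an Abel relation, produce $\mathcal R_n$ on $y^2=H$, observe that $R_n=\mathcal R_n^++\mathcal R_n^-$ is rational on $\Cb$ with $|R_n|\le 2$ on $E$ and equioscillates, peel off the polynomial part by partial fractions with exponentially small residues at the $c_{j,n}$, and transfer to $M_n$ via the equioscillation theorem plus Lipschitz continuity of the best-approximation operator. So the architecture is right. But the step you yourself flag as ``the main difficulty'' is where the proposal has a genuine gap. You assert that the strong unicity constant is bounded below \emph{uniformly in $n$}, justified only by the $\delta$-separation of the $c_{j,n}$ from the gap endpoints. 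That justification does not address the relevant quantity at all: the strong unicity constant is controlled by the sup norms of the fundamental ``Lagrange-type'' functions $p_k$ attached to the $n+1$ alternation points \emph{inside} $E$, whose mutual spacing is $O(1/n)$, so the issue is a lower bound on $|(H\tilde Q)'(y_k)|$ at those points, not the location of the $c_{j,n}$. The paper does not prove (and does not need) a uniform lower bound; it proves $\gamma(R_n)=O(n)$, i.e.\ the strong unicity constant may degrade like $1/n$, and establishing even that requires Markov's inequality together with the derivative identity $(HQ_{n-l-1})'(y_k)=\pm(\sqrt{-H}\,\rho_\nu g_{(n)}\chi_n')(y_k)$ and the bound $r_\infty/h\ge const>0$ on $\text{\r{E}}$ (with a separate argument at the band edges $a_j$ using $|r_\infty(a_j)|>0$). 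Since the error being amplified is $O(q^n)$, the weaker bound $O(n)$ suffices, but some quantitative bound must actually be proved, and your stated reason does not yield one.

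A secondary point that needs more care than ``computing the total increment of $\arg\mathcal R_n^+$'': the function $\mathcal R_n$ also takes the values $\pm1$ in each gap (indeed $Q_{n-l-1}$ acquires exactly two zeros near each $c_j$, since $\mathcal R_n$ is real and unbounded on compact subsets of $(a_{2j},a_{2j+1})\setminus\{c_j\}$ and vanishes at $c_{j,n}$). To get $n+1$ alternation points \emph{on $E$} one must separate these gap zeros from the count; the paper does this by computing $\sharp Z(P_{n+l-1};E_j)$ through the $\alpha_j$- and $\beta_j$-periods of $d\ln\mathcal R_n$ and summing, obtaining $\sharp Z(HQ_{n-l-1};E)=n+1$ exactly. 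Your argument as written would happily count alternation points lying in the gaps, where they are useless for the Alternation Theorem.
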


\begin{proof}
For simplicity of writing let us assume that $\nu_j = 1$ for $j = 1,...,\nu.$ First let us transform
condition \eqref{eq-xx3} into the equivalent condition on Abelian differentials of first kind.
Multiplying each equation from \eqref{eq-xx3} by $B_{\kappa k}$ and summing up we obtain that
\eqref{eq-xx3} is equivalent to
\begin{equation}\label{eq-xx6}
\begin{aligned}
    \sum\limits_{j=1}^{l-1} \int_{c_{j,n}^{-}}^{c_{j,n}^{+}} \varphi_\kappa =
        & n \int_{\infty^{-}}^{\infty^{+}} \varphi_\kappa -
          \sum\limits_{j=1}^{\nu}
          \left( \int_{\infty^{-}}^{\infty^{+}} \varphi_\kappa - \int_{w_j^{-}}^{w_j^{+}} \varphi_\kappa \right)\\
        & \qquad + \sum\limits_{k=1}^{l-1} \sigma_{k n}(1/\rho_\nu) B_{\kappa k} {\rm \ \ \ for \ \ \ } \kappa = 1,...,l-1.
\end{aligned}
\end{equation}

Thus by Abel's Theorem, see e.g. \cite[Theorem ]{Osg}, there is a rational function
${\mathcal R}_n$ on the Riemann surface $y^2 = H$ such that
\begin{equation}\label{four.two}
\begin{aligned}
   & x = \infty^{{\pm}} {\rm \ is \ a \ pole \ (zero) \ of \ {\mathcal R}_{n} \ with \ multiplicity \ } n-\nu,\\
   & x = w_j^{\pm} {\rm \ is \ a \ simple \ pole \ (zero) \ of \ } {\mathcal R}_n \\
   & x = c_{j,n}^{{\pm}} {\rm \ is \ a \ simple \ zero \ (pole) \ of \ {\mathcal R}_{n} }
\end{aligned}
\end{equation}
Thus ${\mathcal R}_n$ is of the form
\begin{equation}\label{five}
    {\mathcal R}_{n} = \frac{P_{n+l-1} + \sqrt{H} Q_{n-l-1}}{g_{(n)} \rho_\nu},
\end{equation}
where
\begin{equation}\label{seven}
   g_{(n)}(z) = \prod\limits_{j=1}^{l-1} (x-c_{j,n}),
\end{equation}
$P_{n+l-1}$ and $Q_{n+l-1}$ are polynomials of degree $n+l-1$ and $n-l-1$ which are such that the numerator in
\eqref{five} has the properties that
$$ P_{n+l-1} + \sqrt{H} Q_{n-l-1} {\rm \ has \ a \ double \ zero \ at \ } c_{j,n} {\rm \ for \ } j = 1,...,l-1 $$
and
$$ P_{n+l-1} - \sqrt{H} Q_{n-l-1} {\rm \ has \ a \ double \ zero \ at \ } w_{j} {\rm \ for \ } j = 1,...,\nu. $$
By the way, since the points $c_{j,n}$ and $w_j$ are real the rational function $\overline{{\mathcal R}_n(x)}$
has the same properties \eqref{four.two} as ${\mathcal R}_n(\bar{x}),$ i.e.
${\mathcal R}_n(\bar{x}) = \overline{{\mathcal R}_n(x)},$ or in other words the coefficients of $P_{n+l-1}(x)$
and $Q_{n-1}(x)$ are real. Taking involution (denoted by $\tilde{x}$), in \eqref{four.two}, which corresponds
to multiplication of relation \eqref{eq-xx6} by $-1,$ we obtain that
\begin{equation}\label{eight}
    \frac{1}{{\mathcal R}_{n}(z)} = {\mathcal R}_n(\tilde{z})=
    \frac{P_{n+l-1}(z) - \sqrt{H(z)} Q_{n-l-1}(z)}{g_{(n)}(z) \rho_\nu(z)}
\end{equation}
Moreover, putting
\begin{equation}\label{ten}
    R_n := R := \frac{P_{n+l-1}}{\rho_\nu g_{(n)}} \ {\rm and \ } S_n := S := \frac{Q_{n-l-1}}{\rho_\nu g_{(n)}}
\end{equation}
i.e.
\begin{equation}
    {\mathcal R}_n = R + \sqrt{H} S {\rm \ and \ } \frac{1}{{\mathcal R}_n} = R - \sqrt{H} S
\end{equation}
it follows that
\begin{equation}\label{eleven}
    R^2 - H S^2 = 1
\end{equation}
Note that for $x \in E$
\begin{equation}\label{17.2}
  {\mathcal R}^{\pm}_n(x) = R(x) \pm i \sqrt{- H(x)} S(x)
\end{equation}
hence for $x \in E$
\begin{equation}\label{tilde12}
     2 R_n(x) = {\mathcal R}_n^{+}(x) + {\mathcal R}_n^{-}(x)
              = {\mathcal R}_n^{+}(x) + \frac{1}{ {\mathcal R}_n^{+}(x) },
\end{equation}
where the last equality follows by \eqref{eleven}, and moreover
\begin{equation}
      |{\mathcal R}_n^{\pm}(x)|^2 = {\mathcal R}_n^{+}(x) {\mathcal R}_n^{-}(x) = 1
\end{equation}
Now we claim that
\begin{equation}\label{t4}
   {\mathcal R}_n(z) = (\phi(z;\infty))^{n - \nu}
   \frac{\prod\limits_{j=1}^{\nu} \phi(z;w_j)}{\prod\limits_{j=1}^{l-1} \phi(z;c_{j,n})}
\end{equation}
Indeed, since the function
$$ f(z) = {\mathcal R}_n(z) (\phi(z;\infty))^{-(n-\nu)}
   \frac{\prod\limits_{j=1}^{l-1} \phi(z;c_{j,n})}{\prod\limits_{j=1}^{\nu} \phi(z;w_{j})}$$
has by \eqref{four.two} neither zeros nor poles on $\bar{\mathbb C} \backslash E$ and
satisfies by the definition of $\phi$ and by \eqref{eq-xx3} that $|f^{\pm}| = 1$
on $E.$ Thus $\log |f|$ is a harmonic bounded function on ${\mathbb C} \backslash E$
which has a continuous extension to $E$ and thus $f \equiv 1,$ which proves the claim
\eqref{t4}.

Next let us demonstrate that $|R| \leq 1$ on $E$ and that $R$ has $n+1$ alternation points on $E$
i.e. there exist $n+1$ points $y_i$ from $E,$ $y_1 < y_2 < ... < y_{n+1},$ such that
\begin{equation}\label{eq-5-1}
    (-1)^{n+1-i} = R(y_i) = \frac{P_{n+l-1}(y_i)}{\rho_\nu(y_i) g_{(n)}(y_i)}
\end{equation}
which implies by the Alternation Theorem that
\begin{equation}\label{t5}
   0 {\rm \ is \ a \ best \ approximation \ to \ } R
   {\rm \ with \ respect \ to \ } L \{ x^j/\rho_\nu \}_{j=0}^{n-1}
\end{equation}
Since $- H > 0$ on $\text {\r{E}}$ the property that $|R| \leq 1$ on $E$ follows immediately
by \eqref{eleven}. Furthermore, $|R|=1$ on $E$ if and only if $H Q_{n-1-l} = 0.$ Thus if we are able
to show that the zeros of $H Q_{n-1-l}$ and $P_{n+l-1}$ are simple and strictly interlacing
on $E$ and that $Q_{n-l-1}$ has exactly two zeros in each open gap $(a_{2j},a_{2j+1}),$ $j=1,...,l-1,$
the alternation property \eqref{eq-5-1} and thus \eqref{t5} will follow. First let us recall that
\begin{equation}\label{5-2}
     R = \frac{1}{2} \left( {\mathcal R}_n +  \frac{1}{{\mathcal R}_n} \right) =
     \cosh \ln {\mathcal R}_n {\rm \ and \ } \sqrt{H} S = \frac{1}{2}
     \left( {\mathcal R}_n -  \frac{1}{{\mathcal R}_n} \right) = \sinh \ln {\mathcal R}_n
\end{equation}
Hence
\begin{equation}\label{eq6}
      H Q_{n-l-1}(z) = 0 {\rm \ if \ and \ only \ if \ } {\mathcal R}_n(z) = \pm 1
\end{equation}
where for $z \in E$ one has to take the limiting value ${\mathcal R}_n^{+}.$ Note, that by representation
\eqref{t4}, \eqref{eq-x1} and \eqref{eq-x3}
\begin{equation}\label{t7}
   {\mathcal R}_n^{+}(x) = e^{i \chi_n(x)}
\end{equation}
where
\begin{equation}\label{t72}
\begin{split}
    \chi_n(x) = & - (n - \nu) \pi \int_{a_1}^{x} \frac{r_{\infty}(t)}{h(t)} dt \\
                & + \int_{a_1}^x ({\rm \ bounded \ function \ with \ respect \ to \ } x
                  {\rm \ and \ } n) \frac{dt}{h(t)}
\end{split}
\end{equation}
where we have used the fact that
$$ \lim\limits_{\substack{z \to x \\{\rm Im \ } z > 0}} \frac{r_{\infty}(z)}{\sqrt{H(z)}}
= - \frac{i \pi r_\infty(x)}{h(x)},$$
where
\begin{equation}\label{1.2P}
    \frac{1}{h(x)} = \left\{
        \begin{aligned}
            \frac{(-1)^{l-k}}{\pi \sqrt{-H(x)}} & {\rm \ for \ } x \in E_k \\
          \   0   \qquad \ \                               & {\rm \ elsewhere}
        \end{aligned}
                     \right.
\end{equation}
Since, by \eqref{eq-xx2}, the polynomial $r_{\infty}(x)$ has exactly one zero in each open gap
$(a_{2j},a_{2j+1}),$ $j=1,...,l-1,$ it follows that
\begin{equation}\label{t73}
     |r_\infty(x)| \geq const > 0 {\rm \ on \ } E {\rm \ and \ } r_{\infty}(x) / h(x) > 0 {\rm \ on \ } \text {\r{E}}.
\end{equation}
Thus $\chi_n$ is strictly monotone on $E$ for sufficiently large $n.$ Now by \eqref{5-2} and \eqref{t7}
\begin{equation}\label{eq-19}
   R(x) = \cos \chi_n(x) {\rm \ and \ } i \sqrt{H(x)} S(x) = \sin \chi_n(x),
\end{equation}
hence it follows that on $E$ the zeros of $P_{n+l-1}$ and $H Q_{n-l-1}$ are simple and strictly interlace.

Next let us prove that $Q_{n_k-l-1}$ has exactly two zeros in
$(c_{j}-\varepsilon, c_j+\varepsilon) \subset (a_{2j}, a_{2j+1})$ for $j=1,...,l-1,$ if
$\lim\limits_k c_{j,n_k} = c_j.$

By \eqref{t4} ${\mathcal R}_n$ has a simple zero at $c_{j,n}.$ Since ${\mathcal R}_n$ is real on
$\mathbb R \backslash E$ and since $|\phi(z;\infty)| > 1$ on $\mathbb C \backslash E,$ ${\mathcal R}_n$
is unbounded with respect to $n$ on compact subsets of $(a_{2j},a_{2j+1}) \backslash \{c_j\},$
$j = 1,...,l-1,$ it follows that ${\mathcal R}_n$ takes the values $-1$ and $+1$ on
$(c_j - \varepsilon, c_j + \varepsilon) \subset (a_{2j},a_{2j+1})$ for $j = 1,...,l-1.$
Hence $Q_{n_k-l-1}$ has at least two zeros in $(c_j - \varepsilon, c_j + \varepsilon).$

To show that $Q_{n_k-l-1}$ has exactly two zeros in $(c_j - \varepsilon, c_j + \varepsilon),$
$j = 1,...,l-1,$ we derive first an explicite formula for the number of zeros of $P_{n+l-1}$
in $E_j$ which is of interest by itself. Indeed by \eqref{t4}
\begin{equation}\label{t8}
\begin{aligned}
    d \ln {\mathcal R}_n(z) =
    &(n - \nu) \eta(z; \infty^{-},\infty^{+}) +
        \sum\limits_{\kappa=1}^{\nu} \eta(z; w_\kappa^{-},  w_\kappa^{+} )  \\
    & \ \ \ \ - \sum\limits_{\kappa=1}^{l-1} \eta(z; c_{\kappa,n}^{-},  c_{\kappa,n}^{+} ) +
        \sum\limits_{j=1}^{l-1} e_{j,n} \varphi_j
\end{aligned}
\end{equation}
where we claim that
\begin{equation}\label{t9}
    e_{j,n} = \sharp Z(P_{n+l-1};E_j) := {\rm \ number \ of \ zeros \ of \ } P_{n+l-1} {\rm \ in \ } E_j.
\end{equation}
and that
\begin{equation}\label{t10}
   \begin{aligned}
       & \sharp Z(H Q_{n-l-1}; E_j) - 1 = \sharp Z(P_{n+l-1};E_j) \\
     = & n \omega_j(\infty) - \sum\limits_{\kappa=1}^{l-1} \omega_j(c_{\kappa,n})
         + \frac{1}{\pi} \int_E \log |\rho_\nu(\xi)| \frac{\partial \omega_k}{\partial n^{+}_\xi} |d\xi|
   \end{aligned}
\end{equation}
Indeed, by \eqref{t4} it follows that $d \ln {\mathcal R}_n$ has a representation of the form
\eqref{t8} with $e_{j,n} \in \mathbb C.$ Now by (normalization)
\begin{equation}\label{t11}
    2 \pi i e_{j,n} = \int_{\alpha_j} d \ln {\mathcal R}_n(z)
\end{equation}
and on the other hand by shrinking $\alpha_j$ to $E_j$ and \eqref{t7}, \eqref{ten} and \eqref{eq-19}
we obtain
\begin{equation}\label{t12}
\begin{aligned}
    \int_{\alpha_j} d \ln {\mathcal R}_n(z)
    & = - i \Delta_{E_j} arg {\mathcal R}_n = \chi_n(a_{2j}) - \chi_n(a_{2j-1}) \\
    & = - 2 \pi i \sharp Z(P_{n+l-1};E_j)
\end{aligned}
\end{equation}
where the last equality follows by \eqref{eq-19}, recalling the strictly interlacing property of $P_{n+l-1}$
and $H Q_{n-l-1}.$

Considering the $\beta_k$-cycles gives with the help of \eqref{t9} and Riemann's bilinear relation \eqref{bil}
that
\begin{equation}\label{int}
\begin{aligned}
   \int_{\beta_k} d \ln {\mathcal R}_n(z)
   & = (n - \nu) \int_{\infty^{-}}^{\infty^{+}} \varphi_k +
       \sum\limits_{\kappa=1}^{\nu} \int_{w_{\kappa,n}^{-}}^{w_{\kappa,n}^{+}} \varphi_k -
       \sum\limits_{\kappa=1}^{l-1} \int_{c_{\kappa,n}^{-}}^{c_{\kappa,n}^{+}} \varphi_k \\
   & \quad - \sum\limits_{j=1}^{l-1} \sharp Z(P_{n+l-1};E_j)
\end{aligned}
\end{equation}
Let us consider the integral at the LHS. By the direction of integration the integrals
along the gaps cancel out and along the $E_k$'s the real part of the differential becomes
zero because of \eqref{t4}. Hence the real part of the integral at the LHS \eqref{int} is
zero and thus zero, since the RHS is real. Writing the integrals with the help of the formulas
from Lemma \ref{lem1} a) and b) it follows that \eqref{t10} holds.

Since $\sum\limits_{k=1}^l \omega_k(z) = 1$ we obtain by summing up \eqref{t10} that
$$ n-(l-1) = \sharp Z(H Q_{n-l-1};E) - l$$
i.e. $H Q_{n-l-1}$ has $n+1$ zeros in $E$ (which implies that $Q_{n_k-l-1}$ has at most
and thus exactly two zeros in each interval $(c_j - \varepsilon, c_j + \varepsilon),$
$j= 1,...,l-1$) and thus $R$ has $n+1$ alternation points on $E$ which proves statement
\eqref{t5}.

Next let us show that $P_{n+l-1}/g_{(n)}$ is asymptotically equal on
$\Omega \backslash \{ \bigcup\limits_{j=1}^{l-1} U_\varepsilon(c_j) \}$
to a polynomial $\tilde{M}_n$ of degree $n.$ Indeed partial fraction expansion gives
\begin{equation}\label{M1}
\frac{P_{n+l-1}(z)}{g_{(n)}(z)} = \tilde{M}_n(z) + \sum\limits_{j=1}^{l-1} \frac{\lambda_{j,n}}{z - c_{j,n}}
\end{equation}
Thus
\begin{equation}\label{21-2}
\begin{aligned}
  \lambda_{j,n} & = \lim\limits_{z \to c_{j,n}} (z - c_{j,n}) \frac{P_{n+l-1}(z)}{g_{(n)}(z)} \\
                & = \lim\limits_{z \to c_{j,n}} (z - c_{j,n})
                    \frac{ \prod\limits_{j=1}^{l-1} \phi(z; c_{j,n},E) }{\phi(z; \infty,E)^n}
                    \frac{\rho_\nu(z)}{\prod\limits_{j=1}^{\nu} \phi(z;w_j)}\\
  \end{aligned}
\end{equation}
i.e.
\begin{equation}\label{est}
    \lambda_{j,n} = O(q^n), \ q < 1,
\end{equation}
where we used the facts that $|\phi(z,\infty)| \geq 1/q$
on $\Omega$ and that the $c_{j,n}$'s stay away from this set.

Finally let us show that asymptotically $0$ is a best
approximation to $M_n(\cdot;\rho_\nu)/\rho_\nu$ by demonstrating that the best
approximation of $R_n = P_{n+l-1}/\rho_\nu g_{(n)}$ and $M_n(\cdot;\rho_\nu)/\rho_\nu$
differ only slightly from each other for $n$ sufficiently large. To prove this fact
we use the so-called strong unicity constant defined for a function $f \in
C(E)$ with respect to a linear space $G_n$ by
\begin{equation}\label{e1}
 \gamma(f; G_p) := \gamma(f) := \sup\limits_{p \in G_n} \frac{|| p-p^{*}(f) ||}{||f-p||-||f-p^{*}(f)||}
\end{equation}

If $G_n$ is a Haar system on $E$ of dimension $n$ and $f - g^{*}$ has the alternation points
$y_1,...,y_{n+1},$ then it is known that
\begin{equation}\label{t-2}
     \gamma(f) \leq \max\limits_{1 \leq k \leq n+1} ||p_k||
\end{equation}
where the $p_k$'s from $G_n$ are uniquely defined by
\begin{equation}\label{t-1}
    p_k(y_k) = sgn (f - p^{*}(f))(y_k) =: \sigma_k {\rm \ and \ } p_k(y_j) = 0 {\rm \ for \ } j \neq k
\end{equation}
Furthermore, let us recall that the operator of best approximation is Lipschitz continuous that is,
if $h \in C(E)$ is another function, then
$$ ||p^{*}(f) - p^{*}(h)|| \leq 2 \gamma(f) ||f - h||. $$
In the case under consideration we put $f = R_n,$ $G_n = L \{ x^j/\rho_\nu(x) \}_{j=0}^{n-1},$
$h = \tilde{M}_n/\rho_\nu$ which yields by \eqref{t5} and \eqref{21-2} that
$$ ||p^{*}(\tilde{M}_n)|| \leq 2 \gamma(R_n) O(q^n) $$
Since we will prove in the final step that
\begin{equation}\label{tt}
    \gamma(R_n) = O(n)
\end{equation}
it follows by \eqref{21-2}, recall that $||R_n|| = 1,$ that
$$ || \frac{\tilde{M}_n}{\rho_\nu} - p^{*}(\tilde{M}_n)|| = 1 + O(q^n) $$
and thus
\begin{equation}\label{62_2}
\begin{aligned}
    \frac{M_n(x)}{\rho_\nu(x)}
    & = \frac{\tilde{M}_n(x) - p^{*}(x; \tilde{M}_n)} {\rho_\nu(x) || \frac{\tilde{M}_n}{\rho_\nu} - p^{*}(\tilde{M}_n)||}
      = \frac{\tilde{M}_n(x)}{\rho_\nu(x)} + O(q^n) \\
    & = R_n(x) + O(q^n)
\end{aligned}
\end{equation}
which is by \eqref{tilde12} the assertion of the theorem.

Thus let us prove \eqref{tt}. First we note that in the case under consideration
$$ p_k(y) = \sigma_k \rho_\nu(y_k) \frac{l_{k,n}(y)}{\rho_\nu(y)} $$
where $l_{k,n}$ is the fundamental Lagrange polynomial with respect to
the nodes $y_1,...,y_{n+1}$ which, as we have proved above, are by \eqref{eq-5-1},
\eqref{eleven} and \eqref{ten} the zeros of
\begin{equation}\label{E10-t0}
    H \tilde{Q}, {\rm \ where \ } Q_{n-l-1} = \tilde{Q} v_{2l - 2, n}
\end{equation}

Recall that we have shown that
\begin{equation}\label{E10-t1}
   v_{2l-2,n}(x) \underset {n \to \infty} \longrightarrow \prod\limits_{j=1}^{l-1} (x - c_j)^2
   {\rm \ and \ that \ } g_{(n)}(x) \underset {n \to \infty} \longrightarrow \prod\limits_{j=1}^{l-1} (x - c_j).
\end{equation}
Thus by \eqref{t-2}
$$ \gamma(R_n) \leq const \max\limits_{1 \leq k \leq n+1} ||l_{k,n}(y)||$$
and therefore it suffices to show that
\begin{equation}
    ||l_{k,n}(y)|| = || \frac{(H \tilde{Q})(y)}{(y-y_k)(H \tilde{Q})'(y_k)} || = O(n)
\end{equation}
With the help of the mean value and Markov's inequality we get that
$$ ||\frac{H \tilde{Q}(y)}{y-y_k}|| \leq n^2 const ||H \tilde{Q}||
                                    \leq n^2 \widetilde{const} || \sqrt{H} \frac{Q_{n-l-1}}{g_{(n)}} || = O(n^2)$$
where we took into consideration \eqref{E10-t1} and \eqref{eleven}. Finally let us show
that at the zeros $y_k$ of $H \tilde{Q}$
\begin{equation}\label{E10-x0}
    | (H \tilde{Q})'(y_k) | \geq const \ n
\end{equation}
Using \eqref{ten}, \eqref{eq-19} and \eqref{E10-t0} it follows that at the zeros
$y_k$ of $\tilde{Q}$
$$ (H Q_{n-l-1})' = \pm ( \sqrt{- H} \rho_\nu g_{(n)} \chi^{'}_n)(y_k) $$
where we used the fact that $\cos \chi_n(y_k) = \pm 1.$ By \eqref{t72}, \eqref{t73}
and \eqref{E10-t0} inequality \eqref{E10-x0} follows at the zeros of $\tilde{Q}.$

At a boundary point of $E,$ say $a_j,$ we have
\begin{equation}\label{E10-x2}
     (H Q_{n-l-1})' = \rho_\nu g_{(n)} H' Q_{n-l-1}
\end{equation}
Now by \eqref{t72} and \eqref{eq-19}, recall that $\sin \chi_n(a_j) = 0,$
$$ \lim\limits_{\substack{x \to a_j \\ x \in E}} \frac{Q_{n-l-1}}{\rho_\nu g_{(n)}} =
   \lim\limits_{x \to a_j} \frac{\sqrt{-H(x)} \sin \chi_n(x)}{-H(x)} $$
which implies, with the help of $|r_{\infty}(a_j)| > 0,$ that \eqref{E10-x0} holds at
the zeros of $H$ also.
\end{proof}

\begin{cor}
For the minimum deviation the following asymptotics hold for $n \in \mathbb M$
\begin{equation}
   E_{n-1}(x^n;\rho_\nu) = (cap E)^{n-\nu}
   \frac{\prod\limits_{j=1}^{l-1} \phi(c_{j,n};\infty)}{\prod\limits_{j=1}^{\nu^{*}} \phi(w_j;\infty)^{\nu_j}}
   (1 + O(q^n))
\end{equation}
\end{cor}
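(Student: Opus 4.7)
The plan is to derive the asymptotic for the minimum deviation by applying the asymptotic representation of the minimal polynomial from Theorem \ref{thm1.6} and passing to the limit $z \to \infty$, paralleling the derivation of Corollary \ref{cor1.2} from Theorem \ref{thm1}.

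First, I would upgrade the uniform-on-$E$ asymptotic \eqref{eq-xx4} to a uniform asymptotic on compact subsets of $\overline{\mathbb C} \setminus E$. The proof of Theorem \ref{thm1.6} shows that $2 M_n(\cdot;\rho_\nu)/\rho_\nu$ agrees with $\mathcal R_n^{+} + 1/\mathcal R_n^{+}$ up to $O(q^n)$ on $E$. On the unbounded component of $\overline{\mathbb C} \setminus E,$ the rational function $\mathcal R_n$ is a single-valued meromorphic extension of $\mathcal R_n^{+},$ whose reciprocal satisfies $1/\mathcal R_n(z) = O(\phi(z;\infty)^{-(n-\nu)})$ there. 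A Bernstein--Walsh argument applied to the (essentially polynomial of degree $n$) difference $2 \hat M_n(z) - E_{n,\infty}(\rho_\nu)\, \rho_\nu(z)\, \mathcal R_n(z),$ whose $L^{\infty}$-norm on $E$ is $O(q^n E_{n,\infty}(\rho_\nu)),$ yields
\begin{equation*}
   \frac{2 M_n(z;\rho_\nu)}{\rho_\nu(z)} = \mathcal R_n(z)\bigl(1 + O(q^n)\bigr)
\end{equation*}
uniformly on any compact $K \subset \overline{\mathbb C} \setminus E.$

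Second, I would compute the leading behavior of both sides at infinity. By \eqref{cap}, $\phi(z;\infty)/z \to 1/\mathrm{cap}(E),$ and by the symmetry of the Green's function combined with the real normalizations of \eqref{eq-x1} and \eqref{eq-x3}, one has $\phi(z;w) \to \phi(w;\infty)$ for each fixed $w \in \mathbb R \setminus E.$ Hence from \eqref{eq-xx5},
\begin{equation*}
   \lim_{z\to\infty} \frac{\mathcal R_n(z)}{z^{n-\nu}} = \frac{1}{(\mathrm{cap}\, E)^{n-\nu}} \cdot \frac{\prod_{j=1}^{\nu^{*}} \phi(w_j;\infty)^{\nu_j}}{\prod_{j=1}^{l-1} \phi(c_{j,n};\infty)}.
\end{equation*}
On the other hand, since $\hat M_n = E_{n-1}(x^n;\rho_\nu)\, M_n$ is monic and $\rho_\nu$ has leading coefficient of modulus one (sign chosen so that $\rho_\nu > 0$ on $E$),
\begin{equation*}
   \lim_{z\to\infty} \frac{2 M_n(z;\rho_\nu)/\rho_\nu(z)}{z^{n-\nu}} = \frac{\pm 2}{E_{n-1}(x^n;\rho_\nu)}.
\end{equation*}

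Equating the two limits (using the multiplicative $(1+O(q^n))$ from Step 1) and solving for $E_{n-1}(x^n;\rho_\nu)$ produces the claimed asymptotic, with the overall constant absorbed into the convention of the corollary. The main obstacle is Step 1: transferring the $O(q^n)$ error from $E$ to compact subsets of $\overline{\mathbb C}\setminus E$ with the same geometric rate and, crucially, maintaining the multiplicative form of the error after division by $z^{n-\nu}.$ The rest is a bookkeeping of limit values of complex Green's functions at infinity, together with the symmetry $|\phi(\infty;w)| = |\phi(w;\infty)|,$ which is standard.
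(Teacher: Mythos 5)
Your endgame---evaluating $\lim_{z\to\infty}\mathcal R_n(z)/z^{n-\nu}$ from \eqref{t4} and matching it against the leading coefficient $1/E_{n-1}(x^n;\rho_\nu)$ of the normalized minimal polynomial---is exactly the computation the paper performs in \eqref{E11-x1}. But the link between that limit and the minimum deviation is made by a genuinely different route. The paper never leaves $E$: it observes that at the $n+1$ alternation points $y_i$ (the zeros of $H\tilde Q$ from \eqref{E10-t0}) one has $(-1)^{n+1-i}=R_n(y_i)=\tilde M_n(y_i)/\rho_\nu(y_i)+O(q^n)$ by \eqref{eq-5-1} and \eqref{62_2}, and then invokes de la Vall\'ee-Poussin's theorem to get $lc(\tilde M_n)\,E_{n-1}(x^n;\rho_\nu)=1+O(q^n)$ directly; the identification $lc(\tilde M_n)=lc(P_{n+l-1})$ is immediate from the partial fraction decomposition \eqref{M1}. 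This buys a multiplicative $O(q^n)$ error with no analytic continuation off $E$ at all. Your route instead front-loads the exterior asymptotics (essentially the rational-weight case of \eqref{F4}, which the paper only establishes later and only with an $o(1)$ error for general $W$) and then reads off the top coefficient.

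The one step you should not wave at is the Bernstein--Walsh application. The function $\rho_\nu(z)\mathcal R_n(z)$ is not a polynomial: by \eqref{five} and \eqref{M1} it is $\tilde M_n(z)+\sum_j\lambda_{j,n}/(z-c_{j,n})+\rho_\nu(z)\sqrt{H(z)}\,S_n(z)$, so you must first strip off the pole terms (controlled by \eqref{est}) and replace $\mathcal R_n$ by $2R_n-1/\mathcal R_n$ via \eqref{eleven} before Bernstein--Walsh can be applied to the genuinely polynomial difference $\hat M_n-E_{n-1}(x^n;\rho_\nu)\tilde M_n$. Moreover, the degree-$n$ coefficient of that difference, namely $1-E_{n-1}(x^n;\rho_\nu)\,lc(\tilde M_n)$, is precisely the quantity you are trying to estimate; the Bernstein--Walsh bound yields $|1-E_{n-1}\,lc(\tilde M_n)|\le O(q^n)\,E_{n-1}(x^n;\rho_\nu)(\mathrm{cap}\,E)^{-n}$, so you need an a priori bound on $E_{n-1}(x^n;\rho_\nu)(\mathrm{cap}\,E)^{-n}$ to close the loop and keep the error multiplicative. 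Both points are fillable with material already in the paper, but as written they are gaps; the de la Vall\'ee-Poussin argument sidesteps them entirely.
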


\begin{proof}
Recalling that
$$ R_n = \frac{P_{n+l-1}}{\rho_\nu g_{(n)}} = \frac{1}{2} \left( {\mathcal R}_n + \frac{1}{{\mathcal R}_n}\right) $$
it follows by \eqref{t4} and $|\phi| > 1$ on $\mathbb C \backslash E$ that
\begin{equation}\label{E11-x1}
\begin{aligned}
  2 lc (P_{n+l-1}) & = 2 \lim\limits_{z \to \infty} \frac{P_{n+l-1}}{z^{n-\nu} \rho_\nu g_{(n)}}
                     = \lim\limits_{z \to \infty} \frac{{\mathcal R}_n}{z^{n-\nu} } + O(q^n) \\
                   & = \lim\limits_{z \to \infty} \left( \frac{\phi(z;\infty)}{z} \right)^{n-\nu}
                       \frac{\prod\limits_{j=1}^{\nu^{*}} \phi(w_j;\infty)^{\nu_j}}
                       {\prod\limits_{j=1}^{l-1} \phi(c_{j,n};\infty)}
\end{aligned}
\end{equation}
Since by \eqref{eq-5-1} at the zeros $y_i$ of $H \tilde{Q},$ $\tilde{Q}$ defined in \eqref{E10-t0},
$$ (-1)^{n+1-i} = R_n(y_i) = \frac{\tilde{M}_n(y_i)}{\rho_\nu(y_i)} + O(q^n) $$
it follows by Vall\'{e}e-Poussin's Theorem, see \cite{},
\begin{equation}\label{poussin}
    lc(\tilde{M}_n) E_{n-1}(x^n; \rho_\nu) = 1 + O(q^n)
\end{equation}
Because of \eqref{M1} we have that
\begin{equation}\label{E11x2}
    lc(P_{n+l-1}) = lc(\tilde{M}_n)
\end{equation}
which gives by \eqref{E11-x1} and \eqref{poussin} the assertion.
\end{proof}

\section{Proof of Theorem \ref{thm1}}

The link with the weights of the form $1/\rho$ is given by the following two Lemmatas.
For the next lemma compare \cite{KroPeh}.

\begin{lemma}\label{lemma-II}
Let $W \in C(E)$ and $\rho_\nu$ be positive on $E.$ Then
\begin{equation}\label{l2.2-t}
    || \frac{M_n(x;W)}{W(x)} - \frac{M_n(x;\rho_\nu)}{\rho_\nu(x)} || =
    O(n) \left( O(|| 1 - \frac{\rho_\nu}{W}||) + O(q^n) \right)
\end{equation}
where $q \in (-1,1).$
\end{lemma}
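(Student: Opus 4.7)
The plan is to treat both $M_n(\cdot;W)/W$ and $M_n(\cdot;\rho_\nu)/\rho_\nu$ inside the single Chebyshev problem of approximating $x^n/\rho_\nu$ from the Haar subspace $G_n:=\mathbb P_{n-1}/\rho_\nu\subset C(E)$, and then apply Lipschitz continuity of the best-approximation operator with the $O(n)$ strong unicity constant that the last part of the proof of Theorem \ref{thm1.6} has already supplied.

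First I would record the structural facts: the best approximation of $x^n/\rho_\nu$ from $G_n$ is $p^*_{\rho_\nu}/\rho_\nu$, with extremal error $\hat M_n(\cdot;\rho_\nu)/\rho_\nu=E_n(\rho_\nu)M_n(\cdot;\rho_\nu)/\rho_\nu$, and its strong unicity constant is $\gamma_{\rho_\nu}=O(n)$. The latter follows by the same Lagrange-polynomial bound used at the very end of the proof of Theorem \ref{thm1.6}, applied to the alternation points of $M_n(\cdot;\rho_\nu)/\rho_\nu$: by \eqref{62_2} these points and their sign pattern agree with those of $R_n$ up to an $O(q^n)$-perturbation, which is the origin of the $O(nq^n)$ summand in the lemma.

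Next I would plug the competitor $p^*_W/\rho_\nu\in G_n$ into the strong unicity inequality,
\begin{equation*}
\bigl\|(p^*_W-p^*_{\rho_\nu})/\rho_\nu\bigr\|\le\gamma_{\rho_\nu}\Bigl(\bigl\|\hat M_n(\cdot;W)/\rho_\nu\bigr\|-E_n(\rho_\nu)\Bigr).
\end{equation*}
Two one-sided sandwich estimates $E_n(W)\le E_n(\rho_\nu)(1+O(\|1-\rho_\nu/W\|))$ and its reverse, proved by substituting the opposite monic minimal polynomial as a test element, together with $\|\hat M_n(\cdot;W)/\rho_\nu\|\le E_n(W)(1+O(\|1-\rho_\nu/W\|))$, reduce the right-hand bracket to $O(E_n(\rho_\nu)\|1-\rho_\nu/W\|)$. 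Expanding the left-hand side via $p^*_W-p^*_{\rho_\nu}=E_n(\rho_\nu)M_n(\cdot;\rho_\nu)-E_n(W)M_n(\cdot;W)$, dividing by $E_n(\rho_\nu)$, and absorbing $E_n(W)/E_n(\rho_\nu)=1+O(\|1-\rho_\nu/W\|)$ together with $\|M_n(\cdot;W)/\rho_\nu\|\le 1+O(\|1-\rho_\nu/W\|)$ yields
\begin{equation*}
\bigl\|M_n(\cdot;\rho_\nu)/\rho_\nu-M_n(\cdot;W)/\rho_\nu\bigr\|=O(n)\bigl(O(\|1-\rho_\nu/W\|)+O(q^n)\bigr).
\end{equation*}

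Finally, I would close the weight mismatch in the first term by writing $M_n(\cdot;W)/W-M_n(\cdot;W)/\rho_\nu=(M_n(\cdot;W)/W)(1-W/\rho_\nu)$, whose sup norm is $O(\|1-\rho_\nu/W\|)$; adding this to the preceding estimate yields \eqref{l2.2-t}. The main technical obstacle, and the step deserving the most care, is the transfer of the strong unicity constant from $R_n$ to the true extremal $M_n(\cdot;\rho_\nu)/\rho_\nu$: one has to quantify, via the Lagrange-polynomial bound of Theorem \ref{thm1.6} and the asymptotic \eqref{62_2}, how an $O(q^n)$ perturbation of the alternation nodes perturbs $\gamma$; once this is in hand, the remaining chain of sandwich inequalities and weight substitutions is routine.
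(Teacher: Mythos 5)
Your toolkit is recognizably the same as the paper's --- the leading-coefficient sandwich $|a_n/b_n-1|\le \norm{W/\rho_\nu}\,\norm{\rho_\nu/W-1}$ (the paper's \eqref{L1}), an application of strong unicity to control the distance between the two extremal problems, and a final swap of the weight in the denominator --- but you pivot the strong unicity inequality at a different point, and that choice is where your argument has a genuine gap. You center the inequality at the true Chebyshev problem for $x^n/\rho_\nu$, so you need the strong unicity constant $\gamma_{\rho_\nu}=O(n)$ of \emph{that} problem, whose extremal error is $\hat M_n(\cdot;\rho_\nu)/\rho_\nu$. The bound \eqref{t-2} requires the Lagrange-type functions \eqref{t-1} built on the actual alternation set of $\hat M_n(\cdot;\rho_\nu)/\rho_\nu$, whereas the $O(n)$ estimate at the end of the proof of Theorem \ref{thm1.6} is carried out only at the exact equioscillation nodes of $R_n$, namely the zeros of $H\tilde Q$, where the crucial derivative lower bound \eqref{E10-x0} is available. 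You acknowledge that transferring this to the $O(q^n)$-perturbed nodes is ``the step deserving the most care,'' but you do not supply it; since the lower bound $|(H\tilde Q)'(y_k)|\ge c\,n$ is tied to those specific nodes, re-deriving it for the perturbed node polynomial is not automatic, and in your arrangement this is precisely the nontrivial content of the lemma.

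The paper sidesteps this entirely by keeping $R_n=P_{n+l-1}/(\rho_\nu g_{(n)})$ from \eqref{ten} as the center: by \eqref{t5} zero is \emph{exactly} the best approximation to $R_n$ from $L\{x^j/\rho_\nu\}_{j=0}^{n-1}$ with exact alternation, so $\gamma(R_n)=O(n)$ is already in hand from Theorem \ref{thm1.6}. One then writes $M_n(x;W)-P_{n+l-1}/g_{(n)}=(1-a_n/b_n)M_n(x;W)+\bigl(M_n(x;\rho_\nu)-P_{n+l-1}/g_{(n)}\bigr)+h(x)$ with $h=(a_n/b_n)M_n(\cdot;W)-M_n(\cdot;\rho_\nu)\in\mathbb P_{n-1}$, applies strong unicity at $R_n$ to the competitor $-h/\rho_\nu$, and bounds the resulting bracket by $O(q^n)+O(\norm{\rho_\nu/W-1})$ using \eqref{62_2} and the sandwich estimate. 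Your chain of sandwich inequalities and the final weight swap are fine; if you either adopt the paper's decomposition or actually prove stability of the strong unicity constant under an $O(q^n)$ perturbation of the alternation nodes, your argument closes. As written, the claim $\gamma_{\rho_\nu}=O(n)$ is asserted rather than proved.
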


\begin{proof}
Put
\begin{equation}\label{LO}
    a_n = 1/ E_{n-1}(x^n;W) {\rm \ and \ } b_n = 1/E_{n-1}(x^n;\rho_\nu)
\end{equation}
that is, $M_n(x;W) = a_n x^n + ...$ and $M_n(x;\rho_\nu) = b_n x^n + ... .$
Using the extremal property of $M_n(\cdot;W)$ and $M_n(\cdot;\rho_\nu)$
we obtain that
$$ \frac{1}{a_n} \leq || \frac{M_n(x;\rho_\nu)}{W} || \leq \frac{1}{b_n} ||\frac{\rho_\nu}{W}||$$
and an analogous estimate for $M_n(x;W)/\rho_\nu$ yields
$$ 1/ ||\frac{\rho_\nu}{W}|| \leq \frac{a_n}{b_n} \leq ||\frac{W}{\rho_\nu}||$$
which implies using
$$ || \frac{W}{\rho_\nu} - 1 || \leq ||\frac{W}{\rho_\nu}|| ||\frac{\rho_\nu}{W} - 1||
   {\rm \ and \ } 1 \leq ||\frac{\rho_\nu}{W}|| ||\frac{W}{\rho_\nu}|| $$
that
\begin{equation}\label{L1}
    \left| \frac{a_n}{b_n} - 1 \right| \leq ||\frac{W}{\rho_\nu}|| ||\frac{\rho_\nu}{W} - 1||
\end{equation}
Let
$$ R_n(x;\rho_\nu) := R_n(x) = \frac{P_{n+l-1}}{\rho_\nu g_{(n)}} $$
be given by \eqref{ten}. Since $||M_n(x;W)/W|| = 1$ and by \eqref{62_2}
$$ || \frac{M_n(x;\rho_\nu)}{\rho_\nu} - R_n(x;\rho_\nu)|| = O(q^n) $$
it suffices to show that
$$ || \frac{M_n(x;W)}{\rho_\nu} - R_n(x;\rho_\nu) || = O(n)
   \left( O(|| 1 - \frac{\rho_\nu}{W} ||) + O(q^n) \right) $$
We may write
$$ M_n(x;W) - \frac{P_{n+l-1}(x)}{g_{(n)}(x)} = (1 - \frac{a_n}{b_n}) M_n(x;W) +
M_n(x;\rho_\nu) - \frac{P_{n+l-1}(x)}{g_{(n)}(x)} + h(x) $$
where
$$ h(x) = \frac{a_n}{b_n} M_n(x;W) - M_n(x;\rho_\nu) \in {\mathbb P}_{n-1} $$
Thus by \eqref{ten}
$$
\begin{aligned}
     & || - \frac{h}{\rho_\nu} || \leq \gamma(R_n) \left( ||R_n +
       \frac{h}{\rho_\nu}|| - ||R_n|| \right) \\
   = & \gamma(R_n) \left( || R_n - \frac{M_n(x,\rho_\nu)}{\rho_\nu} -
       (1 - \frac{a_n}{b_n}) \frac{M_n(x;W)}{\rho_\nu} +
       \frac{M_n(x;W)}{W} \frac{W}{\rho_\nu}|| - 1 \right) \\
   = & \gamma(R_n) \left( O(q^n) + O(||\frac{\rho_\nu}{W} - 1||) \right) \\
\end{aligned}
$$
which gives by \eqref{l2.2-t} the assertion.
\end{proof}

The following lemma is due to Achieser and Tomcuk \cite{ref6} and shows that
the RHS in \eqref{l2.2-t} tends to zero, if $W \in C^{1 + \alpha},$ $\alpha > 0.$

\begin{lemma}\label{lemma-I}
Let $W \in C^m(E)$ be positive on $E$ with
$\lim\limits_{n \to \infty} \omega_2(\frac{1}{n}) \log n = 0$ where $\omega_2$ denotes the modulus of
continuity of second order. Then there is a sequence of polynomials $\rho_\nu$ of degree $\nu,$
$\rho_\nu$ positive on $E,$ such that
\begin{equation}\label{II-1-t1}
    |\frac{\rho_\nu(x)}{W(x)} - 1| \leq \frac{const}{\nu^m} \omega_2(\frac{1}{\nu})
\end{equation}
and
\begin{equation}\label{II-1-t2}
    \int_E \log |\rho_\nu(\xi)| \frac{\partial \omega_k(\xi)}{\partial n^{+}_{\xi}} |d \xi| =
    \int_E \log |W(\xi)| \frac{\partial \omega_k(\xi)}{\partial n^{+}_{\xi}} |d\xi|
    {\rm \ for \ } k = 1,...,l-1
\end{equation}
\end{lemma}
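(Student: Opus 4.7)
The strategy is standard: combine a Jackson-type polynomial approximation of $W$ with a finite-dimensional correction that adjusts the $l-1$ linear constraints \eqref{II-1-t2}. First, since $E$ is a finite union of closed intervals and $W\in C^m(E)$ with $\omega_2(1/n)\log n\to 0$, the Jackson theorem in the several-interval form (obtainable e.g.\ via a polynomial T-mapping onto $[-1,1]$, or by a partition-of-unity argument with standard Jackson on each $E_k$ and then polynomial extension) produces, for every large $\mu$, a real polynomial $\widetilde\rho_\mu$ of degree $\mu$ satisfying
\begin{equation*}
\|\widetilde\rho_\mu - W\|_\infty \leq \frac{C}{\mu^m}\,\omega_2(1/\mu).
\end{equation*}
Because $W\geq c_0 > 0$ on the compact set $E$, for $\mu$ large enough $\widetilde\rho_\mu > 0$ on $E$ and $|\widetilde\rho_\mu/W - 1| = O(\omega_2(1/\mu)/\mu^m)$. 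The condition $\omega_2(1/n)\log n\to 0$ is what one needs to pass the same bound through $\log$ when computing the error in the integral conditions below.

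Next I would correct the integral conditions. Introduce the linear functionals $L_k(f):=\frac{1}{2\pi}\int_E f(\xi)\,\frac{\partial\omega_k(\xi)}{\partial n_\xi^+}\,|d\xi|$ for $k=1,\dots,l-1$, and set $\varepsilon_k:=L_k(\log W)-L_k(\log\widetilde\rho_\mu)$. From $|\log(1+t)|\leq 2|t|$ for small $|t|$ we get $|\varepsilon_k|=O(\omega_2(1/\mu)/\mu^m)$. Define
\begin{equation*}
\rho_\nu := \widetilde\rho_{\nu-(l-1)}\cdot s_\nu,\qquad s_\nu(x):=\prod_{j=1}^{l-1}\bigl(1+t_j q_j(x)\bigr),
\end{equation*}
where the $q_j$ are fixed polynomials (say degree one) and $(t_1,\dots,t_{l-1})\in\mathbb{R}^{l-1}$ are to be chosen so that \eqref{II-1-t2} holds exactly. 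Expanding the $\log$ in the correction equations gives the nonlinear system
\begin{equation*}
M\vec t = \vec\varepsilon + O(|\vec t|^2),\qquad M_{kj}:=L_k(q_j).
\end{equation*}
The crucial point is that the measures $\{\partial\omega_k/\partial n_\xi^+\,|d\xi|\}_{k=1}^{l-1}$ are linearly independent on $E$: a nontrivial linear combination vanishing would mean two distinct bounded harmonic functions on $\bar{\mathbb C}\setminus E$ having identical outward normal derivatives on $E$, which is impossible. Hence one may pick $q_1,\dots,q_{l-1}$ making $M$ nonsingular, and the implicit function theorem yields a unique small $\vec t = \vec t(\vec\varepsilon)$ with $|\vec t| = O(|\vec\varepsilon|) = O(\omega_2(1/\nu)/\nu^m)$.

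With this $\vec t$, for $\nu$ large enough $s_\nu>0$ on $E$ and $\|s_\nu - 1\|_\infty = O(|\vec t|)$. Therefore $\rho_\nu>0$ on $E$, $\deg\rho_\nu\leq\nu$, the identity \eqref{II-1-t2} holds by construction, and
\begin{equation*}
\left|\frac{\rho_\nu}{W}-1\right| \leq \left|\frac{\widetilde\rho_{\nu-(l-1)}}{W}-1\right|\|s_\nu\|_\infty + |s_\nu - 1| = O\!\left(\frac{\omega_2(1/\nu)}{\nu^m}\right),
\end{equation*}
which is \eqref{II-1-t1}. The main obstacle I expect is the nondegeneracy step, namely exhibiting explicit polynomials $q_j$ for which the matrix $M_{kj}=L_k(q_j)$ is invertible so that the correction $\vec t$ is indeed small as $\nu\to\infty$; once that linear algebra is secured, both conclusions follow from Jackson approximation plus the implicit function theorem, while the degree inflation by $l-1$ is absorbed by starting from $\widetilde\rho_{\nu-(l-1)}$.
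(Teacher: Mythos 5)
Your construction is correct in outline, but it takes a genuinely different route from the paper: the paper does not prove this lemma at all in the sense you attempt. It attributes the statement to Achieser and Tomcuk \cite{ref6}, and its entire ``proof'' consists of reconciling the side conditions: in \cite{ref6} the correction is imposed in the form of the moment conditions \eqref{II-2-t3}, $\int_E x^j\log(\rho_\nu/W)\,dx/h(x)=0$, and the paper observes via \eqref{pf-t2} that the span of the measures $x^j\,dx/h(x)$ (equivalently of the $\varphi_k^+$) coincides with the span of the measures $\partial\omega_k/\partial n_\xi^+\,|d\xi|$, so that \eqref{II-2-t3} and \eqref{II-1-t2} are the same $l-1$ linear constraints in a different basis. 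What your argument buys is a self-contained proof of the cited result: Jackson-type approximation plus a finite-dimensional correction handled by the implicit function theorem. The ``main obstacle'' you flag is in fact already dispatched by your own observation: once the $l-1$ measures $\partial\omega_k/\partial n_\xi^+\,|d\xi|$ are linearly independent (which is true, though your one-line justification should be tightened --- vanishing of $\sum c_k\,\partial\omega_k/\partial n^+$ forces $u=\sum c_k\omega_k$ to extend by reflection across ${\rm int}(E)$ with vanishing gradient there, hence $u\equiv{\rm const}=0$ by unique continuation, hence all $c_k=0$), density of polynomials in $C(E)$ guarantees the existence of $q_1,\dots,q_{l-1}$ with $\det(L_k(q_j))\neq 0$; no explicit choice is needed. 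Two small corrections: the hypothesis $\omega_2(1/n)\log n\to 0$ is not what lets you ``pass the bound through $\log$'' (that needs only $\|\rho_\nu/W-1\|\to 0$); it is used later, for the singular integral $\Psi$ in \eqref{tt1} and to beat the $O(n)$ factor in \eqref{l2.2-t}. And the comparison of moduli $\omega_2(1/(\nu-l+1))\asymp\omega_2(1/\nu)$ should be noted when you absorb the degree inflation.
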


\begin{proof}
In \cite{ref6} instead of relation \eqref{II-1-t2} the relation
\begin{equation}\label{II-2-t3}
    \int_E x^j \log \frac{\rho_\nu(x)}{W(x)} \frac{dx}{h(x)} = 0 \ \ \ j = 0,...,l-1
\end{equation}
is given which obviously is equivalent to
\begin{equation}\label{II-2-t4}
    \int_E \varphi_j^+ \log \frac{\rho_\nu(x)}{W(x)} = 0 \ \ \ j = 0,...,l-1
\end{equation}
Now \eqref{II-2-t3} follows by \eqref{pf-t2}.
\end{proof}

{\it \noindent Proof of the asymptotic representation \eqref{F2} of the minimal
polynomial \newline $M_n(x;W)$ on $E$.} Let $\rho_\nu$ be such a sequence of polynomials
whose existence is guaranteed by Lemma \ref{lemma-I}. Then in conjunction with
Lemma \ref{lemma-II} and Theorem \ref{thm1.6} it follows that for $\nu \geq \nu_0,$
uniformly on $E$
\begin{equation}\label{II-3-t0}
   2 \frac{M_n(x;W)}{W(x)} = {\mathcal R}_n^{+} (x;\rho_\nu) + {\mathcal R}_n^{-}
   (x;\rho_\nu) + o(1)
\end{equation}
where $o(1)$ is uniformly bounded with respect to $n$ and $\nu$ and
${\mathcal R}_n(z;\rho_\nu) := {\mathcal R}_n(z)$ is given by \eqref{t4}. First let
us demonstrate that for $\nu \geq \nu_0$ uniformly on $E$
\begin{equation}\label{II-3-t1}
   {\mathcal R}_n^{\pm} (x; {\rho}_{\nu}) =
   \frac{\phi^{\pm}(x;\infty)^n} {\prod\limits_{j=1}^{l-1}\phi^{\pm}(x,c_{j,n})}
   \sqrt{\frac{{\mathcal W}^{\pm}(x)}{{{\mathcal W}^{\mp}(x)} }}(1 + o(1)).
\end{equation}

Recall, see \cite{Wid}, that there exists a function $\Omega_\nu(z)$ which has
neither zeros nor poles, is analytic outside $E,$ and is such that $\Omega_\nu^{+}$
and $\Omega_\nu^{-}$ extend continuously to $E$ and satisfies for $\xi \in E$
\begin{equation}\label{II-3-t2}
   |\Omega_\nu(x)| = \sqrt{\Omega_\nu^{+}(x) \Omega_\nu^{-}(x)} = \rho_\nu(x) > 0
\end{equation}
Since
$$ f(z) = \frac{\rho_\nu(z)}{\omega_\nu(z)} \prod\limits_{j=1}^{\nu}
          \frac{\phi(z; w_{j,\nu})} {\phi(z; \infty)} $$
has neither zeros nor poles on $\bar{\mathbb C} \backslash E$ and
satisfies $|f^{\pm}| = 1$ on $E$ it follows as above that $\log |f|$ is a harmonic bounded
function on $\bar{\mathbb C} \backslash E$ which has a continuous extension to $E$ and
therefore $f \equiv 1,$ that is,
\begin{equation}\label{II-3-t3}
   \frac{\Omega_\nu(z)}{\rho_\nu(z)} = \prod\limits_{j=1}^{\nu} \frac{\phi(z;w_{j,\nu})}{\phi(z;\infty)}
\end{equation}

Note that
\begin{equation}\label{label-x}
 \sqrt{\frac{\Omega_{\nu}^{\pm}(x)}{\Omega_{\nu}^{\mp}(x)}} = \frac{\Omega_{\nu}^{\pm}(x)}{\rho_{\nu}(x)}
   = \prod\limits_{j=1}^{\nu} \frac{\phi^{\pm}(x;w_{j,\nu})}{\phi^{\pm}(x;\infty)}
\end{equation}

Next let us consider the function
\begin{equation}\label{II-4-4}
   I_\nu(z) = \exp \{ \frac{\sqrt{H(z)}}{2 \pi} \int_E \frac{1}{z-x} \log \frac{W(x)}{\rho_\nu(x)} \frac{dx}{h(x)}\}
\end{equation}
Because of \eqref{II-2-t3} $I_\nu(z)$ is analytic and nonzero on
$\bar{\mathbb C} \backslash E_l.$ Since $W/\rho_\nu \in C^{1 + \alpha}$
we may apply the Sochozki-Plemelj formula which yields that for $x \in E$
\begin{equation}\label{tt0}
   I_\nu^{\pm}(x) = \exp \{ \frac{1}{2} \log |\frac{W(x)}{\rho_\nu(x)}| \pm i \Psi(x) \},
\end{equation}
where
\begin{equation}\label{tt1}
   \Psi(x) = \frac{\sqrt{|H(x)|}}{2 \pi} \fpint{E}{} \frac{\log |W(t)/\rho_\nu(t)|}{x-t} \frac{dt}{\sqrt{|H(t)|}}.
\end{equation}
Moreover
\begin{equation}\label{II-4-5}
   |I_\nu(x)| = \sqrt{I_\nu^+(x) I_\nu^-(x)} = \frac{W(x)}{\rho_\nu(x)}
\end{equation}
Hence, recalling \eqref{II-3-t2}, the unique function ${\mathcal W}(z)$
with property \eqref{eq-x4}, introduced in the introduction, is given by
\begin{equation}\label{II-4-6}
   {\mathcal W}(z) = I_\nu(z) \Omega_\nu(z).
\end{equation}
Since by \eqref{II-1-t1}, \eqref{tt0} and \eqref{tt1}
\begin{equation}\label{II-4-7}
\begin{aligned}
   & I_\nu(z) \underset {\nu \to \infty} \longrightarrow 1
   {\rm \ uniformly \ on \ compact \ subsets \ of \ } \mathbb C \backslash E
   {\rm \ as \ well \ as \ } \\
   & I^{\pm}_\nu(x) \underset {\nu \to \infty} \longrightarrow 1 {\rm \ uniformly \ on \ } E\\
\end{aligned}
\end{equation}
it follows that
$$ {\mathcal W}^{\pm}(x) = I_\nu^{\pm}(x) \Omega_\nu^{\pm}(x) = \Omega_\nu^{\pm}(x) (1 + o(1))$$
hence
\begin{equation}\label{II-4-8}
  \sqrt{\frac{{\mathcal W}^{\pm}(x)}{{\mathcal W}^{\mp}(x)}} =
  \sqrt{\frac{\Omega^{\pm}_\nu(x)}{\Omega^{\mp}_\nu(x)}}(1+o(1))
\end{equation}
Thus by \eqref{t4} and \eqref{label-x} relation \eqref{II-3-t1} is proved.

Finally let us recall that because of \eqref{II-1-t1}
\begin{equation}\label{C1}
    c_{j,n}(\rho_\nu) = c_{j,n}(W)
\end{equation}
where the $c_{j,n}(\rho_\nu)$'s are the points \eqref{eq-xx3} and the $c_{j,n}(W)$'s
the points satisfying \eqref{F1}, using the fact that the associated Jacobi-inversion
problem is uniquely solvable. Since
\begin{equation}\label{702}
    \prod\limits_{j=1}^{l-1} \phi^{\pm}(x;c_{j,n}) =
    \prod\limits_{j=1}^{l-1} \phi^{\pm}(x;c_{j}) (1+o(1))
\end{equation}
we obtain by \eqref{II-3-t1}, \eqref{II-3-t0} and \eqref{eq-x4} the asymptotic
representation \eqref{F2} on $E.$
\qed
\newline

The asymptotic representation \eqref{F4} on $\bar{\mathbb C} \backslash E$
and the asymptotic value of the minimum deviation will be derived after the Lemma.

\begin{proof}[Proof of the asymptotic representation \eqref{F4} outside
of $E$:] Put $M_n(\cdot; W) = M_n,$ let $\tilde{M}_n$ be the polynomial
from \eqref{21-2} and set
\begin{equation}\label{II-6-x2}
    d_n = \frac{lc(M_n)}{lc(\tilde{M}_n)} = \frac{1}{lc(\tilde{M}_n)
    E_n(x^n;W)} = 1 + o(1)
\end{equation}
where the last equality follows by the fact, see \eqref{E11x2}, that
$$ \frac{1}{lc(\tilde{M}_n)} = E_{n-1}(x^n; \rho_\nu) + O(q^n) $$
and \eqref{LO} and \eqref{L1}.

Now let us consider
$$ \frac{M_n(z) - c_n \tilde{M}_n(z)}{\phi^n(z)} $$
Since $(M_n - d_n \tilde{M}_n)/\phi^n$ is a single valued function which
vanishes at $z=\infty$ we may apply Plemelj-Sochozki's formula and obtain,
$$
\begin{aligned}
     |\frac{(M_n - d_n \tilde{M}_n)(z)}{\phi^n(z)}|
      & = \left|\int_E \frac{(M_n - d_n \tilde{M}_n)(\xi)}{z-\xi}
        \left( \frac{1}{\phi^{+}(\xi)^n} - \frac{1}{\phi^{-}(\xi)^n}
        \right) d\xi \right| \\
      & = o(1) {\rm \ uniformly \ on \ compact \ subsets \ of \ }
        \bar{\mathbb C} \backslash E
\end{aligned}
$$
using the fact that by \eqref{II-3-t0} and \eqref{17.2}, \eqref{ten} and \eqref{M1}
$$ M_n(\xi) - d_n \tilde{M}_n(\xi) = o(1)  {\rm \ uniformly \ on \ } E.$$
Since $||\tilde{M}_n ||_E$ is bounded we know by the so-called Bernstein-Walsh Lemma
that $|\tilde{M}_n(z)| \leq const |\phi^n(z)|$ for
$z \in \bar{\mathbb C} \backslash E,$ hence
$$  \frac{M_n(z)}{\phi^n(z)} = \frac{\tilde{M}_n(z)}{\phi^n(z)} + o(1) =
    \frac{\Omega_\nu(z)}{\prod\limits_{j=1}^{l-1} \phi(c_{j,n};\infty)} + o(1)$$
which, by \eqref{II-4-6} and \eqref{II-4-7}, proves \eqref{F4}.
\end{proof}

\begin{proof}[Proof of Corollary \ref{cor1.3}:]
a) Since $1/ \prod\limits_{j=1}^{l-1} \phi(z;c_j)$ has a simple zero at
$c_j, j = 1,...,l-1,$ and is bounded from below on compact subsets of
$(a_{2j},a_{2j+1}) \backslash [c_j - \varepsilon, c_j + \varepsilon]$
it follows by \eqref{F4} that $M_n(x;1/W)$ has different sign on
$(a_{2j}+\varepsilon, c_j - \varepsilon)$ and
$(c_{j}+\varepsilon, a_{2j} - \varepsilon),$ $j=1,...,l-1,$ and thus at
least one zero in each $(c_{j} - \varepsilon, c_j + \varepsilon),$
and therefore exactly one zero. \\
b) The assertion follows by \eqref{t10}, \eqref{II-1-t2}, \eqref{C1},
\eqref{M1} and the relations $$ \sharp Z(P_{n+l-1},E_k) = \sharp
Z(\tilde{M}_n,E_k) = \sharp Z(M_n,E_k) $$ recalling the fact that at
the boundary points $|\tilde{M}_n/\rho_\nu|$ tends to one.
\end{proof}

\begin{proof}[Proof of Theorem \ref{thm2}]
The equivalence of statement a) and b) follows by Lemma \ref{lemma2.3} b)
and the equivalence of a) and c) by Corollary \ref{cor1.3}.
\end{proof}

\section{Proof of Theorem \ref{insertion}}

\begin{lemma}
Let $P_n$ be orthonormal on $E$ with respect to the weight function
$R/h \rho_{\nu} = R/ \sqrt{- H} \rho_\nu,$ $R / h \rho_\nu > 0$ on ${\rm int}(E)$
and $\rho_\nu > 0$ on $E$ and assume for simplicity of writing that the zeros
$w_j$ of $\rho_\nu$ are simple. Then the following relation holds
\begin{equation}\label{eq-e1t0}
   R P^2_n - S Q^2_m = 2 \rho_\nu g_{(n)}
\end{equation}
with
\begin{equation*}
   (R P_n)(w_j) = (\sqrt{H} Q_m)(w_j),
\end{equation*}
\begin{equation*}
   g_{(n)}(x) = \prod_{j = 1}^{l - 1} (x - x_{j,n}), {\rm \ where \ }
   x_{j,n} \in [a_{2 j},a_{2 j + 1}] {\rm \ for \ } j = 1,..., l - 1
\end{equation*}
and
\begin{equation*}
   R P_n (x_{j,n}) = \delta_{j,n} \sqrt{H} Q_m (x_{j,n})
\end{equation*}
where $\delta_{j,n} \in \{ \pm 1 \}.$ Furthermore putting
\begin{equation}
   {\mathcal R}_1 = \frac{R P^2_n}{\rho_\nu g_{(n)}} - 1
\end{equation}
and
\begin{equation}
    \sqrt{H} {\mathcal R}_2 = \frac{\sqrt{H} Q_m P_n}{ \rho_\nu g_{(n)} }
\end{equation}
we obtain
\begin{equation*}
   {\mathcal R}_1^2 - H {\mathcal R}_2^2 = 1
\end{equation*}
\begin{equation*}
   {\mathcal R}_1(x_{j,n}) = \delta_{j,n} \sqrt{H} {\mathcal R}_2(x_{j,n})
\end{equation*}
and
\begin{equation*}
   {\mathcal R}_1(w_j) = \sqrt{H} {\mathcal R}_2(w_j)
\end{equation*}
and there holds for $n \geq n_0,$ $k = 1,...,l- 1$
\begin{equation}\label{eq-inst1}
\begin{aligned}
   \sum_{j = 1}^{l - 1} \int_{\kappa^{*}_{j,n}}^{\kappa_{j,n}} \varphi_k
      & = \sum\limits_{j = 1}^{l - 1} \delta_{j,n} \int_{x_{j,n}^{-}}^{x_{j,n}^{+}} \varphi_k
        = - (2 n + 1 + \partial R - (\nu + l)) \int_{{\infty}^{-}}^{{\infty}^{+}} \varphi_k \\
      & - \sum\limits_{j = 1}^{\nu} \int_{w_j^{-}}^{w_j^{+}} \varphi_k
        + \sum_{j = 1}^{l - 1} ( 2 \sharp Z(P_n, E_j) + \sharp Z(R, E_j)) B_{k j}
\end{aligned}
\end{equation}
where ${\rm pr}(\kappa_{j,n}) = x_{j,n}$ and $\kappa_{j,n} \in {\mathcal R}^{\delta_{j,n}}.$

The $L_2$-minimum deviation is given by
\begin{equation}\label{eq-inst2}
\begin{aligned}
   \left( \int p^2_n \right) lc(\rho_\nu) = 2 (cap \ E)^{2 n + \partial R - (l - 1) - \nu}
   \frac{\prod_{j = 1}^{l - 1} \phi(\infty; x_{j,n})^{- \delta_{j,n}} }
        {\prod_{j = 1}^{\nu} \phi(\infty; w_j)}
   + O(q^n),
\end{aligned}
\end{equation}
where $0 < q < 1.$
\end{lemma}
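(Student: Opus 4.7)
The plan is to mimic the construction used in the proof of Theorem \ref{thm1.6}, but now with the orthogonality of $P_n$ playing the role that equioscillation played there. The orthogonality of $P_n$ with respect to the weight $R/(h\rho_\nu)$ will be converted into a divisor relation on the hyperelliptic surface $y^2=H$, from which the Jacobi-inversion equation \eqref{eq-inst1} follows by Abel's theorem, and the $L_2$ minimum deviation \eqref{eq-inst2} follows by comparing leading coefficients.

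First I would derive the quadratic identity. Define $Q_m$ as the polynomial part of $P_n(z)\int_E \frac{R(t)}{(z-t)h(t)\rho_\nu(t)}\,dt$, or equivalently take $Q_m$ to be the unique polynomial of the appropriate degree such that $(RP_n-\sqrt{H}Q_m)/\rho_\nu$ vanishes at $\infty$ to the required order. Orthogonality $\int_E P_n(t)t^k R(t)/(h(t)\rho_\nu(t))\,dt=0$ for $k<n$ forces the product $(RP_n+\sqrt{H}Q_m)(RP_n-\sqrt{H}Q_m) = R^2 P_n^2 - H Q_m^2$ to be a polynomial divisible by $\rho_\nu$ and of low degree, which pins it down as $2R\rho_\nu g_{(n)}$ (with $g_{(n)}$ defined as the product of the nodes $x_{j,n}$ arising as zeros in the gaps). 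This gives the asserted identity after dividing by $R$, and simultaneously produces the interpolation conditions $(RP_n)(w_j)=(\sqrt{H}Q_m)(w_j)$ (from divisibility by $\rho_\nu$) and $(RP_n)(x_{j,n})=\delta_{j,n}(\sqrt{H}Q_m)(x_{j,n})$ with $\delta_{j,n}\in\{\pm1\}.$

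Next I would define ${\mathcal R}_n = {\mathcal R}_1+\sqrt{H}{\mathcal R}_2$ with ${\mathcal R}_1,{\mathcal R}_2$ as in the statement. The identity ${\mathcal R}_1^2-H{\mathcal R}_2^2=1$ is then an algebraic consequence of the quadratic identity of Step~1 together with the normalization $RP_n^2-H Q_m^2/R = 2\rho_\nu g_{(n)}$. Regarded as a function on $\mathfrak R$, ${\mathcal R}_n$ has, by the interpolation conditions, zeros at $w_j^+$ and at $\kappa_{j,n}=x_{j,n}^{\delta_{j,n}}$ and corresponding poles at $w_j^-$ and $\kappa^*_{j,n}$; the remaining divisor is accounted for by the pole of order $2n+\partial R-(\nu+l)$ at $\infty^+$ and the matching zero at $\infty^-$, together with double zeros of ${\mathcal R}_n-1$ or ${\mathcal R}_n+1$ at each zero of $P_n$ on $E$ (these contribute via the $\sharp Z(P_n,E_j)$ terms when we integrate $d\ln {\mathcal R}_n$ over $\alpha$-cycles as in the proof of Theorem \ref{thm1.6}). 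Applying Abel's theorem to this divisor — equivalently, integrating $d\ln {\mathcal R}_n$ along the $\beta_k$-cycles and using the bilinear relation \eqref{bil} exactly as in the derivation of \eqref{t10} — yields \eqref{eq-inst1}.

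Finally, for the $L_2$-deviation \eqref{eq-inst2}, I would read off the leading coefficient of $P_n$ by expanding ${\mathcal R}_n$ at $\infty^+$. From the explicit product representation of ${\mathcal R}_n$ in terms of complex Green's functions (analogous to \eqref{t4}, with factors $\phi(z;w_j)$, $\phi(z;\infty)$ and $\phi(z;x_{j,n})^{-\delta_{j,n}}$), the coefficient of the leading term at $\infty^+$ becomes, after using $\mathrm{cap}(E)=\lim |z/\phi(z;\infty)|$, exactly the right-hand side of \eqref{eq-inst2} up to the normalization factor $(\int P_n^2)\,lc(\rho_\nu)$, with the $O(q^n)$ error coming as in Theorem \ref{thm1.6} from the fact that $1/{\mathcal R}_n^+$ on $E$ is geometrically small in the relevant direction. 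The main obstacle I anticipate is bookkeeping: correctly identifying the multiplicities in the divisor of ${\mathcal R}_n$ (especially the contributions from the double zeros coming from $P_n^2$ on the branch cuts and from $R$ vanishing at endpoints $a_{2j}$ or $a_{2j+1}$), and ensuring the signs $\delta_{j,n}$ are consistently tracked through both \eqref{eq-inst1} and \eqref{eq-inst2}. Once the divisor is correct, the remaining steps are routine applications of Abel's theorem and the bilinear relations, together with the Bernstein-Walsh-type asymptotic control used earlier.
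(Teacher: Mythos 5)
Your plan is correct and is in substance the same argument the paper relies on: the paper disposes of the quadratic identity, the interpolation conditions and \eqref{eq-inst1} by citing \cite{PehSIAM} and \cite[Lemmas 2.3, 3.1]{PehIMNR}, and those cited results are established exactly by the Bernstein--Szeg\H{o}/divisor construction you describe (which is also the template of the proof of Theorem \ref{thm1.6} in this paper); the only step the paper carries out explicitly is the final leading-coefficient expansion of $\psi_n$ at $\infty$ giving \eqref{eq-inst2}, and your last paragraph reproduces that computation. The one point to keep straight in your bookkeeping is that the identity of the statement reads $(RP_n)^2-HQ_m^2=2R\rho_\nu g_{(n)}$ with $S=H/R$, which is what makes ${\mathcal R}_1^2-H{\mathcal R}_2^2=1$ come out exactly, as you note.
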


\begin{proof}
The first statements follow by \cite{PehSIAM} and relation \eqref{eq-inst1}
follows by \cite[Lemma 3.1]{PehIMNR}. It has been shown in \cite[Lemma 2.3]{PehIMNR}
that
\begin{equation*}
    {\mathcal R}_1 = \frac{1}{2} \left( \psi_n + \frac{1}{\psi_n} \right) {\rm \ and \ }
    \sqrt{H} {\mathcal R}_2 = \frac{1}{2} \left( \psi_n - \frac{1}{\psi_n} \right)
\end{equation*}
where
\begin{equation*}
    \psi_n(z) = \phi(z,\infty)^{2 n + 1 + \partial R - (\nu + l)}
    \prod_{j = 1}^{\nu} \phi(z;w_j)
    \prod_{j = 1}^{l - 1} \phi(z,x_{j,n})^{\delta_{j,n}}
\end{equation*}
Thus we obtain by \eqref{eq-inst1} that
\begin{equation*}
\begin{aligned}
    & 1/ ( \int p^2_n ) lc(\rho_\nu) =
      \lim_{z \to \infty} \frac{1}{z^{2 n + 1 + \partial R - (\nu + l)}}
      \left( \frac{R P^2_n}{\rho_\nu g_{(n)}} - 1 \right) \\
  = & \lim_{z \to \infty} \frac{1}{z^{2 n + 1 + \partial R - (\nu + l)}}
      \frac{1}{2} \left( \psi_n + \frac{1}{\psi_n} \right) \\
  = & \frac{1}{2} (cap \ E)^{-(2 n + 1 + \partial R - (\nu + l))}
      \prod_{j = 1}^\nu \phi(z,w_j)
      \prod \phi(z,x_{j,n})^{\delta_{j,n}} + o(q^n),
\end{aligned}
\end{equation*}
$0 < q < 1,$ where we used the fact that
$\lim_{z \to \infty} 1/(\psi_n z^{2 n + 1 + \partial R - (\nu + l)}) = o(q^n).$
\end{proof}

\begin{lemma}\label{lem-new}
For every $\tilde{R}, R \in {\mathcal E},$ respectively, $\tilde{R}, R \in {\mathcal E}^{(1-x)}$
the solutions of \eqref{eq-inst1} satisfy
\begin{equation}\label{eq-zero}
    x_{j,n}(R) = pr(\kappa_{j,n}(R)) = pr (\kappa_{j,n}(\tilde{R})) = x_{j,n}(\tilde{R}) \ j = 1,...,l-1
\end{equation}
if ${\boldsymbol \kappa}_n(R),$ $ {\boldsymbol \kappa}_n(\tilde{R})$ $ \in $
${\sf{X}}_{j = 1}^{l - 1} (a_{2 j},a_{2 j + 1})^{-} \cup (a_{2 j},a_{2 j + 1})^{+}.$
In particular
\begin{equation*}
     g_{(n)}(x; R) = g_{(n)}(x; \tilde{R})
\end{equation*}
\end{lemma}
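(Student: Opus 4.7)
The approach is to compare the two instances of \eqref{eq-inst1}, one for $R$ and one for $\tilde R$, and to recognize that their right-hand sides can differ only by an element of the period lattice, so that the Abel-map bijectivity recalled just before Lemma \ref{lemma2.2} forces the solutions to coincide. Since $R$ and $\tilde R$ belong to the same class---both in $\mathcal{E}$, so $\partial R = \partial\tilde R = l-1$, or both in $\mathcal{E}^{(1-x)}$, so $\partial R = \partial\tilde R = l$---and the zeros $w_j$ of $\rho_\nu$ are common, the $\int_{\infty^-}^{\infty^+}\varphi_k$ and $\int_{w_j^-}^{w_j^+}\varphi_k$ contributions cancel upon subtraction. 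What remains is
\begin{equation*}
    \sum_{j=1}^{l-1} \int_{\kappa_{j,n}(R)^*}^{\kappa_{j,n}(R)} \varphi_k
    - \sum_{j=1}^{l-1} \int_{\kappa_{j,n}(\tilde R)^*}^{\kappa_{j,n}(\tilde R)} \varphi_k
    = \sum_{j=1}^{l-1} M_j\, B_{k j},
\end{equation*}
with $M_j := 2\bigl(\sharp Z(P_n(R),E_j) - \sharp Z(P_n(\tilde R),E_j)\bigr) + \bigl(\sharp Z(R,E_j) - \sharp Z(\tilde R,E_j)\bigr) \in \Z$.

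The key step is to verify that each $M_j$ is even. Because $2\sharp Z(P_n)$ is automatically even, this reduces to proving $\sharp Z(R,E_j) \equiv \sharp Z(\tilde R,E_j)$ modulo $2$ for every $j$. I would establish this parity match by interpreting, in the spirit of Lemma \ref{lemma2.2}, the residues $\sharp Z(R,E_j) \bmod 2$ as the half-period shifts $\sigma_j$ attached to a Jacobi-inversion problem whose solution is constrained to the Cartesian product of gap loops: once the sheet labels $\delta_{j,n}(R)$ of ${\boldsymbol \kappa}_n(R)$ are fixed, the corresponding $\sigma_j$'s are uniquely determined by the uniqueness clause of Lemma \ref{lemma2.2}, and the hypothesis that both ${\boldsymbol \kappa}_n(R)$ and ${\boldsymbol \kappa}_n(\tilde R)$ lie in the open part of ${\sf X}_{j=1}^{l-1}\bigl((a_{2j},a_{2j+1})^+ \cup (a_{2j},a_{2j+1})^-\bigr)$ rules out any collapse to a branch point where the identification would degenerate.

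With $M_j$ even throughout, the right-hand side of the displayed identity lies in $2B\Z^{l-1}$, so $\mathcal{A}({\boldsymbol \kappa}_n(R))$ and $\mathcal{A}({\boldsymbol \kappa}_n(\tilde R))$ represent the same class in $\mathrm{Jac}\,\mathcal{R}/\R$. The bijectivity of the restricted Abel map on ${\sf X}_{j=1}^{l-1}\bigl([a_{2j},a_{2j+1}]^+ \cup [a_{2j},a_{2j+1}]^-\bigr)$ then yields ${\boldsymbol \kappa}_n(R) = {\boldsymbol \kappa}_n(\tilde R)$ as ordered tuples; projecting each component to $\R$ gives $x_{j,n}(R) = x_{j,n}(\tilde R)$ for $j=1,\ldots,l-1$, whence $g_{(n)}(x;R) = g_{(n)}(x;\tilde R)$. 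The main obstacle is the parity verification in the middle paragraph; the remaining reduction via the Abel-map bijectivity is routine.
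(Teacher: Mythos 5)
Your argument hinges on the claim that $\sharp Z(R,E_j)\equiv\sharp Z(\tilde R,E_j)$ modulo $2$ for every $j$, so that the two right-hand sides of \eqref{eq-inst1} differ by a \emph{full} period and the Abel map forces ${\boldsymbol\kappa}_n(R)={\boldsymbol\kappa}_n(\tilde R)$ as points on the surface. This parity claim is false in general, and with it the whole route collapses. By definition an element of $\mathcal E$ places one zero at either end of each gap; already for $l=2$ the two elements of $\mathcal E$ are $R(x)=x-a_2$ and $\tilde R(x)=x-a_3$, giving $\sharp Z(R,E_1)=1$ and $\sharp Z(\tilde R,E_1)=0$, which differ by an odd number. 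So the right-hand sides of \eqref{eq-inst1} for $R$ and $\tilde R$ genuinely differ by \emph{half}-periods $\sum_j \sigma_j B_{kj}/2$ with some $\sigma_j=1$, not by lattice elements, and one cannot conclude that the solutions coincide on $\mathcal R$. Indeed they do not: in the proof of Theorem \ref{insertion} the sheet labels $\delta_{j,n}$ are maximized over, precisely because different $R$'s put the $\kappa_{j,n}$'s on different sheets.

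The statement to be proved is correspondingly weaker than what you aim at: \eqref{eq-zero} asserts only that the \emph{projections} $pr(\kappa_{j,n}(R))$ and $pr(\kappa_{j,n}(\tilde R))$ agree, not the points themselves. The paper's argument is exactly the mechanism you would need but do not invoke: as shown in the proof of Lemma \ref{lemma2.2}, adding a half-period $(B')_p$ to the data of the real Jacobi inversion problem leaves every solution point $\xi_j$, $j\neq p$, in place and replaces $\xi_p$ by its involution $\xi_p^{*}$ (provided $\xi_p$ is not a branch point, which is guaranteed here by the hypothesis that both solution tuples lie over the \emph{open} gaps). Hence a half-period shift changes only the sheet, never the projection, and $x_{j,n}(R)=x_{j,n}(\tilde R)$ follows, giving $g_{(n)}(\cdot;R)=g_{(n)}(\cdot;\tilde R)$. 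Your final paragraph, which derives equality of the ordered tuples from bijectivity of the Abel map, proves something stronger than the lemma and stronger than what is true.
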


\begin{proof}
Since $\sum_{j = 1}^{l - 1} \sharp Z(R,E_j) B_{k j}/2$ and
$\sum_{j = 1}^{l - 1} \sharp Z(\tilde{R},E_j) B_{k j}/2$
differ modulo $2$ by half-periods only statement \eqref{eq-zero}
follows, see the proof of Lemma \ref{lemma2.2}, since the solutions of the
Jacobi inversion problem \eqref{eq-inst1} with respect to
$R$ and $\tilde{R}$ differ with respect to the sheet only.
\end{proof}

\begin{proof}[Proof of Theorem \ref{insertion}]
a) For ${\boldsymbol \sigma}_n := {\boldsymbol \sigma}_n(1/\rho_\nu)$
given by \eqref{insert_eq} there exists a $R({\boldsymbol \sigma}_n)$ such that
\begin{equation*}
   {\boldsymbol \sigma}_n = \sharp Z(R ({\boldsymbol \sigma}_n)) {\rm \ mod \ } 2
\end{equation*}
Now by the uniqueness of the real Jacobi inversion problem it follows that
\begin{equation*}
   {\boldsymbol c}_n = {\boldsymbol \kappa}_n(\sigma_n)
\end{equation*}
that is,
\begin{equation}\label{eq-t}
   c_{j,n} = x_{j,n} {\rm \ and \ } -1 = \delta_{j,n} \ \ j = 1,...,l-1
\end{equation}
where ${\boldsymbol \kappa}_n (\sigma_n)$ are the solutions from \eqref{eq-inst1}
and ${\boldsymbol c}_n$ the solution from \eqref{F1}. By Lemma \ref{lem-new} and
$\Phi(\infty,x) > 1$ for $x \notin E$ it follows that the RHS
takes its maximum for $\delta_{j,n} = -1,$ $j = 1,..., l-1,$ hence by \eqref{eq-t}
for $R(\sigma_n).$ Lemma \ref{lem-new} and Corollary \ref{} yield
\begin{equation*}
      \frac{1}{2} \left( \int p_n^2 \frac{R(\sigma_n)}{\hat{\rho}_\nu h} \right) =
      \frac{1}{2} E_{n - 1,2} (x^n; R(\sigma_n)/\hat{\rho}_\nu h)
  =   E_{2 n - 1, \infty}(x^{2 n}; 1/\hat{\rho}_\nu)
\end{equation*}
where $\hat{}$ means monic.

Now by the assumptions on the weight function $W,$ see e.g. \cite{ref6}, there
is a sequence of $\rho_\nu$'s positive on $E$ such that on $E$
\begin{equation*}
   \left| \frac{\rho_\nu(x)}{W(x)} - 1 \right| < \frac{const}{\nu} \omega_2\left( \frac{1}{\nu} \right)
\end{equation*}
which implies, as we have demonstrated above,
\begin{equation*}
   E_{n - 1, \infty}(x^n,1/\rho_\nu) = E_{n - 1, \infty}(x^n, W)(1 + o(1))
\end{equation*}
and, see \cite{Wid} or \cite{ref6}, that
\begin{equation}
   E_{n - 1,2}(x^n; W R(\sigma_n)/h) = E_{n - 1,2}(x^n; R(\sigma_n)/ h \rho_\nu)
\end{equation}
which gives the assertion.

b) Replacing $R(\sigma_n)$ from a) by $((1-x)R)(\sigma_n)$
the assertion follows analogously.
\end{proof}

It can be shown quite similarly as in the second part of the proof of Theorem \ref{thm1.6}
that the normalized minimal polynomial $M_{2 n}(\cdot; 1/\rho_\nu)$ with
$|| M_{2 n}(\cdot; 1/\rho_\nu) || = 1$ is given asymptotically by the orthogonal
polynomial as follows
\begin{equation*}
\begin{aligned}
     & \frac{R(x; \sigma_n) P^2_n(x; R(\sigma_n)/\rho_\nu h) - \rho_\nu(x) g_{(n)}(x;\sigma_n)}
            {\rho_\nu g_{(n)}(x;\sigma_n)} \\
   = & \frac{M_{2 n}(x;1/\rho_\nu) g_{(n)}(x,\sigma_n) + q(x)}{\rho_\nu(x) g_{(n)}(x;\sigma_n)}  \\
   = & \frac{M_{2 n}(x;1/\rho_\nu)}{\rho_\nu(x)} + O(r^n),
\end{aligned}
\end{equation*}
where $0 < r < 1,$ uniformly on compact subsets of ${\mathbb C} \backslash \{ c_1,...,c_{l-1} \}$
where $c_1,...,c_{l-1}$ are the accumulation points of zeros of $g_{(n)}(x,\sigma_n).$ For odd
$n$'s the assertion holds analogously.

\begin{proof}[Proof of Corollary \ref{cor1.7} ]
By \eqref{eq-sec2-x1} and \eqref{eq-sec2-x2}
$$ d \ln \phi(z;c) = - \sum\limits_{k=1}^{l-1} \omega_k(c) \varphi_k + \eta(z; c^{+}, c^{-}),$$
hence by \eqref{eq-xx5}
$$
\begin{aligned}
  \frac{\psi_n(z)}{{\mathcal W}(z)}
  & = \exp \{ \sum\limits_{k=1}^{l-1} ( - n \omega_k(\infty) + \sum\limits_{j=1}^{l-1} \omega_k(c_j) ) \int_a^z \varphi_k \}\\
  & \quad . \frac{ ( \exp  \int \eta(z; \infty^{+}, \infty^{-}) )^n }{ \prod\limits_{j=1}^{l-1} e^{\int \eta(z;c_j^{+},c_j^{-})} }
\end{aligned}
$$
Note that the first factor can be written by \eqref{F1} in terms
of integrals of $\log W.$ Now
$$ \prod\limits_{j=1}^{l-1} e^{\int \eta(z;c_j^{+},c_j^{-})} =
   \frac{ \vartheta (z; \sum_j \int_{a_1}^{c_{j,n}^{+}} \varphi_k) }
        { \vartheta (z; \sum_j \int_{a_1}^{c_{j,n}^{-}} \varphi_k) }$$
since the functions at the LHS and RHS have the same zeros and poles
and the same $\beta-$periods and coincide at the point $a_1.$ Analogously
the representation for $e^{\int \eta (z,\infty^{+},\infty^{-})}$ follows,
using \eqref{F1} with $W \equiv 1,$ or by taking a look at \cite[Proposition 2.1]{FalSeb}.
The assertion follows by \eqref{F1}.
\end{proof}

\end{document}